\documentclass[12pt]{amsart}
\usepackage{amssymb,amstext,amsfonts,amsthm}
\usepackage{color}
\usepackage[mathscr]{euscript}
\usepackage{color}
\setlength{\topmargin}{-1.0cm} \setlength{\textwidth}{16cm}
\setlength{\textheight}{23.5cm} \setlength{\oddsidemargin}{0cm}
\setlength{\evensidemargin}{0cm} \setlength{\footskip}{1cm}
\setlength{\headsep}{0.8cm}


\makeatletter
\@addtoreset{equation}{section}

\makeatother
\catcode`\@=11
\newcommand\lsection{\@startsection {section}{1}{\z@}%
                                   {-3.5ex \@plus -1ex \@minus -.2ex}%
                                   {1.0ex \@plus.2ex}%
                                   {\normalfont\large\bfseries}}
\catcode`\@=12
\newtheorem{thm}{Theorem}[section]
\newtheorem{cor}[thm]{Corollary}
\newtheorem{lem}[thm]{Lemma}
\newtheorem{prop}[thm]{Proposition}

\theoremstyle{definition}
\newtheorem{ex}[thm]{Example}
\newtheorem{defn}[thm]{Definition}
\newtheorem{rem}[thm]{Remark}

\newtheorem{ques}[thm]{Question}
\newcommand{\pd}[2]{\frac{\partial#1}{\partial#2}}

\newcommand{\lct}{\mathop{\mathrm{lct}}\nolimits}
\newcommand{\ord}{\mathop{\mathrm{ord}}\nolimits}
\newcommand{\GL}{\mathop{\mathrm{GL}}\nolimits}
\newcommand{\C}{{\mathbb{C}}}
\newcommand{\R}{{\mathbb{R}}}
\newcommand{\Q}{{\mathbb{Q}}}
\newcommand{\Z}{{\mathbb{Z}}}
\newcommand{\V}{{\mathrm{V}}}
\newcommand{\supp}{{\mathrm{supp}}}
\newcommand{\m}{{\mathbf{m}}}
\newcommand{\LL}{{\mathcal{L}}}
\def\k{\mathit k}
\def\geq{\geqslant}
\def\leq{\leqslant}
\renewcommand{\O}{{\mathcal{O}}}
\newcommand{\comp}{\raisebox{0.1ex}{\hspace{0ex}
{$\scriptstyle{\circ}$}\hspace{0.5ex}}}         
\def\*{\color{green}\blacksquare\hspace{-1mm}\blacksquare\hspace{-1mm}\blacksquare}
\allowdisplaybreaks[4]
\selectfont


\begin{document}
\title[Mixed \L ojasiewicz exponents and log canonical thresholds]
{Mixed \L ojasiewicz exponents, log canonical \\ \vskip4pt thresholds
of ideals and bi-Lipschitz equivalence}

\author{Carles Bivi\`a-Ausina}
\address{
Institut Universitari de Matem\`atica Pura i Aplicada,
Universitat Polit\`ecnica de Val\`encia,
Cam\'i de Vera, s/n,
46022 Val\`encia, Spain}
\email{carbivia@mat.upv.es}

\author{Toshizumi Fukui}
\address{Department of Mathematics,
Saitama University,
255 Shimo-Okubo, Sakura-ku
Saitama 338-8570, Japan}
\email{tfukui@rimath.saitama-u.ac.jp}

\keywords{{\L}ojasiewicz exponents,
integral closure of ideals,
mixed multiplicities of ideals, monomial ideals,
log canonical threshold, bi-Lipschitz equivalence}

\thanks{The first author was partially supported by DGICYT
Grant MTM2009--08933.}


\begin{abstract}
We study the \L ojasiewicz exponent and the log canonical threshold of ideals of $\O_n$
when restricted to generic subspaces of $\C^n$ of different dimensions. We obtain effective formulas
of the resulting numbers for ideals with monomial integral closure. An inequality relating these
numbers is also proven. We also introduce the notion of bi-Lipschitz equivalence of ideals and we
prove the bi-Lipschitz invariance of \L ojasiewicz exponents and log canonical thresholds of ideals.
\end{abstract}

\maketitle


\lsection{Introduction}


In 1970, O.\,Zariski posed in \cite[p.\,483]{Zariski} the following celebrated question:
\begin{center}
\begin{minipage}{140mm}
Let $f$ and $g$ be two holomorphic function germs $(\C^n,0)\to(\C,0)$.
If there is a homeomorhism $\varphi:(\C^n,0)\to(\C^n,0)$
so that $\varphi(f^{-1}(0))=g^{-1}(0)$, then
do the germs $f$ and $g$ have the same multiplicity?
\end{minipage}
\end{center}
This question is still unsolved except for the case $n=2$
and is known as the {\it Zariski's multiplicity conjecture}
(see the survey \cite{Eyral}).
One of the main difficulties to attack this question comes from the fact
that the image of a line by a homeomorphism $\varphi:(\C^n,0)\to (\C^n,0)$ may not carry
any algebraic (or analytic) structure.

Let $\O_n$ denote the ring of complex analytic function germs $(\C^n,0)\to \C$ and let $\m_n$ denote
the maximal ideal of $\O_n$. We recall that the {\it multiplicity}
or {\it order} of $f$ is defined as the maximum of those $r\in\Z_{\geq 1}$ such that $f\in \m_n^r$.


Let $f\in\O_n$ such that $f$ has an isolated singularity at the origin. In his famous book \cite{Milnor},
J.\,Milnor showed several topological interpretations of the number
$$
\mu(f)=\dim_{\C}\O_n/\langle\textstyle{
\pd{f}{x_1},\dots,\pd{f}{x_n}}\rangle,
$$
which is usually known as the Milnor number of $f$.
Zariski's multiplicity conjecture and Milnor's book have been
some of the most important motivations of many researchers
to explore the relations between invariants of different nature
(topological, analytic or algebraic) of a given singular function germ $f\in\O_n$, or more generally,
of complete intersection singularities.

B.\,Teissier introduced in \cite[p.\,300]{Cargese} the sequence of Milnor numbers
$$
\mu^*(f)=\left(\mu^{(n)}(f),\mu^{(n-1)}(f),\dots,\mu^{(1)}(f)\right)
$$
where $\mu^{(i)}(f)$ denotes the Milnor number of the restriction of $f$ to a generic linear $i$-dimensional
subspace of $\C^n$, for $i\in\{0,1,\dots, n\}$. In particular $\mu^{(1)}(f)=\ord(f)-1$ and $\mu^{(n)}(f)=\mu(f)$.
By the results of Teissier \cite[p.\,334]{Cargese} and Brian\c con-Speder \cite[p.\,159]{BS2} we know that,
if $f_t:(\C^n,0)\to (\C,0)$ denotes an analytic family of function germs such that
$f_t$ have simultaneously isolated singularities at $0$, then the constancy of $\mu^*(f_t)$ is equivalent to the
Whitney equisingularity of the deformation $f_t$.
In \cite[1.7]{Teissier77}, Teissier
also obtained a relation between the set of polar multiplicities of a given function germ $f\in\O_n$ with the
\L ojasiewicz exponent $\LL_0(\nabla f)$. The number $\LL_0(\nabla f)$ is defined as the infimum of those
$\alpha\in\R_{\geq 0}$ for which there exists a positive constant $C>0$ and an open neighbourhood $U$ of $0\in\C^n$
such that
$$
\|x\|^\alpha\leq C\|\nabla f(x)\|
$$
for all $x\in U$, where $\nabla f$ denotes the gradient map $(\pd{f}{x_1},\dots,\pd{f}{x_n})$ of $f$.
Teissier also asked in \cite[p.\,287]{Teissier77} whether $\LL_0(\nabla f_t)$ remains constant
in $\mu$-constant analytic deformations $f_t:(\C^n,0)\to (\C,0)$. There is still no general answer to this question.
However as a consequence of \cite[1.7]{Teissier77} and \cite[Th\'eor\`eme\,6]{Teissier77} it follows that, if $f_t:(\C^n,0)\to (\C,0)$
denotes a $\mu^*$-constant analytic deformation, then $\LL_0(\nabla f_t)$ is also constant.


The research of such invariants is motivated not only to understand the topology of hypersurfaces and singular varieties
in general but also to understand the behaviour of functions and maps.
In \cite{Mather}, J.\,Mather introduced the language to investigate singularities of maps and functions.
This language has been widely-accepted and studied (see for instance the survey of C.\,T.\,C.\,Wall \cite{Wall}).
J.\,Mather defined the notions of right equivalence, right-left equivalence and
contact equivalence for map germs. The corresponding equivalence classes are the orbits of the action of the groups
$\mathcal R$, $\mathcal A$ and $\mathcal K$ respectively, where
\begin{itemize}
\item $\mathcal R$ is the group of diffeomorphism germs of the source,
\item $\mathcal A$ is the direct product of the group of diffeomorphism germs of the source and the target,
\item $\mathcal K$ is the group that is formed by the elements
$(\varphi(x),\phi_x(y))$ so that
\begin{itemize}
\item[{\scriptsize $\bullet$}]
$x\mapsto\varphi(x)$ is a diffeomorphism germ of the source, and
\item[{\scriptsize $\bullet$}]
$y\mapsto\phi_x(y)$ are diffemorphism germs of the target for any
$x$.
\end{itemize}
\end{itemize}
In \cite[(2.3)]{Mather}, J.\,Mather also showed that two map germs $f$ and $g$ are
contact equivalent if and only if
the ideals generated by the component functions of $f$ and that of $g\comp\varphi$, respectively,
are the same for some coordinate change $\varphi$ of the source.
These notions have clearly a holomorphic analogue.
For shortness, we often call right equivalence, right-left equivalence and contact equivalence
by $\mathcal R$-equivalence, $\mathcal A$-equivalence, and
$\mathcal K$-equivalence, respectively.

It is natural to consider the bi-Lipschitz analogue of these notions.
This direction seems to be first considered in \cite{RT} by J.-J.\,Risler
and D.\,Trotman in the context of singularity theory
after the establishment of the theory of Lipschitz stratifications \cite{Mostowski}
(see also \cite{Parusinski}).
They showed that if two holomorphic function germs are right-left
equivalent in the bi-Lipschitz sense, then they have the same multiplicity.
This fact was a bit surprising, since there is a bi-Lipschitz homeomorphism
which sends a semi-line to the log spiral:
$$
(\R^2,0)\to(\R^2,0),\qquad
(r,\theta)\to(r,\theta-\log r),\quad\textrm{ in terms of polar coordinates $(r,\theta)$.}
$$
The images of lines by bi-Lipschitz homeomorphisms
may not be analytic spaces, but the concept of bi-Lipschitz homeomorphism is substantially more fruitful than just talking
about homeomorphisms. After \cite{RT}, researchers in singularity theory started
to investigate singularities from the viewpoint of bi-Lipschitz equivalence
in several contexts. According to this, we list (non-exhaustively) the following topics of study and some references:
\begin{itemize}
\item
bi-Lipschitz $\mathcal R$-classification of functions
(\cite{FR,HP1,HP2,RV})
\item
properties on bi-Lipschitz $\mathcal K$-equivalence (\cite{BCFR,RV})
\item
classification of complex surfaces singularities in the bi-Lipschitz context (\cite{BFN})
\item
directional properties of subanalytic sets via bi-Lipschitz homeomorphisms (\cite{KP})
\item
bi-Lipschitz stratifications (\cite{JV,V})
\item
the notion of integral closure technique in the bi-Lipschitz context (\cite{Gaffney}).
\end{itemize}


One of the motivations of this paper is the study of the invariance of $\LL_0(\nabla f)$ under bi-Lipschitz equivalences
(see Subsection \ref{BLeq} and Theorem \ref{BLInvFunc}) and related outcomes of the discussion based on the estimation
of \L ojasiewicz exponents. Moreover, we explore in \S\ref{MLoj}, \S\ref{genHickel} and \S\ref{MLojmonomial} the notion of
\L ojasiewicz exponent $\LL_0(I_1,\dots, I_n)$ of $n$ ideals in a Noetherian local ring of dimension $n$.
This concept was introduced in \cite{BiviaMZ2} using the notion of mixed multiplicities of ideals.
If $I$ denotes an ideal of finite colength of $\O_n$, then
we are particularly interested in the \L ojasiewicz exponent that arises when restricting $I$ to generic
linear subspaces of $\C^n$ of different dimensions, thus leading to the sequence of relative \L ojasiewicz exponents (see Definition
\ref{relativeLE}).

The notion of mixed multiplicities of ideals was originated by the results of
Risler and Teissier in \cite{Cargese} about the study of the $\mu^*$-sequence of
function germs with an isolated singularity at the origin.
Subsequently there is a well-developed theory of the notion of
mixed multiplicities of ideals which can be
found in \cite{HS} (see also the invaluable paper of D.\,Rees \cite{Rees2}).


In \S\ref{genHickel}, we discuss a generalization of an inequality proven by Hickel \cite{Hickel}.
In \S\ref{MLojmonomial} we obtain an expression of the sequence of relative \L ojasiewicz exponents of a monomial ideal $I$ of
$\O_n$ in terms of the Newton polyhedron of $I$.
In \S\ref{biLipLoj}, we show the bi-Lipschitz $\mathcal A$-invariance of $\LL_0(\nabla f)$ and several outcomes of the proof.
We also show a result about the constancy of \L ojasiewicz exponents in $\mu$-constant deformations of weighted homogeneous functions $(\C^n,0)\to (\C,0)$.
In \S\ref{LogCanTh}, we discuss the notion of log canonical threshold $\lct(I)$ of an ideal $I$ of $\O_n$.
We show that this number is bi-Lipschitz invariant and show a relation between $\lct(I)$ and \L ojasiewicz exponents that
enables us to express $\lct(I)$ in terms of
\L ojasiewicz exponents when the integral closure $\overline I$ of $I$ is a monomial ideal.
In \S\ref{LogCanTh2} we discuss the behaviour of $\lct(I)$
when restricting $I$ to generic
$i$-dimensional linear subspaces of $\C^n$, for $i=1,\dots, n$. Then there arises the sequence
$\lct^*(I)=(\lct^{(n)}(I),\dots,\lct^{(1)}(I))$ for which we show a closed formula when
$\overline I$ is a monomial ideal.

The authors would like to thank S.\,Ishii, M.\,A.\,S.\,Ruas and D.\,Trotman for helpful conversations.


\lsection{Preliminaries}

We start by recalling notational conventions.
Let $a(x)$ and $b(x)$ be two function germs $(\C^n,x_0)\to \R$, where $x_0\in\C^n$. Then
\begin{itemize}
\item {\it $a(x)\lesssim b(x)$ near $x_0$} means that there exists a positive constant $C>0$ and an open
neighbourhood $U$ of $x_0$ in $\C^n$ such that $a(x)\leq C\,b(x)$, for all $x\in U$.
\item {\it $a(x)\sim b(x)$ near $x_0$} means that $a(x)\lesssim b(x)$ near $x_0$ and
$b(x)\lesssim a(x)$ near $x_0$.
\end{itemize}
For an $n$-tuple $x=(x_1,\dots,x_n)\in\C^n$, we write
$\|x\|=\sqrt{|x_1|^2+\cdots+|x_n|^2}$.

\subsection{Bi-Lipschitz equivalences}\label{BLeq}
We start with recalling the definition of bi-Lipschitz map.
A map germ $f:(\C^n,0)\to(\C^p,0)$ is said to be {\it Lipschitz} if
$$
\|f(x)-f(x')\|\lesssim\|x-x'\| \textrm{ near 0}.
$$
We say that a homeomorphism $h:(\C^n,0)\to(\C^n,0)$ is {\it bi-Lipschitz} if
$h$ and $h^{-1}$ are Lipschitz.
Now we can state obvious bi-Lipschitz analogues for several equivalence relations:
\begin{itemize}
\item Two map germs $f,g:(\C^n,0)\to(\C^p,0)$ are said to be
{\it bi-Lipschitz $\mathcal R$-equivalent}
if there is a bi-Lipschitz homeomorphism
$\varphi:(\C^n,0)\to(\C^n,0)$ so that $f=g\comp\varphi$.
\item
Two map germs $f,g:(\C^n,0)\to(\C^p,0)$ are said to be
{\it bi-Lipschitz $\mathcal A$-equivalent}
if there are a bi-Lipschitz homeomorphism
$\varphi:(\C^n,0)\to(\C^n,0)$ and a bi-Lipschitz homeomorphism $\phi:(\C^p,0)\to(\C^p,0)$
so that $\phi(f(x))=g(\varphi(x))$, for all $x$
belonging to some open neighbourhood of $0\in\C^n$.
\item
Two map germs $f,g:(\C^n,0)\to(\C^p,0)$ are said to be
{\it bi-Lipschitz $\mathcal K$-equivalent}
if there are a bi-Lipschitz homeomorphism
$\varphi:(\C^n,0)\to(\C^n,0)$ and a
bi-Lipschitz homeomorphism $\Phi:(\C^n\times\C^p,0)\to(\C^n\times\C^p,0)$, written as
$(x,y)\mapsto(\varphi(x),\phi_x(y))$,
so that $\Phi(\C^n\times\{0\})=\C^n\times\{0\}$ and $\phi_x(f(x))=g(\varphi(x))$, for all $x$ belonging to
some open neighbourhood of $0\in\C^n$.
\item
Two map germs $f,g:(\C^n,0)\to(\C^p,0)$ are said to be
{\it bi-Lipschitz $\mathcal K^*$-equivalent}
if there are a bi-Lipschitz homeomorphism
$\varphi:(\C^n,0)\to(\C^n,0)$ and a map $A:(\C^n,0)\to\GL(\C^p)$
so that $A(x)$ and $A(x)^{-1}$ are Lipschitz and that
$A(x)f(x)=g(\varphi(x))$,
for all $x$ belonging to some open neighbourhood of $0\in\C^n$.
\item
Two subsets $X_1$ and $X_2$ of $(\C^n,0)$ are
{\it bi-Lipschitz equivalent} if
there is a bi-Lipschitz homeomorphism
$\varphi:(\C^n,0)\to(\C^n,0)$ so that $\varphi(X_1)=X_2$.
\end{itemize}
The definition of bi-Lipschitz $\mathcal K$-equivalence
is used in \cite{BCFR}.
It is possible to consider a weaker version of the
definition of $\mathcal K$-equivalence by
replacing the condition that $\Phi$ is bi-Lipschitz
by the condition that $\phi_x$ is bi-Lipschitz, for all $x$ belonging to some open neighbourhood of $0\in\C^n$.
We only need this condition in the proof of Theorem \ref{lctBL}.

The definition of $\mathcal K^*$-equivalence is inspired by the
condition (iii) of the first proposition in paragraph (2.3) in \cite{Mather}.

For a bi-Lipschitz homeomorphism $\varphi:(\C^n,0)\to(\C^n,0)$,
we do not have the induced map $\varphi^*:\O_n\to\O_n$,
since $f\circ\varphi$ may not be holomorphic for $f\in\O_n$.
So we introduce the following definition.

\begin{defn}\label{BLideals}
Let $I$ and $J$ be ideals of $\O_n$.
We say that $I$ and $J$ are {\it bi-Lipschitz equivalent} if
there exist two families $f_1,\dots,f_p$ and $g_1,\dots,g_q$ of functions of $\O_n$ such that
\begin{enumerate}
\item[(a)] $\langle f_1,\dots,f_p\rangle\subseteq I$ and $\overline{\langle f_1,\dots,f_p\rangle}=\overline I$,
\item[(b)] $\langle g_1,\dots,g_q\rangle\subseteq J$ and $\overline{\langle g_1,\dots,g_q\rangle}=\overline J$,
\item[(c)]
there is a bi-Lipschitz homeomorphism $\varphi:(\C^n,0)\to(\C^n,0)$ such
that
$$
\|(f_1(x),\dots,f_p(x))\|\sim\|(g_1(\varphi(x)),\dots,g_q(\varphi(x)))\|
\qquad\textrm{ near } 0.
$$
\end{enumerate}
\end{defn}

We remark that, under the conditions of item (a), the ideal $\langle f_1,\dots,f_p\rangle$ is
usually called a {\it reduction} of $I$ (see \cite[p.\,6]{HS}).



Here there are some obvious consequences:
\begin{itemize}
\item
If two map germs $f,g:(\C^n,0)\to (\C^p,0)$ are bi-Lipschitz $\mathcal R$-equivalent,
then they are bi-Lipschitz $\mathcal A$ (and $\mathcal K^*$)-equivalent.
\item
If two map germs $f,g:(\C^n,0)\to (\C^p,0)$ are bi-Lipschitz
$\mathcal A$-equivalent or $\mathcal K^*$-equivalent,
then they are bi-Lipschitz $\mathcal K$-equivalent.
\item
If two map germs $f$ and $g$ are bi-Lipschitz $\mathcal K$-equivalent,
then the ideals generated by their components are bi-Lipschitz equivalent.
\item
If two ideals are bi-Lipschitz equivalent,
then their zero loci are bi-Lipschitz equivalent.
\end{itemize}
The following questions seem to be open.
\begin{ques}
\begin{itemize}
\item
If $f$ and $g$ are bi-Lipschitz $\mathcal K$-equivalent,
are $f$ and $g$ bi-Lipschitz $\mathcal K^*$-equivalent?
\item
If $f$ and $g$ are bi-Lipschitz $\mathcal A$-equivalent,
are $f$ and $g$ bi-Lipschitz $\mathcal K^*$-equivalent?
\end{itemize}
\end{ques}
\begin{ques}
Let $X$ and $Y$ be germs of complex analytic subvarieties at $0$ in
$\C^n$.
If there exist a bi-Lipschitz homeomorphism
$h:(\C^n,0)\to(\C^n,0)$ so that $h(X)=Y$, are the respective defining ideals
of $X$ and $Y$ bi-Lipschitz equivalent?
\end{ques}
Let $f,g:(\C^n,0)\to(\C,0)$ be two holomorphic functions.
Assume that there is a bi-Lipschitz homeomorphism
$\varphi:(\C^n,0)\to(\C^n,0)$ so that $f^{-1}(0)=\varphi(g^{-1}(0))$.
The authors do not know whether
${g(\varphi(x))}/{f(x)}$ is bounded away from $0$ and
infinity, or not.

\subsection{\L ojasiewicz exponent of ideals}
Let $I$ and $J$ be ideals of $\O_n$. Let
$\{f_1,\dots,f_p\}$ be a generating system of $I$ and
let $\{g_1,\dots,g_q\}$ be a generating system of $J$.
Let us consider the maps
$f=(f_1,\dots,f_p):(\C^n,0)\to (\C^p,0)$ and $g=(g_1,\dots,g_q):(\C^n,0)\to (\C^q,0)$.
We define the {\it \L ojasiewicz exponent of $I$ with respect to $J$}, denoted by
$\mathcal L_J(I)$, as the infinimum of the set
\begin{equation}\label{defLJI}
\big\{\alpha\in\R_{\geq 0}: \|g(x)\|^\alpha\lesssim\|f(x)\| \textrm{ near $0$}\big\}.
\end{equation}
By convention, we set $\inf \emptyset=\infty$. So if the previous set is empty, then we
set $\LL_J(I)=\infty$.

We thus have that $\LL_J(I)$ is finite if and only if $V(I)\subseteq V(J)$ (see \cite{LT}).

Let us suppose that the ideal $I$ has finite colength.
When $J=\m_n$, then we denote the number $\LL_J(I)$ by $\LL_0(I)$. That is
$$
\mathcal L_0(I)=
\inf\big\{\alpha\in\R_{\geq 0}:\|x\|^\alpha\lesssim\|f(x)\|\textrm{ near $0$}\big\}.
$$
We refer to $\LL_0(I)$ as the {\it \L ojasiewicz exponent of $I$}.

\lsection{The sequence of mixed \L ojasiewicz exponents}\label{MLoj}

If $I$ denotes an ideal of a ring $R$, then we denote by $\overline I$ the integral closure of $I$.
Let us suppose that $I$ is an ideal of finite colength of $\O_n$ and let $J$ be a proper ideal of $\O_n$.
Then, by virtue of the results of Lejeune and Teissier in \cite[Th\'eor\`eme 7.2]{LT},
the \L ojasiewicz exponent $\LL_J(I)$ can be expressed algebraically as
$$
\LL_J(I)=\inf\left\{\frac rs: r,s\in\Z_{\geq 1},\, J^r\subseteq\overline{I^s}\right\}.
$$

This fact is one of the motivations of the definition in \cite{BiviaMZ2} of the notion of \L ojasiewicz
exponent of a set of ideals. The main tool used for this definition is the mixed multiplicity of $n$ ideals in
a local ring of dimension $n$.

Let $(R,\m)$ denote a Noetherian local ring of dimension $n$. If $I_1,\dots, I_n$ are ideals of $R$ of finite colength,
then we denote by $e(I_1,\dots, I_n)$ the mixed multiplicity of $I_1,\dots, I_n$ defined by
Teissier and Risler in \cite[\S2]{Cargese}. We also refer to \cite[\S17.4]{HS} or \cite{Swanson} for the definitions and fundamental
results concerning mixed multiplicities of ideals. Here we recall
briefly the definition of $e(I_1,\dots, I_n)$. Under the conditions
exposed above, let us consider the function $H:\Z^n_{\geq 0}\to\Z_{\geq 0}$
given by
\begin{equation}\label{Hilbertfunct}
H(r_1,\dots, r_n)=\ell\left(\frac{R}{I_1^{r_1}\cdots
I_n^{r_n}}\right),
\end{equation}
for all $(r_1,\dots, r_n)\in\Z^n_{\geq 0}$, where $\ell(M)$ denotes the length of a given $R$-module $M$. Then, it is proven in
\cite{Cargese} that there exists a polynomial $P(x_1,\dots, x_n)\in \Q[x_1,\dots, x_n]$ of
degree $n$ such that
$$
H(r_1,\dots, r_n)=P(r_1,\dots, r_n),
$$
for all sufficiently large $r_1,\dots, r_n\in\Z_{\geq 0}$. Moreover, the
coefficient of the monomial $x_1\cdots x_n$ in $P(x_1,\dots, x_n)$ is an integer.
This integer is called the {\it mixed multiplicity} of $I_1,\dots,
I_n$ and is denoted by $e(I_1,\dots, I_n)$.

We remark that if $I_1,\dots, I_n$ are all equal to a given ideal
$I$ of finite colength of $R$, then $e(I_1,\dots, I_n)=e(I)$, where $e(I)$ denotes the
Samuel multiplicity of $I$. If $i\in\{0,1,\dots, n\}$, then we denote by $e_i(I)$ the mixed
multiplicity $e(I,\dots, I, \m,\dots, \m)$, where $I$ is repeated $i$ times and the maximal
ideal $\m$ is repeated $n-i$ times. In particular $e_n(I)=e(I)$ and $e_0(I)=e(\m)$.

If $f\in\O_n$ is an analytic function germ with an isolated singularity at the origin and
$J(f)$ denotes the Jacobian ideal of $f$, then we denote by $\mu^{(i)}(f)$
the Milnor number of the restriction of $f$ to a generic linear subspace of dimension $i$ passing through the origin
in $\C^n$, for $i=0,1,\dots, n$. Teissier showed in \cite{Cargese} that $\mu^{(i)}(f)=e_i(J(f))$, for all $i=0,1,\dots, n$.
The {\it $\mu^*$-sequence of $f$} is defined as $\mu^*(f)=(\mu^{(n)}(f),\dots, \mu^{(1)}(f))$.

If $g_1,\dots, g_r\in R$ and they generate an ideal $J$ of $R$ of finite colength then we denote
the multiplicity $e(J)$ also by $e(g_1,\dots, g_r)$. We will need the
following known result (see for instance \cite[p.\ 345]{HS}).

\begin{lem}\label{fitemlesmix}
Let $(R,\m)$ be a Noetherian local ring of dimension $n\geqslant 1$. Let $I_1,\dots,
I_n$ be ideals of $R$ of finite colength. Let $g_1,\dots, g_n$ be
elements of $R$ such that $g_i\in I_i$, for all $i=1,\dots, n$,
and the ideal $\langle g_1,\dots, g_n\rangle$ has also finite
colength. Then
$$
e(g_1,\dots, g_n)\geqslant e(I_1,\dots, I_n).
$$
\end{lem}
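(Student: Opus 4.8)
The plan is to recognize $e(g_1,\dots,g_n)$ as a specialization of the ``generic'' value of the multiplicity of a parameter ideal whose generators are drawn one from each $I_i$, and to identify that generic value with the mixed multiplicity $e(I_1,\dots,I_n)$. First I would pass to the faithfully flat local extension $R'=R[t]_{\m R[t]}$, which has the same dimension $n$, has infinite residue field, and for which lengths of $\m$-primary quotients, mixed multiplicities and Samuel multiplicities are all preserved (see \cite{HS}); the containments $g_i\in I_iR'$ and the finite-colength hypotheses carry over unchanged. Hence I may assume from the outset that the residue field of $R$ is infinite.

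The two ingredients I would invoke from the theory of mixed multiplicities are the following. The first is monotonicity in the one-ideal case: when $n=1$, the containment $\langle g_1\rangle\subseteq I_1$ gives $\ell(R/\langle g_1\rangle^r)\geq\ell(R/I_1^r)$ for all $r$, and comparing the leading coefficients of these (degree-one, in $r$) functions yields $e(g_1)\geq e(I_1)$. The second, which is the heart of the matter, is Rees's theorem on joint reductions (see \cite{HS}): for \emph{general} representatives $a_i\in I_i$ the parameter ideal $\langle a_1,\dots,a_n\rangle$ is a joint reduction of $(I_1,\dots,I_n)$ and therefore satisfies $e(a_1,\dots,a_n)=e(I_1,\dots,I_n)$. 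This is exactly the point at which the passage to an infinite residue field is used, so that enough general elements exist.

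Granting the equality for general members, the inequality for our specific $g_1,\dots,g_n$ follows from the upper semicontinuity of the Hilbert--Samuel multiplicity under specialization: as $(a_1,\dots,a_n)$ varies over the affine space of admissible representatives, the value $e(\langle a_1,\dots,a_n\rangle)$ is minimal, and equal to $e(I_1,\dots,I_n)$, on a dense open subset, while the particular point $(g_1,\dots,g_n)$ can only give a value that is at least as large. I expect this passage from generic to arbitrary representatives to be the main obstacle: both the joint-reduction equality and the assertion that specialization does not lower the multiplicity are the substantive inputs, whereas the base change, the one-dimensional case, and the bookkeeping are routine.

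An alternative, more self-contained route is a direct induction on $n$. After the base change one cuts by a general element $x\in I_1$; then $\dim R/\langle x\rangle=n-1$ and $e(I_1,\dots,I_n)=e_{R/\langle x\rangle}(\bar I_2,\dots,\bar I_n)$, reducing the right-hand side to a lower-dimensional mixed multiplicity to which the inductive hypothesis applies. The difficulty of this route is that the generators $g_i$ are not general, so one must compare $e(g_1,\dots,g_n)$ with the multiplicity obtained after cutting by the auxiliary element $x$, which in general does not lie in $\langle g_1,\dots,g_n\rangle$; controlling this comparison again rests on superficiality and semicontinuity, which is why I would prefer the first route.
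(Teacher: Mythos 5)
The paper does not prove this lemma at all: it is stated as a known result with a pointer to \cite[p.~345]{HS}, so there is no internal argument to compare yours against. Your first route is the standard one from the literature: pass to $R(t)=R[t]_{\m R[t]}$ to get an infinite residue field (lengths, hence mixed multiplicities, are preserved), invoke the Risler--Teissier/Rees theorem that a sufficiently general tuple $(a_1,\dots,a_n)\in I_1\oplus\cdots\oplus I_n$ generates a parameter ideal with $e(a_1,\dots,a_n)=e(I_1,\dots,I_n)$, and then argue that the generic value is the minimal one. The base change and the $n=1$ case (containment of ideals gives the reverse inequality of Hilbert--Samuel functions, hence of leading coefficients) are fine as you describe them.

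The step you yourself flag as the obstacle is, however, a genuine gap as written. First, ``upper semicontinuity of the multiplicity under specialization'' is not an off-the-shelf fact in this generality: in a non-Cohen--Macaulay local ring $e(J)$ differs from the colength $\ell(R/J)$, so the usual conservation-of-number argument (semicontinuity of colength) does not apply directly to $e$; one must either apply colength semicontinuity to every power $J^m$ and control the passage to the limit defining $e$ (where the lower-order terms of the Hilbert--Samuel polynomial vary with the parameter), or follow the inductive superficial-element argument of the reference. Second, the parameter space is subtler than ``the affine space of admissible representatives'': $e(\langle a_1,\dots,a_n\rangle)$ is not a function of the classes of the $a_i$ in $I_i/\m I_i$ --- even whether the tuple generates a parameter ideal depends on more than these classes --- so the open locus on which the minimum is attained, and the sense in which $(g_1,\dots,g_n)$ is a specialization of a general tuple, must be set up carefully (e.g.\ along a pencil $g_i+t(a_i-g_i)$). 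As stated, the semicontinuity you invoke is essentially of the same depth as the lemma itself, so you should either supply that argument or replace it with a precise citation (e.g.\ \cite{RS} or the relevant statement in \cite{HS}); your alternative inductive route founders on the same point, as you note.
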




\begin{defn}\label{lasigma} Let $(R,\m)$ be a Noetherian local ring of dimension $n$. Let $I_1,\dots,
I_n$ be ideals of $R$. Then we define
\begin{equation}\label{sigma}
\sigma(I_1,\dots, I_n)=\max_{r\in\Z_{\geq 1}}\,e(I_1+\m^r,\dots,
I_n+\m^r).
\end{equation}
\end{defn}

The set of integers $\{e(I_1+\m^r,\dots, I_n+\m^r): r\in\Z_{\geq 0}\}$ is
not bounded in general. Thus $\sigma(I_1,\dots,I_n)$ is not
always finite. The
finiteness of $\sigma(I_1,\dots, I_n)$ is characterized in
Proposition \ref{sigmaexists}. We remark that if $I_i$ has finite
colength, for all $i=1,\dots, n$, then $\sigma(I_1,\dots, I_n)$
equals the usual notion of mixed multiplicity $e(I_1,\dots, I_n)$.

Let us suppose that the residue field $k=R/\m$ is infinite. Let $I_1,\dots, I_n$ be ideals of $R$.
We say that a given property is satisfied for a {\it sufficiently general element of $I_1\oplus\cdots\oplus I_n$}, when, after identifying $(I_1/\m I_1)\oplus \cdots \oplus (I_n/\m I_n)$
with $k^s$, for some $s\geq 1$, there exist a Zariski open subset $U\subseteq k^s$ such that the said
property holds for all elements of $U$.

\begin{prop}[{\cite[p.\,393]{BiviaMRL}}]\label{sigmaexists}
Let $I_1,\dots, I_n$ be ideals of a Noetherian
local ring $(R,\m)$ such that the residue field $\k=R/\m$ is
infinite. Then $\sigma(I_1,\dots, I_n)<\infty$ if and only if
there exist elements $g_i\in I_i$, for $i=1,\dots, n$, such that
$\langle g_1,\dots, g_n\rangle$ has finite colength. In this case,
we have that $\sigma(I_1,\dots, I_n)=e(g_1,\dots, g_n)$ for a
sufficiently general element $(g_1,\dots, g_n)\in I_1\oplus
\cdots \oplus I_n$.
\end{prop}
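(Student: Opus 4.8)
The plan is to treat both assertions — the finiteness criterion and the formula for $\sigma$ — together, after first reducing the $\max$ in \eqref{sigma} to a stable value. I would begin by recording the monotonicity of the sequence $a_r:=e(I_1+\m^r,\dots,I_n+\m^r)$. Since $I_i+\m^{r+1}\subseteq I_i+\m^r$ for every $i$, the standard monotonicity of mixed multiplicities under inclusions of finite-colength ideals (see \cite[\S17.4]{HS}, or: apply Lemma \ref{fitemlesmix} to a sufficiently general element of $I_1+\m^{r+1}\oplus\cdots\oplus I_n+\m^{r+1}$, each of whose components also lies in the corresponding $I_i+\m^r$) gives $a_{r+1}\geq a_r$. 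Hence $(a_r)_r$ is a nondecreasing sequence of nonnegative integers, $\sigma(I_1,\dots,I_n)=\sup_r a_r$, and $\sigma<\infty$ if and only if the sequence is eventually constant, say $a_r=\sigma$ for all $r\geq r_0$.

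Next I would dispatch the ``if'' direction together with one inequality of the formula. Suppose $g_i\in I_i$ with $\langle g_1,\dots,g_n\rangle$ of finite colength. As $g_i\in I_i\subseteq I_i+\m^r$, Lemma \ref{fitemlesmix} yields $e(g_1,\dots,g_n)\geq a_r$ for every $r$, whence $e(g_1,\dots,g_n)\geq\sigma$. Since $\langle g_1,\dots,g_n\rangle$ is a parameter ideal, its multiplicity is finite, so $\sigma<\infty$; as the hypothesis of this direction provides at least one such system, we conclude $\sigma<\infty$ and $e(g_1,\dots,g_n)\geq\sigma$ for every finite-colength system. The whole difficulty is therefore the reverse inequality $e(g_1,\dots,g_n)\leq\sigma$ for a general system, together with the ``only if'' direction.

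For these I would argue by induction on $n$, using a general element to drop the dimension. Assuming $I_n\neq 0$, pick $g_n\in I_n$ sufficiently general and set $R''=R/\langle g_n\rangle$, with $\dim R''=n-1$; write $I_i''$, $g_i''$ for the images in $R''$. The goal is to establish, for sufficiently general $g_n$, the two reduction identities $e(g_1,\dots,g_n;R)=e(g_1'',\dots,g_{n-1}'';R'')$ (the superficial-element reduction of multiplicity) and $\sigma(I_1,\dots,I_n;R)=\sigma(I_1'',\dots,I_{n-1}'';R'')$ (the analogous reduction applied to each $I_i+\m^r$ via Teissier's general-element formula for mixed multiplicities \cite{Cargese}). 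The base case $n=1$ is direct: if $I_1\neq 0$ a general $g_1\in I_1$ generates a finite-colength ideal with $e(g_1)=\sigma(I_1)$, whereas if $I_1=0$ then $a_r=e(\m^r)\to\infty$ and no finite-colength element exists. Combining the two identities with the inductive hypothesis in $R''$ gives $e(g_1,\dots,g_n;R)=\sigma(I_1,\dots,I_n;R)$ for a general system, and also shows that a finite-colength system exists exactly when $\sigma<\infty$: a non-existent one corresponds, after the reduction, to some image ideal becoming $0$, which forces $\sigma=\infty$ through the base case. Upper semicontinuity of multiplicity — the generic value being the minimum, attained on a Zariski-open set of initial directions — then upgrades the single system so produced to a sufficiently general one.

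The main obstacle is the $\sigma$-reduction identity $\sigma(I_1,\dots,I_n;R)=\sigma(I_1'',\dots,I_{n-1}'';R'')$, and specifically the need to reconcile genericity in $I_n$ with genericity in the larger ideals $I_n+\m^r$. The inequality ``$\geq$'' is comparatively soft; ``$\leq$'' is precisely the point where a general element of $I_n$ must behave as well as a general element of $I_n+\m^r$, and it requires that a \emph{single} general $g_n\in I_n$ be superficial for the whole family $\{I_i+\m^r\}_r$ simultaneously, so that one fixed $R''$ computes $a_r(R)=a_r(R'')$ for all $r$ at once. I would handle this via Artin--Rees (for $r\gg 0$ the directions $I_i/\m I_i$ embed as a linear subspace of $(I_i+\m^r)/\m(I_i+\m^r)$, on which the genericity conditions are Zariski-open) together with the fact that superficiality is governed by finitely many associated primes of the corresponding fiber cones, uniformly in $r$ once $r\geq r_0$ by the stabilization of $a_r$. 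Granting this, the induction closes and both the finiteness criterion and the equality $\sigma(I_1,\dots,I_n)=e(g_1,\dots,g_n)$ for a sufficiently general $(g_1,\dots,g_n)\in I_1\oplus\cdots\oplus I_n$ follow.
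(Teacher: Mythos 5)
A preliminary remark: the paper does not actually prove this statement; it is quoted from \cite[p.\,393]{BiviaMRL}, where the argument rests on Rees's theory of general extensions and the Rees--Sally theory of joint reductions, so your attempt can only be judged on its own terms. The monotonicity of $a_r=e(I_1+\m^r,\dots,I_n+\m^r)$ and the implication ``existence of a finite-colength system $\Rightarrow\sigma<\infty$, with $e(g_1,\dots,g_n)\geq\sigma$ for every such system'' are correct, and they are indeed the easy half. The rest has a genuine gap, concentrated exactly where you yourself locate the difficulty. The superficiality theory of \cite[Propositions 17.2.2 and 17.4.6]{HS} says that the bad locus for an element of $J_n:=I_n+\m^r$ is a finite union of proper linear subspaces $W_1,\dots,W_m$ of $J_n/\m J_n$; but the image of $I_n/\m I_n$ is itself a proper linear subspace of $J_n/\m J_n$, and a finite union of proper subspaces can perfectly well contain a given proper subspace. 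Your proposed remedy --- Artin--Rees plus ``the genericity conditions are Zariski-open on the subspace'' --- only shows that the good locus is open \emph{in} $I_n/\m I_n$; it does not show it is \emph{nonempty} there, and that nonemptiness is essentially the proposition itself restricted to one slot. The hypothesis that some $(g_1,\dots,g_n)\in I_1\oplus\cdots\oplus I_n$ generates a finite-colength ideal must enter precisely here: over an infinite residue field, $I_n/\m I_n\subseteq W_1\cup\cdots\cup W_m$ forces $I_n/\m I_n\subseteq W_j$ for a single $j$, hence $I_n\subseteq P$ for one of the finitely many obstructing primes $P$, and one must derive a contradiction with the existence of the finite-colength system for \emph{each} kind of obstructing prime (minimal primes of $R$ of maximal dimension are easy; the relevant primes of the multi-fiber cone are the real work, and handling them is the content of Rees--Sally). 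Your write-up never makes this connection, so the inductive step does not close.

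Secondly, the base case of your induction is false as stated: in a one-dimensional local ring, $I_1\neq 0$ does not imply that a general element of $I_1$ generates a finite-colength ideal. Take $R=k[[x,y]]/(xy)$ and $I_1=(x)$: every element of $I_1$ vanishes on the branch $x=0$, so no element of $I_1$ has finite colength, while $e((x)+\m^r)=e((x,y^r))=r+1\to\infty$, so $\sigma(I_1)=\infty$, consistently with the proposition. The correct dichotomy is not $I_1=0$ versus $I_1\neq 0$ but whether $I_1$ is contained in some minimal prime $\mathfrak p$ with $\dim R/\mathfrak p=\dim R$; correspondingly, in the inductive step the failure mode is not ``some image ideal becomes $0$'' but ``some image ideal is contained in a component of maximal dimension of the successive quotients $R\to R/\langle g_n\rangle\to\cdots$,'' and that bookkeeping (the quotients need not be equidimensional) is missing. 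Both defects are repairable, but repairing them amounts to reconstructing the general-extension/joint-reduction argument that the cited source relies on.
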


Proposition \ref{sigmaexists} shows that, if $\sigma(I_1,\dots,
I_n)<\infty$, then $\sigma(I_1,\dots, I_n)$ is equal to the mixed
multiplicity of $I_1,\dots,I_n$ defined by Rees in \cite[p.\,181]{Reesllibre} (see also \cite{RS})
via the notion of general extension of a local ring. Therefore, we
will refer to $\sigma(I_1,\dots, I_n)$ as the {\it Rees' mixed
multiplicity} of $I_1,\dots, I_n$.

\begin{lem}[{\cite[p. 392]{BiviaMZ2}}]\label{reverseincl}
Let $(R,\m)$ be a Noetherian local ring of dimension $n\geq 1$. Let
$J_1,\dots, J_n$ be ideals of $R$ such that
$\sigma(J_1,\dots, J_n)<\infty$. Let $I_1,\dots, I_n$ be ideals of
$R$ for which $J_i\subseteq I_i$, for all $i=1,\dots, n$. Then
$\sigma(I_1,\dots, I_n)<\infty$ and
$$
\sigma(J_1,\dots, J_n)\geqslant \sigma(I_1,\dots, I_n).
$$
\end{lem}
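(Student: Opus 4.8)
The plan is to compare $\sigma(J_1,\dots,J_n)$ and $\sigma(I_1,\dots,I_n)$ power by power, exploiting the definition $\sigma(\cdot)=\max_{r}e(\,\cdot+\m^r,\dots,\cdot+\m^r)$. Fix $r\in\Z_{\geq 1}$ and set $A_i=J_i+\m^r$ and $B_i=I_i+\m^r$. Since $J_i\subseteq I_i$ we have $A_i\subseteq B_i$, and since each $A_i$ and $B_i$ contains $\m^r$, all of them have finite colength; consequently $\sigma(A_1,\dots,A_n)=e(A_1,\dots,A_n)$ and $\sigma(B_1,\dots,B_n)=e(B_1,\dots,B_n)$, both finite. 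Thus the whole statement reduces to the monotonicity of the classical mixed multiplicity under inclusion of finite-colength ideals, namely $e(A_1,\dots,A_n)\geq e(B_1,\dots,B_n)$ whenever $A_i\subseteq B_i$.

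To establish this inequality I would combine the two results already available. By Proposition \ref{sigmaexists} applied to the finite-colength ideals $A_1,\dots,A_n$, a sufficiently general element $(g_1,\dots,g_n)\in A_1\oplus\cdots\oplus A_n$ generates an ideal $\langle g_1,\dots,g_n\rangle$ of finite colength and satisfies $e(g_1,\dots,g_n)=e(A_1,\dots,A_n)$. Because $A_i\subseteq B_i$, these same elements satisfy $g_i\in B_i$ for every $i$, while $\langle g_1,\dots,g_n\rangle$ still has finite colength; hence Lemma \ref{fitemlesmix}, applied now to the ideals $B_1,\dots,B_n$, yields $e(g_1,\dots,g_n)\geq e(B_1,\dots,B_n)$. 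Chaining the two relations gives $e(A_1,\dots,A_n)\geq e(B_1,\dots,B_n)$, that is, $e(J_1+\m^r,\dots,J_n+\m^r)\geq e(I_1+\m^r,\dots,I_n+\m^r)$ for every $r\in\Z_{\geq 1}$.

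It remains to pass from these term-by-term inequalities to the statement about $\sigma$. Since $\sigma(J_1,\dots,J_n)=\max_r e(J_1+\m^r,\dots,J_n+\m^r)<\infty$ by hypothesis, the inequalities just proven show that $e(I_1+\m^r,\dots,I_n+\m^r)\leq\sigma(J_1,\dots,J_n)$ for all $r$; hence the set of integers $\{e(I_1+\m^r,\dots,I_n+\m^r):r\in\Z_{\geq 1}\}$ is bounded, so $\sigma(I_1,\dots,I_n)=\max_r e(I_1+\m^r,\dots,I_n+\m^r)$ is finite and is at most $\sigma(J_1,\dots,J_n)$, which is exactly the assertion.

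The one point requiring care — and the main obstacle — is that Proposition \ref{sigmaexists} is stated under the hypothesis that the residue field $\k=R/\m$ is infinite, whereas the present lemma makes no such assumption. I would remove this gap by the standard faithfully flat base change $R\to R(t)=R[t]_{\m R[t]}$, whose residue field $\k(t)$ is infinite: this extension preserves the length of every finite-colength quotient, hence preserves the Hilbert function \eqref{Hilbertfunct}, every mixed multiplicity $e$, and therefore $\sigma$ itself, while transforming the inclusions $J_i\subseteq I_i$ into $J_iR(t)\subseteq I_iR(t)$. Proving the inequality over $R(t)$ and descending then disposes of the finite-residue-field case; apart from this reduction the argument above is essentially formal.
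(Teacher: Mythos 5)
The paper does not actually prove this lemma: it is imported verbatim from \cite[p.~392]{BiviaMZ2}, so there is no internal proof to compare against. Judged on its own terms, your argument is correct and is exactly the kind of reduction the cited source uses. Fixing $r$ and passing to $A_i=J_i+\m^r\subseteq B_i=I_i+\m^r$ correctly reduces everything to monotonicity of the classical mixed multiplicity for finite-colength ideals, and your derivation of that monotonicity --- take a sufficiently general $(g_1,\dots,g_n)\in A_1\oplus\cdots\oplus A_n$ computing $e(A_1,\dots,A_n)$ via Proposition~\ref{sigmaexists}, note $g_i\in B_i$, and apply Lemma~\ref{fitemlesmix} to $B_1,\dots,B_n$ --- is precisely the general-element mechanism the paper itself deploys in the proofs of Lemma~\ref{passquot} and Proposition~\ref{meollo}. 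The passage from the term-by-term bounds to finiteness of $\sigma(I_1,\dots,I_n)$ is also sound, since a bounded set of positive integers attains its maximum. You are right that the one genuine hypothesis mismatch is the infinite residue field required by Proposition~\ref{sigmaexists}, and the faithfully flat base change $R\to R(t)=R[t]_{\m R[t]}$ disposes of it, as it preserves lengths of finite-length modules, hence the Hilbert function~(\ref{Hilbertfunct}), all mixed multiplicities, and $\sigma$, while respecting the inclusions $J_iR(t)\subseteq I_iR(t)$. (One could also note that the paper implicitly assumes an infinite residue field throughout its use of ``sufficiently general elements,'' so this reduction, while necessary for full generality, is consistent with the paper's standing conventions.) No gaps.
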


Under the conditions of Definition \ref{lasigma}, let us denote
by $J$ a proper ideal of $R$. From Lemma \ref{reverseincl} we obtain easily that
$$
\sigma(I_1,\dots, I_n)=\max_{r\in\Z_{\geq 0}}\,\sigma(I_1+J^r,\dots, I_n+J^r).
$$
Let us suppose that $\sigma(I_1,\dots, I_n)<\infty$. Hence, we define
\begin{equation}\label{laerresubJ}
r_J(I_1,\dots, I_n)=\min\big\{r\in\Z_{\geq 0}: \sigma(I_1,\dots,
I_n)=\sigma(I_1+J^r,\dots, I_n+J^r)\big\}.
\end{equation}

If $I$ is an ideal of finite colength of $R$
then we denote $r_J(I,\dots, I)$ by $r_J(I)$.
We remark that if $R$ is quasi-unmixed, then, by the Rees'
multiplicity theorem (see for instance \cite[p.\,222]{HS}) we have
$$
r_J(I)=\min\big\{r\in\Z_{\geq 0}: J^r\subseteq \overline{I}\big\}.
$$
We will denote the integer $r_\m(I)$ by $r_0(I)$.

\begin{defn}[\cite{BE2012}]\label{DefLJI}
Let $(R,\m)$ be a Noetherian local ring of dimension $n$. Let $I_1,\dots,
I_n$ be ideals of $R$ such that $\sigma(I_1,\dots, I_n)<\infty$. Let
$J$ be a proper ideal of $R$.
We define the {\it {\L}ojasiewicz exponent of $I_1,\dots, I_n$ with
respect to $J$}, denoted by $\LL_J(I_1,\dots, I_n)$, as
\begin{equation}\label{LJI}
\LL_J(I_1,\dots, I_n)=\inf_{s\geq 1}\frac{r_J(I_1^s,\dots, I_n^s)}{s}.
\end{equation}
\end{defn}

In accordance with mixed multiplicities of ideals, we also refer to the number $\LL_J(I_1,\dots, I_n)$
as the {\it mixed \L ojasiewicz exponent of $I_1,\dots, I_n$ with respect to $J$}; when $J=\m$ we denote this
number by $\LL_0(I_1,\dots, I_n)$.

\begin{rem}\label{LJIhabitual}
Let us observe that, under the conditions of Definition \ref{DefLJI}, if $I$ is an ideal of finite colength of $R$
such that $I_1=\cdots=I_n=I$, then the right hand side of (\ref{LJI}) can be rewritten as
\begin{equation}\label{sesimplifica}
\inf\left\{\frac rs: r,s\in\Z_{\geq 1},\, e(I^s)=e(I^s+J^r)\right\}.
\end{equation}
If we assume that $R$ is quasi-unmixed and $r,s\in\Z_{\geq 1}$, then the condition $e(I^s)=e(I^s+J^r)$
is equivalent to saying that $J^r\subseteq \overline{I^s}$, by the Rees' multiplicity theorem. Therefore (\ref{sesimplifica}) is expressed as
$$
\inf\left\{\frac rs: r,s\in\Z_{\geq 1},\, J^r\subseteq \overline{I^s}\right\},
$$
which coincides with the usual notion of \L ojasiewicz exponent $\LL_J(I)$ of $I$ with respect to $J$ (see \cite[Th\'eor\`eme 7.2]{LT}).
\end{rem}

As a particular case of the previous definition we introduce the following concept.

\begin{defn}\label{relativeLE} Let $(R,\m)$ be a Noetherian local ring of dimension $n$. Let $I$ be an
ideal of $R$ of finite colength. If $i\in\{1,\dots, n\}$, then we define
the {\it $i$-th relative \L ojasiewicz exponent of $I$} as
\begin{equation}\label{LiI}
\LL_0^{(i)}(I)=\LL_0(\underbrace{I,\dots, I}_{\textnormal{$i$ times}},\underbrace{\m,\dots, \m}_{\textnormal{$n-i$ times}}).
\end{equation}

We define the {\it $\LL^*_0$-vector}, or {\it $\LL^*_0$-sequence}, of $I$ as
$$
\LL^*_0(I)=\big(\LL_0^{(n)}(I),\dots, \LL_0^{(1)}(I)\big).
$$
If $J$ denotes a proper ideal of $R$, then we define {\it the $i$-th relative \L ojasiewicz exponent of $I$ with respect to $J$},
denoted by $\LL_J^{(i)}(I)$, by replacing $\m$ by $J$ in (\ref{LiI}). The {\it $\LL^*_J$-sequence} of $I$ is defined analogously.
\end{defn}

\begin{defn}
Let $(X,0)\subseteq (\C^n,0)$ be the germ at $0$ of a complex analytic variety $X$.
Let $h_1,\dots, h_m\in\O_n$ such that $(X,0)=V(h_1,\dots, h_m)$. Let $h$ denote the
map $(h_1,\dots, h_m):(\C^n,0)\to (\C^m,0)$.
Let $I$ be an ideal of $\O_n$ such that $V(I)\cap X=\{0\}$.
Then we define the {\it \L ojasiewicz exponent of $I$ relative to $(X,0)$} as the infimum
of those $\alpha>0$ such that there exists a constant $C>0$ and an open neighbourhood $U$ of
$0\in\C^n$ such that $\Vert x\Vert^\alpha\leq C\Vert h(x)\Vert$,
for all $x\in U\cap X$.
\end{defn}

By the results of Lejeune-Teissier \cite{LT} we have that if $J$
is the ideal of $\O_n$ generated by $h_1,\dots, h_m$, then $\LL_{(X,0)}(I)=\LL_J(I)$.

We will study the number $\LL_{(X,0)}(I)$ specially when $(X,0)$ is a linear subspace of $\C^n$.

\begin{thm}
Let $\pi:M\to\C^n$ be a proper modification so that $\pi^*(\m I)_0$
is formed by normal crossing divisors whose support has the
irreducible decomposition $\cup_iD_i$.
If
$$
(\pi^*\m)_0=\sum_is_iD_i,\quad (\pi^*I)_0=\sum_im_iD_i,
\quad s_i, m_i\in\mathbb{Z},
$$
then we have
\begin{equation}\label{L_X}
\LL_{(X,0)}(I)=\max\left\{\frac{m_i}{s_i}:D_i\cap
 X'\ne\emptyset\right\}
\end{equation}
where $X'$ denotes the strict transform of $X$ by $\pi$
(see \cite{BE} for details).
\end{thm}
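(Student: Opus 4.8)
The plan is to reduce the computation of $\LL_{(X,0)}(I)$ to an order-of-vanishing calculation along the analytic arcs lying on $X$, and then to read off those orders from the modification $\pi$. Fix a generating system $f=(f_1,\dots,f_p)$ of $I$, so that $\LL_{(X,0)}(I)$ is the infimum of the exponents $\alpha$ for which $\|x\|^\alpha\lesssim\|f(x)\|$ holds for $x\in X$ near $0$. By the curve selection lemma this infimum is governed by arcs: if $\gamma\colon(\C,0)\to(X,0)$ is a non-constant arc, then the inequality forces $\alpha\,\ord_t\|\gamma(t)\|\geq\ord_t\|f(\gamma(t))\|$, and conversely an $\alpha$ meeting all these arc inequalities satisfies the inequality on $X$. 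Hence
\[
\LL_{(X,0)}(I)=\sup_{\gamma}\frac{\ord_t\|f(\gamma(t))\|}{\ord_t\|\gamma(t)\|},
\]
the supremum being taken over all non-constant analytic arcs $\gamma$ on $X$ through $0$ (the denominator is positive and, since $V(I)\cap X=\{0\}$, the numerator is finite).

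Next I would transport this to $M$. As $\pi$ is a proper modification that is an isomorphism away from $0$, every non-constant arc $\gamma$ on $X$ lifts uniquely to an arc $\tilde\gamma$ on the strict transform $X'$ with $\pi\comp\tilde\gamma=\gamma$ and $\tilde\gamma(0)\in X'\cap\pi^{-1}(0)\subseteq X'\cap\bigcup_iD_i$; conversely, arcs on $X'$ project to arcs on $X$. The key local input is that $\pi^{-1}(\m)\O_M$ and $\pi^{-1}(I)\O_M$ are the invertible sheaves attached to $\sum_is_iD_i$ and $\sum_im_iD_i$, so the functions $\|x\comp\pi\|$ and $\|f\comp\pi\|$ vanish to orders $s_i$ and $m_i$ along $D_i$ (the order of the norm of a generating tuple equals the divisorial order of the corresponding ideal). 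Writing $a_i=\ord_t(\tilde\gamma^{*}D_i)\geq0$ for the contact order of $\tilde\gamma$ with $D_i$, which is positive only for finitely many $i$ and only when $\tilde\gamma(0)\in D_i$, I obtain
\[
\ord_t\|\gamma(t)\|=\sum_is_ia_i,\qquad\ord_t\|f(\gamma(t))\|=\sum_im_ia_i,
\]
and every index $i$ with $a_i>0$ satisfies $D_i\cap X'\neq\emptyset$, since then $\tilde\gamma(0)\in D_i\cap X'$.

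Put $\alpha_0=\max\{m_i/s_i:D_i\cap X'\neq\emptyset\}$. For $\LL_{(X,0)}(I)\leq\alpha_0$, note that $m_i\leq\alpha_0 s_i$ for every $i$ occurring with $a_i>0$, whence $\ord_t\|f(\gamma(t))\|=\sum_im_ia_i\leq\alpha_0\sum_is_ia_i=\alpha_0\,\ord_t\|\gamma(t)\|$ for every arc; taking the supremum yields the bound. For the reverse inequality, let $D_{i_0}$ realize the maximum, and choose a point of $D_{i_0}\cap X'$ together with an arc $\tilde\gamma$ on $X'$ through it that meets $D_{i_0}$ with multiplicity one and avoids the other components for $t\neq0$, so that $a_{i_0}=1$ and $a_j=0$ for $j\neq i_0$; then $\gamma=\pi\comp\tilde\gamma$ realizes the ratio $m_{i_0}/s_{i_0}=\alpha_0$, giving $\LL_{(X,0)}(I)\geq\alpha_0$, and the two bounds combine to give (\ref{L_X}). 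The main obstacle is precisely the construction of this extremal arc: one must guarantee that $D_{i_0}\cap X'$ contains a point through which an arc inside $X'$ meets $D_{i_0}$ transversally while staying off the remaining $D_j$. This is where the hypotheses must be used with care — after possibly refining $\pi$ so that $X'\cup\bigcup_iD_i$ has normal crossings (which does not alter the ratios $m_i/s_i$ for the components meeting $X'$), such a generic point and arc exist. The remaining technical point, the identification of the real orders of $\|x\comp\pi\|$ and $\|f\comp\pi\|$ with the divisorial orders $s_i$ and $m_i$, follows from the normal crossing monomialization of $\m$ and $I$ together with the equivalence $\|\cdot\|\sim\max|\cdot|$ of the Euclidean and maximum norms.
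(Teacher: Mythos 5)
The paper itself contains no proof of this theorem: it is stated with a pointer to \cite{BE}, where the argument runs through the Lejeune--Teissier algebraic characterization $\LL_J(I)=\inf\{r/s: J^r\subseteq\overline{I^s}\}$ together with the divisorial (valuative) criterion for membership in integral closures on the modification, rather than through explicit arcs. Your arc-theoretic route is a legitimate alternative, and your upper bound $\LL_{(X,0)}(I)\leq\max\{m_i/s_i: D_i\cap X'\neq\emptyset\}$ is essentially complete: the reduction to arcs by curve selection, the lift of an arc on $X$ to one on $X'$ via properness, and the identifications $\ord_t\|\gamma(t)\|=\sum_i s_ia_i$ and $\ord_t\|f(\gamma(t))\|=\sum_i m_ia_i$ are all sound, the last because $\pi^*\m\cdot\O_M$ and $\pi^*I\cdot\O_M$ are locally principal and generated by the corresponding monomials in coordinates adapted to the normal crossing divisor.

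The lower bound, however, has a genuine gap, and you have put your finger on it without closing it. Your extremal arc needs a point of $D_{i_0}\cap X'$ through which an arc inside $X'$ meets $D_{i_0}$ with multiplicity one while avoiding every other component. This can fail outright: if $D_{i_0}\cap X'\subseteq D_j$ for some $j\neq i_0$, every arc on $X'$ with $a_{i_0}>0$ also has $a_j\geq 1$ and realizes only the mediant $(m_{i_0}a_{i_0}+m_ja_j)/(s_{i_0}a_{i_0}+s_ja_j)$, which is strictly below $m_{i_0}/s_{i_0}$ when $m_j/s_j<m_{i_0}/s_{i_0}$; worse, the inclusion $V(g_{i_0})\cap X'\subseteq V(g_j)\cap X'$ forces, by the \L ojasiewicz inequality on $X'$, a proportional lower bound $a_j\gtrsim a_{i_0}$, so even a sequence of arcs with $a_{i_0}\to\infty$ need not have ratios converging to $m_{i_0}/s_{i_0}$. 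Your proposed repair --- refine $\pi$ until $X'\cup\bigcup_iD_i$ has normal crossings --- does not obviously help, because the quantity $\max\{m_i/s_i: D_i\cap X'\neq\emptyset\}$ is not manifestly invariant under refinement: a further blow-up can separate $D_{i_0}$ from the strict transform of $X$ (deleting $i_0$ from the index set), while the newly created exceptional components meeting the strict transform carry only mediant ratios of the form $\sum m_i/\sum s_i\leq\max_i m_i/s_i$. What your argument actually establishes is $\alpha_0(\pi')\leq\LL_{(X,0)}(I)\leq\alpha_0(\pi)$ with $\pi'$ a suitable refinement, i.e.\ the formula for a sufficiently fine modification, not the stated equality for an arbitrary $\pi$ satisfying the hypotheses. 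To close the gap you must either prove that the maximum is unchanged under refinement --- which is essentially the well-definedness content of the theorem itself, so assuming it is circular --- or carry out the lower bound on the given model, which the integral-closure argument of \cite{BE} is designed to do without constructing arcs.
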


\lsection{Inequalities relating \L ojasiewicz exponents and mixed multiplicities}\label{genHickel}

This section is motivated by the results of Hickel in \cite{Hickel}. In this section we
show some results showing how \L ojasiewicz exponents are related with quotients of
mixed multiplicities; the main result in this direction is Theorem \ref{gralHickel}.

\begin{prop}\label{meollo}
Let $(R,\m)$ be a quasi-unmixed Noetherian local ring of dimension $n$.
Let $I_1,\dots, I_n, J$ be ideals of $R$ such that
$\sigma(I_1,\dots, I_n)<\infty$, $\sigma(I_1,\dots, I_{n-1}, J)<\infty$ and $I_n$ has finite colength.
Then
$$
\frac{\sigma(I_1,\dots, I_n)}{\sigma(I_1,\dots, I_{n-1}, J)}\leq \LL_J(I_n).
$$
\end{prop}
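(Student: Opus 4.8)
The plan is to reduce the statement to the classical algebraic description of the \L ojasiewicz exponent and then to exploit three basic properties of the Rees' mixed multiplicity $\sigma$: degree-one homogeneity in each argument, invariance under integral closure, and the monotonicity recorded in Lemma \ref{reverseincl}.

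First I would invoke Remark \ref{LJIhabitual} (which rests on the Lejeune--Teissier theorem \cite{LT} together with the hypotheses that $R$ is quasi-unmixed and $I_n$ has finite colength) to write
\[
\LL_J(I_n)=\inf\left\{\frac rs : r,s\in\Z_{\geq 1},\ J^r\subseteq\overline{I_n^s}\right\}.
\]
Since $V(I_n)=\{0\}\subseteq V(J)$, this set is non-empty, so $\LL_J(I_n)<\infty$. It therefore suffices to prove that
\[
s\,\sigma(I_1,\dots,I_n)\leq r\,\sigma(I_1,\dots,I_{n-1},J)
\]
for every pair $r,s\in\Z_{\geq 1}$ with $J^r\subseteq\overline{I_n^s}$; dividing and taking the infimum of $r/s$ over all such pairs then yields the asserted bound. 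Note that $\sigma(I_1,\dots,I_{n-1},J)\geq 1>0$ by Proposition \ref{sigmaexists}, so no division by zero occurs.

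Fixing such $r,s$, the key chain I would establish is
\[
r\,\sigma(I_1,\dots,I_{n-1},J)=\sigma(I_1,\dots,I_{n-1},J^r)\geq\sigma(I_1,\dots,I_{n-1},\overline{I_n^s})=\sigma(I_1,\dots,I_{n-1},I_n^s)=s\,\sigma(I_1,\dots,I_n).
\]
Here the two outer equalities use homogeneity of $\sigma$ of degree one in its last argument; the third equality uses that $\sigma$ depends only on the integral closures of its entries; and the middle inequality is Lemma \ref{reverseincl} applied to the genuine inclusion $J^r\subseteq\overline{I_n^s}$ in the last slot, which is legitimate because $\sigma(I_1,\dots,I_{n-1},J^r)=r\,\sigma(I_1,\dots,I_{n-1},J)<\infty$.

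The main obstacle is that, unlike for honest finite-colength mixed multiplicities, homogeneity and integral-closure invariance of $\sigma$ are not immediate, since $J$ and $J^r$ need not have finite colength. To secure them I would pass to a curve by means of Proposition \ref{sigmaexists}: choose $g_1,\dots,g_{n-1}$ with $g_i\in I_i$ that are simultaneously sufficiently general for the computation of both $\sigma(I_1,\dots,I_n)$ and $\sigma(I_1,\dots,I_{n-1},J)$ (the two relevant Zariski-open conditions meet in a dense open set). Since $\sigma(I_1,\dots,I_n)<\infty$, these elements form part of a system of parameters, so $\overline{R}=R/\langle g_1,\dots,g_{n-1}\rangle$ is a one-dimensional local ring with infinite residue field. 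In $\overline{R}$ both numbers $\sigma(I_1,\dots,I_n)$ and $\sigma(I_1,\dots,I_{n-1},J)$ become the ordinary Samuel multiplicities of the images of $I_n$ and $J$, because a general element of a finite-colength ideal generates a reduction in dimension one; and there the identities $e(K^t)=t\,e(K)$, $e(K)=e(\overline K)$, and the monotonicity $K\subseteq L\Rightarrow e(K)\geq e(L)$ are all elementary and unconditional. The remaining bookkeeping is to verify that the inclusion $J^r\subseteq\overline{I_n^s}$ descends to $\overline{R}$, which is immediate since the image of an element integral over $I_n^s$ is integral over its image.
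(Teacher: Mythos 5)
Your argument is correct and follows the paper's proof essentially step for step: both reduce to the Lejeune--Teissier description $\LL_J(I_n)=\inf\{r/s: J^r\subseteq\overline{I_n^s}\}$ and then establish the same chain $r\,\sigma(I_1,\dots,I_{n-1},J)=\sigma(I_1,\dots,I_{n-1},J^r)\geq\sigma(I_1,\dots,I_{n-1},I_n^s)=s\,\sigma(I_1,\dots,I_n)$. The only divergence is in bookkeeping: the paper justifies the homogeneity and the middle inequality by citing \cite[Lemma 2.6]{BiviaMZ2} and Lemma \ref{fitemlesmix}, whereas you re-derive these properties of $\sigma$ (together with its integral-closure invariance, combined with Lemma \ref{reverseincl}) by passing to a one-dimensional quotient by a general sequence; both justifications are sound.
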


\begin{proof}
Let $r,s\in\Z_{\geq 1}$. Let us suppose that $J^r\subseteq\overline{I_n^s}$.
Then we obtain
\begin{align}
r\cdot\sigma(I_1,\dots, I_{n-1}, J)&=\sigma(I_1,\dots, I_{n-1}, J^r)\label{lowbound1}\\
&\geq \sigma(I_1,\dots, I_{n-1}, I_n^s)=s\cdot\sigma(I_1,\dots, I_{n-1}, I_n).\label{lowbound2}
\end{align}
We refer to \cite[Lemma 2.6]{BiviaMZ2} for equality (\ref{lowbound1}) and to Lemma \ref{fitemlesmix} for the
inequality in (\ref{lowbound2}).
In particular
$$
\frac rs\geq \frac{\sigma(I_1,\dots, I_{n-1}, I_n)}{\sigma(I_1,\dots, I_{n-1}, J)}.
$$
By \cite[Th\'eor\`eme 7.2]{LT} we have $\LL_J(I_n)=\inf\{\frac rs: r,s\in\Z_{\geq 1}, J^r\subseteq\overline{I_n^s}\}$ (see Remark \ref{LJIhabitual}).
Then the result follows.
\end{proof}

\begin{cor}
Let $(R,\m)$ be a quasi-unmixed Noetherian local ring of dimension $n$. Let $I$ be an ideal of finite colength of
$R$. Then
\begin{equation}\label{comparing}
\frac{e(I)}{e_{n-1}(I)}\leq \LL_0(I).
\end{equation}
and equality holds if and only if
$$
e_{n-1}(I)^ne(I)=e(I^{e_{n-1}(I)} + \m^{e(I)}).
$$
\end{cor}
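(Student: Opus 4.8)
The plan is to obtain the inequality as a direct specialization of Proposition \ref{meollo} and then to unravel the equality case by combining the Rees multiplicity theorem with the Lejeune--Teissier characterization of the \L ojasiewicz exponent. First I would prove (\ref{comparing}) by taking $I_1=\cdots=I_n=I$ and $J=\m$ in Proposition \ref{meollo}. Since $I$ has finite colength, the hypotheses hold and $\sigma(I,\dots,I)=e(I,\dots,I)=e(I)$, while $\sigma(I,\dots,I,\m)=e(I,\dots,I,\m)=e_{n-1}(I)$, where in the latter $I$ appears $n-1$ times. Proposition \ref{meollo} then reads $e(I)/e_{n-1}(I)\leq \LL_\m(I)=\LL_0(I)$, which is exactly (\ref{comparing}).

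Abbreviating $a=e_{n-1}(I)$ and $b=e(I)$, I would next translate the stated equality condition $a^nb=e(I^a+\m^b)$ into an integral-closure inclusion. Using $e(I^a)=a^ne(I)=a^nb$ and the inclusion $I^a\subseteq I^a+\m^b$ of finite-colength ideals, the Rees multiplicity theorem (valid because $R$ is quasi-unmixed; see \cite[p.\,222]{HS}) shows that $e(I^a)=e(I^a+\m^b)$ if and only if $\overline{I^a}=\overline{I^a+\m^b}$, i.e. if and only if $\m^b\subseteq\overline{I^a}$. Thus the displayed multiplicity condition is equivalent to $\m^{e(I)}\subseteq\overline{I^{e_{n-1}(I)}}$, and it remains only to show that this inclusion is in turn equivalent to equality in (\ref{comparing}).

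For that last equivalence I would use the sharp form of the description recalled in Remark \ref{LJIhabitual}: for $r,s\in\Z_{\geq 1}$ one has $\m^r\subseteq\overline{I^s}$ if and only if $r/s\geq\LL_0(I)$. This follows from the valuative criterion for integral closure applied to the finitely many Rees valuations $v$ of $I$, since $\m^r\subseteq\overline{I^s}$ amounts to $r\,v(\m)\geq s\,v(I)$ for all such $v$, and $\LL_0(I)=\max_v v(I)/v(\m)$ (see \cite{LT}). Granting this, $\m^b\subseteq\overline{I^a}$ is equivalent to $b/a\geq\LL_0(I)$; combined with the inequality $\LL_0(I)\geq b/a$ of the first step, this is precisely $\LL_0(I)=b/a$, i.e. equality in (\ref{comparing}). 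Chaining the three equivalences (equality $\Leftrightarrow \m^b\subseteq\overline{I^a}\Leftrightarrow a^nb=e(I^a+\m^b)$) completes the proof.

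I expect the genuine content to sit entirely in this last step. The inequality and the Rees-theorem reformulation are formal, but the forward implication (equality $\Rightarrow\m^b\subseteq\overline{I^a}$) is \emph{not} merely the definition of the infimum: a priori $\LL_0(I)=b/a$ only yields pairs $(r,s)$ with $r/s$ arbitrarily close to $b/a$, whereas we need the specific inclusion $\m^b\subseteq\overline{I^a}$. The crucial input is therefore that membership $\m^r\subseteq\overline{I^s}$ is governed \emph{monotonically} by the ratio $r/s$ (equivalently, that the infimum defining $\LL_0(I)$ is attained at its reduced fraction), and this is exactly the place where the valuative result of \cite{LT} is indispensable; I would take care to justify it rather than treat it as immediate.
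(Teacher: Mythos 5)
Your proof follows the same route as the paper's: inequality (\ref{comparing}) comes from Proposition \ref{meollo} with $I_1=\cdots=I_n=I$ and $J=\m$, and the equality case is the chain ``equality $\Leftrightarrow \m^{e(I)}\subseteq\overline{I^{\,e_{n-1}(I)}} \Leftrightarrow e_{n-1}(I)^ne(I)=e(I^{e_{n-1}(I)}+\m^{e(I)})$'', the second equivalence by the Rees multiplicity theorem. The paper dispatches the first equivalence with ``by the definition of $\LL_0(I)$''; your explicit justification of the forward implication via the valuative attainment of the infimum (so that $\m^r\subseteq\overline{I^s}$ is governed monotonically by $r/s$) is a correct and worthwhile filling-in of that step, not a genuinely different argument.
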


\begin{proof}
Inequality (\ref{comparing}) follows from applying Proposition \ref{meollo} to the case $I_1=\cdots=I_n=I$ and $J=\m$.

By the definition of $\LL_0(I)$ we observe that equality holds in (\ref{comparing}) if and only if
$\m^{e(I)}\subseteq \overline{I^{e_{n-1}(I)}}$.
This inclusion is equivalent to saying that
$e(I^{e_{n-1}(I)})=e(I^{e_{n-1}(I)}+\m^{e(I)})$, by the Rees' multiplicity theorem.
\end{proof}

\begin{rem}
Let $w=(w_1,\dots, w_n)\in\Z_{\geq 1}^n$ and let $d\in\Z_{\geq1}$. Let us denote $\min_iw_i$ by $w_0$.
Let $f\in\O_n$ denote a semi-weighted homogeneous function germ of degree $d$ with respect to $w$.
It is known that $\LL_0(\nabla f)\leq \frac{d-w_0}{w_0}$ (see for instance \cite[Corollary 4.7]{BE2012}).
Hence it is interesting to determine when $\LL_0(\nabla f)$ attains the maximum possible value $\frac{d-w_0}{w_0}$
(see \cite{BE2012,KOP}).

By (\ref{comparing}) we obtain
\begin{equation}\label{interessant}
\frac{\mu(f)}{\mu^{(n-1)}(f)}\leq \LL_0(\nabla f).
\end{equation}
Therefore, if $\frac{\mu(f)}{\mu^{(n-1)}(f)}=\frac{d-w_0}{w_0}$ then we have the equality $\LL_0(\nabla f)=\frac{d-w_0}{w_0}$.

Let $f_t:(\C^3,0)\to (\C,0)$ denote the analytic family of functions of
Brian\c con-Speder's example (see Example \ref{BSexample}). We recall that $f_t$ is weighted homogeneous
of degree $15$ with respect to $w=(1,2,3)$, for all $t$.
When $t\neq 0$, equality holds in (\ref{interessant}) and thus
we observe that inequality (\ref{comparing}) is sharp. However $\LL_0(\nabla f_0)=\frac{d-w_0}{w_0}$
but the equality does not hold in (\ref{interessant}).

We also remark that the Brian\c con-Speder's example also shows that if $f:(\C^n,0)\to (\C,0)$ is a weighted homogeneous function
of degree $d$ with respect to a given vector of weights $w=(w_1,\dots, w_n)\in\Z_{\geq 1}$, then we can not
expect a formula for the whole sequence $\mu^*(f)$ in terms of $w$ and $d$.
\end{rem}

\begin{cor}
Let $(R,\m)$ be a quasi-unmixed Noetherian local ring of dimension $n$.
Let $I_1,\dots, I_n$ and $J_1,\dots, J_n$ be two families of ideals of $R$ of finite colength. Then
\begin{equation}\label{fitamult1}
\frac{e(I_1,\dots, I_n)}{e(J_1,\dots, J_n)}\leq
\LL_{J_1}(I_1)\LL_{J_2}(I_2)\cdots \LL_{J_n}(I_n).
\end{equation}
In particular, if $I$ is an ideal of $R$ of finite colength, then
\begin{equation}\label{fitamult2}
e(I)\leq \LL_0(I)^n.
\end{equation}
\end{cor}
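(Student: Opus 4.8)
The plan is to deduce both inequalities from Proposition \ref{meollo} by a telescoping argument. Since every ideal appearing in the statement has finite colength, all the Rees' mixed multiplicities involved coincide with the ordinary mixed multiplicities $e(\cdots)$ and are automatically finite; in particular the finiteness hypotheses of Proposition \ref{meollo} are satisfied at every step. I will also use repeatedly that $e(\cdots)$ is a symmetric function of its $n$ arguments, so that I may move any chosen ideal into the last slot before invoking Proposition \ref{meollo}.

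First I would form the chain of $n+1$ mixed multiplicities obtained by replacing $I_1,\dots, I_n$ by $J_1,\dots, J_n$ one at a time, namely the numbers $e(J_1,\dots, J_{k-1}, I_k, I_{k+1},\dots, I_n)$ for $k=1,\dots, n+1$ (so the first, for $k=1$, is $e(I_1,\dots, I_n)$ and the last, for $k=n+1$, is $e(J_1,\dots, J_n)$). All these are positive integers because every ideal has finite colength. For a fixed $k\in\{1,\dots, n\}$ I would compare the $k$-th and $(k+1)$-th terms: after placing $I_k$ (resp. $J_k$) in the last slot, the remaining $n-1$ entries are $J_1,\dots, J_{k-1}, I_{k+1},\dots, I_n$ in both cases, and $I_k$ has finite colength, so Proposition \ref{meollo} applies to this tuple with the role of $J$ played by $J_k$. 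It yields
$$
\frac{e(J_1,\dots, J_{k-1}, I_k, I_{k+1},\dots, I_n)}{e(J_1,\dots, J_{k-1}, J_k, I_{k+1},\dots, I_n)}\leq \LL_{J_k}(I_k).
$$

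Then I would simply multiply these $n$ inequalities together. The product of the left-hand sides telescopes to $e(I_1,\dots, I_n)/e(J_1,\dots, J_n)$, while the product of the right-hand sides is $\LL_{J_1}(I_1)\cdots \LL_{J_n}(I_n)$, giving (\ref{fitamult1}). For (\ref{fitamult2}) I would specialize to $I_1=\cdots=I_n=I$ and $J_1=\cdots=J_n=\m$, so that the numerator becomes $e(I)$, each factor on the right becomes $\LL_\m(I)=\LL_0(I)$, and the denominator becomes $e(\m,\dots,\m)=e(\m)$. Since in the situation of interest (for instance $R=\O_n$, or any regular $R$) one has $e(\m)=1$, this reduces to $e(I)\leq \LL_0(I)^n$.

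I do not expect a serious obstacle: the argument is essentially bookkeeping once Proposition \ref{meollo} is available. The only points requiring care are the symmetry of $e(\cdots)$ used to reposition the ideals, the verification that the finiteness hypotheses of Proposition \ref{meollo} hold at each step (immediate from the finite-colength assumption on all the ideals, whence every relevant $\sigma$ equals a finite $e(\cdots)$), and, for the second inequality, the observation that $e(\m)=1$ in the regular case so that the factor $e(\m)$ in the denominator may be dropped.
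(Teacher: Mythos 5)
Your proof is correct and follows essentially the same route as the paper, which simply states that (\ref{fitamult1}) ``follows immediately as a recursive application of Proposition \ref{meollo}'' --- your telescoping chain, using the symmetry of $e(\cdot,\dots,\cdot)$ to move the ideal being replaced into the last slot, is precisely that recursion made explicit. Your observation that deducing (\ref{fitamult2}) requires dropping the factor $e(\m)$ from the denominator is a point the paper glosses over: the reduction is literal only when $e(\m)=1$ (e.g.\ $R=\O_n$ or any regular $R$), and in a general quasi-unmixed local ring one only gets $e(I)\leq e(\m)\,\LL_0(I)^n$ from (\ref{fitamult1}).
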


\begin{proof}
Relation (\ref{fitamult1}) follows immediately as a recursive application of Proposition \ref{meollo}.
Inequality (\ref{fitamult2}) is a consequence of applying (\ref{fitamult1}) by considering
$I_1=\cdots=I_n=I$ and $J_1=\cdots=J_n=\m$.
\end{proof}

\begin{lem}\label{passquot}
Let $(R,\m)$ denote a Noetherian local ring of dimension $n$.
Let $I_1,\dots, I_n$ be ideals of $R$ such that $\sigma(I_1,\dots, I_n)<\infty$. Let $g\in I_n$ such that
$\dim R/\langle g\rangle=n-1$ and let $p:R\to R/\langle g\rangle$ denote the canonical projection. Then
$$
\sigma(I_1,\dots, I_n)\leq \sigma(p(I_1),\dots, p(I_{n-1})).
$$
\end{lem}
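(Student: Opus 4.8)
The plan is to sandwich $\sigma(I_1,\dots,I_n)$ between two multiplicities attached to one well-chosen system of parameters containing $g$: a lower bound coming from Lemma \ref{fitemlesmix}, and the classical fact that killing one member of a system of parameters can only increase the Samuel multiplicity. First I record a lower bound valid for \emph{any} admissible choice of elements. For each $r\geq 1$ the ideal $I_i+\m^r$ contains $\m^r$, hence has finite colength, so if $a_i\in I_i$ are such that $\langle a_1,\dots,a_n\rangle$ has finite colength, then Lemma \ref{fitemlesmix} applied to $a_i\in I_i\subseteq I_i+\m^r$ gives $e(a_1,\dots,a_n)\geq e(I_1+\m^r,\dots,I_n+\m^r)$. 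By Lemma \ref{reverseincl} the right-hand side is non-decreasing in $r$ and, being bounded by the finite number $\sigma(I_1,\dots,I_n)$, equals it for $r\gg0$. Thus
\[
e(a_1,\dots,a_n)\geq \sigma(I_1,\dots,I_n)\qquad\text{whenever } a_i\in I_i \text{ and } \ell\big(R/\langle a_1,\dots,a_n\rangle\big)<\infty.
\]

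Next I choose the parameters. If $\sigma(p(I_1),\dots,p(I_{n-1}))=\infty$ the inequality is trivial, so assume it is finite. Writing $\overline R=R/\langle g\rangle$, which has dimension $n-1$, Proposition \ref{sigmaexists} yields sufficiently general elements $h_1,\dots,h_{n-1}$ of $p(I_1),\dots,p(I_{n-1})$ with $\langle h_1,\dots,h_{n-1}\rangle$ of finite colength and $e(h_1,\dots,h_{n-1};\overline R)=\sigma(p(I_1),\dots,p(I_{n-1}))$; here I use that the residue field is infinite, as it is for $R=\O_n$, the general case following by the flat base change $R\to R[t]_{\m R[t]}$. Since $p$ induces a surjection $I_i/\m I_i\twoheadrightarrow p(I_i)/\overline{\m}\,p(I_i)$, a sufficiently general $g_i\in I_i$ has $p(g_i)=h_i$ sufficiently general, so such lifts exist. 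As $R/\langle g_1,\dots,g_{n-1},g\rangle\cong\overline R/\langle h_1,\dots,h_{n-1}\rangle$ has finite length, the elements $g_1,\dots,g_{n-1},g$ form a system of parameters of $R$.

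The lower bound above, applied to $(g_1,\dots,g_{n-1},g)$, gives $e(g_1,\dots,g_{n-1},g)\geq\sigma(I_1,\dots,I_n)$, so it remains to bound this multiplicity above by $\sigma(p(I_1),\dots,p(I_{n-1}))$. This is the decisive step, and I would deduce it from the reduction formula for the multiplicity symbol $e(\,\cdot\mid M)$,
\[
e(g_1,\dots,g_{n-1},g\mid R)=e(g_1,\dots,g_{n-1}\mid \overline R)-e\big(g_1,\dots,g_{n-1}\mid (0:_R g)\big),
\]
in which the first two symbols equal the Samuel multiplicities $e(g_1,\dots,g_{n-1},g)$ and $e(h_1,\dots,h_{n-1};\overline R)$ (both $g_1,\dots,g_{n-1},g$ and its image being systems of parameters), while the subtracted term is non-negative; hence $e(g_1,\dots,g_{n-1},g)\leq e(h_1,\dots,h_{n-1};\overline R)=\sigma(p(I_1),\dots,p(I_{n-1}))$, and combining the two bounds proves the lemma. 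I expect this last inequality to be the main obstacle: it fixes the direction of the conclusion, and because $g$ is an \emph{arbitrary} parameter of $I_n$ rather than a superficial one, the ring $\overline R$ need not be Cohen--Macaulay and the correction term $e(g_1,\dots,g_{n-1}\mid (0:_R g))$ may be strictly positive, which is precisely why one obtains an inequality and not the equality valid for superficial cuts. The only remaining care is in the lifting of sufficiently general elements through $p$ and the reduction to an infinite residue field.
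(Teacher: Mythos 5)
Your proof is correct and follows essentially the same route as the paper's: choose general elements $g_i\in I_i$ whose images realize $\sigma(p(I_1),\dots,p(I_{n-1}))$ as $e(p(g_1),\dots,p(g_{n-1}))$, bound this below by $e(g_1,\dots,g_{n-1},g)$ (multiplicity can only increase when passing to the quotient by $g$), and bound that in turn below by $\sigma(I_1,\dots,I_n)$ via Lemma \ref{fitemlesmix} applied to the ideals $I_i+\m^r$ for $r\gg0$. The only difference is that the middle inequality, which the paper cites directly from \cite[Lemma 11.1.7]{HS}, you rederive from Northcott's reduction formula for the multiplicity symbol; your additional care with the infinite-residue-field hypothesis and the case $\sigma(p(I_1),\dots,p(I_{n-1}))=\infty$ is sound and slightly more thorough than the paper's own write-up.
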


\begin{proof}
By Proposition \ref{sigmaexists}, there exist $g_i\in I_i$, for $i=1,\dots,n-1$, such that
$$
\sigma(p(I_1),\dots, p(I_{n-1}))=\sigma(p(g_1),\dots, p(g_{n-1})).
$$
The image in a quotient of $R$ of a given ideal of $R$ has multiplicity greater than or
equal to the multiplicity of the given ideal (see for instance \cite[Lemma 11.1.7]{HS} or \cite[p.\,146]{HIO}).
Therefore
$$
\sigma(p(I_1),\dots, p(I_{n-1}))=e(p(g_1),\dots, p(g_{n-1}))\geq e(g_1,\dots, g_{n-1},g)\geq \sigma(I_1,\dots, I_n)
$$
where the last inequality is a consequence of Lemma \ref{fitemlesmix}.
\end{proof}

\begin{prop}\label{femquocient}
Let $(R,\m)$ be a Noetherian local ring of dimension $n\geq 2$.
Let $J$ be a proper ideal of $R$ and let $I_1,\dots, I_n$ be ideals of $R$ such that $\sigma(I_1,\dots, I_n)<\infty$.
Let $g$ denote a sufficiently general element of $I_n$ and let $p:R\to R/\langle g\rangle$ denote the
canonical projection. Then
\begin{align}
\sigma(p(I_1),\dots, p(I_{n-1}))&=\sigma(I_1,\dots, I_n)\label{effectsuperf}\\
\LL_{p(J)}(p(I_1),\dots, p(I_{n-1}))&\leq \LL_J(I_1,\dots, I_n)\label{quocients}.
\end{align}
\end{prop}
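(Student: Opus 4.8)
The plan is to establish the two assertions in turn, proving the equality (\ref{effectsuperf}) first and then feeding it, together with Lemma \ref{passquot}, into the inequality (\ref{quocients}). Throughout I take $g$ to be a sufficiently general element of $I_n$; since $\sigma(I_1,\dots,I_n)<\infty$ guarantees that $I_n$ contains a parameter, a generic such $g$ satisfies $\dim R/\langle g\rangle=n-1$.

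For (\ref{effectsuperf}), Lemma \ref{passquot} already yields $\sigma(I_1,\dots,I_n)\leq\sigma(p(I_1),\dots,p(I_{n-1}))$, so the content is the reverse inequality, which I would obtain through general elements. By Proposition \ref{sigmaexists} applied in the $(n-1)$-dimensional ring $R/\langle g\rangle$ one has $\sigma(p(I_1),\dots,p(I_{n-1}))=e(p(g_1),\dots,p(g_{n-1}))$ for generic $g_i\in I_i$, while Proposition \ref{sigmaexists} in $R$ gives $\sigma(I_1,\dots,I_n)=e(g_1,\dots,g_{n-1},g)$ once the whole tuple $(g_1,\dots,g_{n-1},g)\in I_1\oplus\cdots\oplus I_n$ is generic. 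The heart of this step is the classical multiplicity formula for a superficial element, $e(p(g_1),\dots,p(g_{n-1}))=e(g_1,\dots,g_{n-1},g)$, valid because a generic $g$ is superficial for the finite-colength parameter ideal $\langle g_1,\dots,g_{n-1},g\rangle$; chaining the three identities yields (\ref{effectsuperf}). The only delicate point is the ordering of the genericity conditions, which I would sidestep by fixing the full tuple to be generic and observing that the resulting equality depends on $g$ alone, through $R/\langle g\rangle$, and so holds for all $g$ in a dense open subset of $I_n$.

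For (\ref{quocients}) I would reduce, via Definition \ref{DefLJI} and the relations $p(I_i^s)=p(I_i)^s$, $p(J^r)=p(J)^r$, to proving for every $s\geq 1$ the inequality of integers
$$
r_{p(J)}\big(p(I_1)^s,\dots,p(I_{n-1})^s\big)\leq r_J\big(I_1^s,\dots,I_n^s\big),
$$
since then the quotients $r/s$ inherit the inequality and passing to the infimum over $s$ gives the claim. Writing $r=r_J(I_1^s,\dots,I_n^s)$, the defining property of $r_J$ supplies the hypothesis $\sigma(I_1^s,\dots,I_n^s)=\sigma(I_1^s+J^r,\dots,I_n^s+J^r)$, and the objective becomes to show that adjoining $p(J)^r$ to the first $n-1$ projected ideals leaves their Rees' mixed multiplicity unchanged. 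I would isolate as a purely $R$-theoretic lemma the identity $U=V$, where $U=\sigma(I_1^s,\dots,I_{n-1}^s,I_n)$ and $V=\sigma(I_1^s+J^r,\dots,I_{n-1}^s+J^r,I_n)$: here $V\leq U$ is Lemma \ref{reverseincl}, and for $V\geq U$ I would multiply by $s$ using homogeneity of $\sigma$ in its last slot (as in equality (\ref{lowbound1}), from \cite[Lemma 2.6]{BiviaMZ2}), so that $sV=\sigma(\dots,I_n^s)\geq\sigma(\dots,I_n^s+J^r)$ by Lemma \ref{reverseincl}, the right-hand side being $sU$ by the hypothesis together with the same homogeneity.

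This conversion of the hypothesis—in which all $n$ slots carry the exponent $s$—into the mixed-power equality $U=V$, with the last slot left unpowered, is exactly the step I expect to be the main obstacle, since equality of a single mixed multiplicity for nested ideals is much weaker than equality of integral closures and does not descend slotwise. Granting $U=V$, I would assemble the chain in $R/\langle g\rangle$: (\ref{effectsuperf}) and the multi-homogeneity of $\sigma$ (its slotwise homogeneity together with symmetry) give $\sigma(p(I_1)^s,\dots,p(I_{n-1})^s)=U$; Lemma \ref{passquot} applied to $I_1^s+J^r,\dots,I_{n-1}^s+J^r,I_n$ gives $V\leq\sigma(p(I_1)^s+p(J)^r,\dots,p(I_{n-1})^s+p(J)^r)$; and Lemma \ref{reverseincl} in the quotient gives the opposite inequality. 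Combining these with $U=V$ forces the two projected mixed multiplicities to coincide, which is precisely $r_{p(J)}(\dots)\leq r$. A pleasant feature of this arrangement is that a single general $g\in I_n$ suffices for all $s$ and $r$ at once: Lemma \ref{passquot} needs only $\dim R/\langle g\rangle=n-1$, while (\ref{effectsuperf}) is invoked only for the unpowered family and is promoted to arbitrary $s$ purely through homogeneity.
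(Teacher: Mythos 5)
Your proposal is correct and follows essentially the paper's route: for (\ref{quocients}) you use the identical ingredients (Lemma \ref{passquot}, Lemma \ref{reverseincl}, the multi-homogeneity of $\sigma$, and (\ref{effectsuperf})) arranged into the same sandwich of inequalities, merely reorganized around the explicitly named intermediate quantity $U=V$ instead of one long chain. For (\ref{effectsuperf}) the paper simply takes $g$ superficial for $I_1,\dots,I_n$ and quotes the Risler--Teissier theorem \cite[Theorem 17.4.6]{HS}, which is exactly the statement; your re-derivation via Proposition \ref{sigmaexists} and general elements is sound in outline, but the ``ordering of the genericity conditions'' you flag (that the images $p(g_1),\dots,p(g_{n-1})$ of a generic tuple land in the good open set of the quotient ring, and that a generic $g$ is superficial for the parameter ideal it helps generate) is precisely the content packaged by that citation, so it is cleaner to invoke it directly.
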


\begin{proof}
Let us suppose that $g\in I_n$ is a superficial element for $I_1,\dots, I_n$ according to
\cite[Definition 17.2.1]{HS}. In particular, the element $g$ can be considered as a sufficiently general
element of $I_n$, by \cite[Proposition 17.2.2]{HS}. Therefore equality (\ref{effectsuperf}) holds,
by a result of Risler and Teissier \cite[Theorem 17.4.6]{HS} (see also \cite[p.\,306]{Cargese}).
From (\ref{effectsuperf}) we obtain the following chain of inequalities, for any pair of integers $r,s\geq 1$:
\begin{align}
\sigma(I_1^s,\dots, I_n^s)&=s^{n}\sigma(I_1,\dots, I_n)=s^{n}\sigma(p(I_1),\dots, p(I_{n-1}))\nonumber\\
&=s\cdot \sigma(p(I_1)^s,\dots, p(I_{n-1})^s)\geq s\cdot \sigma(p(I_1)^s+p(J)^r,\dots, p(I_{n-1})^s+p(J)^r)\nonumber\\
&\geq s\cdot \sigma(I_1^s+J^r,\dots, I_{n-1}^s+J^r, I_n)=\sigma(I_1^s+J^r,\dots, I_{n-1}^s+J^r, I_n^s)\label{lemmaquot}\\
&\geq \sigma(I_1^s+J^r,\dots, I_{n-1}^s+J^r, I_n^s+J^r),\nonumber
\end{align}
where the inequality of (\ref{lemmaquot}) is a direct application of Lemma \ref{passquot}. In particular,
we find that $r_{p(J)}(p(I_1)^s,\dots, p(I_{n-1})^s)\leq r_J(I_1^s,\dots, I_n^s)$, for all $s\geq 1$,
and hence relation (\ref{quocients}) follows.
\end{proof}

The next result shows an inequality that in some situations (see Corollary \ref{motivacio}) is more subtle than
inequality (\ref{fitamult1}). Moreover, Theorem \ref{gralHickel} constitutes
a generalization of the inequality proven by Hickel in \cite[Th\'eor\`eme 1.1]{Hickel}.

\begin{thm}\label{gralHickel} Let us suppose that
$(R,\m)$ is a quasi-unmixed Noetherian local ring. Let $I_1,\dots, I_n$ and $J_1,\dots, J_n$ two families of ideals of $R$ of finite colength. Then
\begin{align*}
\frac{e(I_1,\dots, I_n)}{e(J_1,\dots, J_n)}\leq & \,\,
\LL_{J_1}(I_1,J_2\dots, J_n)
\LL_{J_2}(I_2,I_2,J_3\dots, J_n)
\LL_{J_3}(I_3,I_3,I_3,J_4\dots, J_n)\\
&\cdots
\LL_{J_{n-1}}(I_{n-1},\dots,I_{n-1},J_n)
\LL_{J_n}(I_n,\dots,I_n).
\end{align*}
\end{thm}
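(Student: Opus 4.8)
The plan is to argue by induction on the dimension $n$ of $R$, peeling off one factor from the right by Proposition \ref{meollo} and then reducing the dimension by a generic hyperplane section via Proposition \ref{femquocient}. For $n=1$ the asserted inequality is exactly $e(I_1)/e(J_1)\leq \LL_{J_1}(I_1)$, which is the one-dimensional case of Proposition \ref{meollo}; here I note that the last factor $\LL_{J_n}(I_n,\dots,I_n)$ of the statement reduces to the ordinary exponent $\LL_{J_n}(I_n)$ by Remark \ref{LJIhabitual}, so the base case and the final factor are consistent.

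For the inductive step I would first split off the last factor. Since all the $I_i$ and $J_i$ have finite colength, Proposition \ref{meollo} applied with the last slot changing from $I_n$ to $J_n$ gives
$$
\frac{e(I_1,\dots,I_{n-1},I_n)}{e(I_1,\dots,I_{n-1},J_n)}\leq \LL_{J_n}(I_n),
$$
and $\LL_{J_n}(I_n)$ is precisely the $n$-th factor of the right-hand side. It then remains to bound the ratio $e(I_1,\dots,I_{n-1},J_n)/e(J_1,\dots,J_{n-1},J_n)$, in which the last entry is frozen to $J_n$ in both numerator and denominator. This is exactly the situation that the generic section Proposition \ref{femquocient} is designed for: I would choose a sufficiently general element $g\in J_n$ and let $p\colon R\to \overline R:=R/\langle g\rangle$ be the projection. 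Applying (\ref{effectsuperf}) to the two tuples $(I_1,\dots,I_{n-1},J_n)$ and $(J_1,\dots,J_{n-1},J_n)$ simultaneously yields
$$
\frac{e(I_1,\dots,I_{n-1},J_n)}{e(J_1,\dots,J_{n-1},J_n)}=\frac{e\big(p(I_1),\dots,p(I_{n-1})\big)}{e\big(p(J_1),\dots,p(J_{n-1})\big)}.
$$

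Now $\overline R$ is a quasi-unmixed local ring of dimension $n-1$ and the images $p(I_k),p(J_k)$ are again of finite colength, so the induction hypothesis applies in $\overline R$ and bounds this last ratio by the product $\prod_{k=1}^{n-1}\LL_{p(J_k)}\big(p(I_k),\dots,p(I_k),p(J_{k+1}),\dots,p(J_{n-1})\big)$, the $k$-th factor carrying $p(I_k)$ repeated $k$ times. Finally I would lift each of these quotient exponents back to $R$ by means of (\ref{quocients}): viewing the $(n-1)$-tuple inside $\overline R$ as the image modulo $g$ of the $n$-tuple $(I_k,\dots,I_k,J_{k+1},\dots,J_{n-1},J_n)$, whose last entry is exactly the ideal $J_n$ from which $g$ was drawn, relation (\ref{quocients}) gives
$$
\LL_{p(J_k)}\big(p(I_k),\dots,p(I_k),p(J_{k+1}),\dots,p(J_{n-1})\big)\leq \LL_{J_k}(I_k,\dots,I_k,J_{k+1},\dots,J_n),
$$
and the right-hand side is precisely the $k$-th factor of the theorem. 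Multiplying these $n-1$ bounds together with the factor $\LL_{J_n}(I_n)$ obtained in the first step produces the claimed product, closing the induction.

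The step requiring the most care is the choice of $g$: I need a single generic $g\in J_n$ that is simultaneously admissible for the two multiplicity identities coming from (\ref{effectsuperf}) and for each of the $n-1$ instances of (\ref{quocients}). Since each of these holds for $g$ in a Zariski-dense open subset of $J_n/\m J_n$, their finite intersection is again dense and any element of it works. The second point to verify is that $\overline R=R/\langle g\rangle$ is again quasi-unmixed, so that the inductive hypothesis is legitimately applicable; this holds because a generic $g\in J_n$ is a parameter and the quotient of a quasi-unmixed (formally equidimensional) local ring by a parameter is again quasi-unmixed. With these two points secured, the inequality follows.
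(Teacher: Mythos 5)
Your proof is correct and follows essentially the same route as the paper's: both peel off the factor $\LL_{J_n}(I_n)$ with Proposition \ref{meollo}, pass to a section by a generic element of $J_n$, and lift the lower-dimensional exponents back to $R$ with (\ref{quocients}). The only difference is organizational --- you run a clean induction on $\dim R$, using the equality (\ref{effectsuperf}) for numerator and denominator simultaneously at each step, whereas the paper iterates explicitly, bounding the numerator via Lemma \ref{passquot} and recovering $e(J_1,\dots,J_n)$ only at the end through a superficial sequence.
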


\begin{proof} By Proposition \ref{meollo}, we have
\begin{equation}\label{1a}
e(I_1,\dots, I_n)\leq e(I_1,\dots, I_{n-1},J_n)\LL_{J_n}(I_n).
\end{equation}

Let $g_n\in J_n$ such that $\dim R/\langle g_n\rangle=n-1$ and let $p:R\to R/\langle g_n\rangle$ be the natural projection.
Therefore we obtain
\begin{equation}\label{2a}
e(I_1,\dots, I_{n-1},J_n)\leq e(p(I_1),\dots, p(I_{n-1})),
\end{equation}
by Lemma \ref{passquot}.
Applying again Proposition \ref{meollo} we have
\begin{align}
e(p(I_1),\dots, p(I_{n-1}))&\leq e(p(I_1),\dots, p(I_{n-2}), p(J_{n-1})) \LL_{p(J_{n-1})}(p(I_{n-1}))\nonumber\\
&\leq  e(p(I_1),\dots, p(I_{n-2}), p(J_{n-1})) \LL_{J_{n-1}}(I_{n-1},\dots, I_{n-1},J_n)\label{3a},
\end{align}
where (\ref{3a}) follows from Proposition \ref{femquocient}.
Thus joining (\ref{1a}), (\ref{2a}) and (\ref{3a}) we obtain
$$
e(I_1,\dots, I_n)\leq e\left(p(I_1),\dots, p(I_{n-2}), p(J_{n-1})\right) \LL_{J_{n-1}}(I_{n-1},\dots, I_{n-1},J_n)\LL_{J_n}(I_n).
$$
Now we can bound the multiplicity $e(p(I_1),\dots, p(I_{n-2}), p(J_{n-1}))$ by applying the same argument.
Then, by finite induction we construct a sequence of elements $g_i\in J_i$, for $i=2,\dots, n$, such that
$\dim R/\langle g_i,\dots, g_n\rangle=i-1$, for all $i=2,\dots, n$, and if $q$ denotes the projection $R\to R/\langle g_2,\dots, g_n\rangle$,
then
\begin{align*}
e(I_1,\dots, I_n)\leq & \,\,
e(q(I_1))
\LL_{J_2}(I_2,I_2,J_3\dots, J_n)
\LL_{J_3}(I_3,I_3,I_3,J_4\dots, J_n)\\
&\cdots
\LL_{J_{n-1}}(I_{n-1},\dots,I_{n-1},J_n)
\LL_{J_n}(I_n,\dots,I_n).
\end{align*}
By Propositions \ref{meollo} and \ref{femquocient} we have
$$
e(q(I_1))\leq e(q(J_1))\LL_{q(J_1)}(q(I_1))\leq e(q(J_1))\LL_{J_1}(I_1, J_2,\dots, J_n).
$$
Moreover, we can assume from the beginning that $g_n,g_{n-1},\dots, g_2$ forms a superficial sequence for
$J_n, J_{n-1},\dots, J_2, J_1$, in the sense of \cite[Definition 17.2.1]{HS}.
In particular we have the equality $e(q(J_1))=e(J_1,\dots, J_n)$, by \cite[Theorem 17.4.6]{HS}.
Thus the result follows.
\end{proof}

\begin{cor}\label{motivacio}
Let $(R,\m)$ be a quasi-unmixed Noetherian local ring and let $I$ and $J$ be ideals of $R$ of finite colength. Then
$$
\frac{e(I)}{e(J)}\leq \LL^{(1)}_J(I)\cdots \LL^{(n)}_J(I),
$$
where $
\LL_J^{(i)}(I)=\LL_J(\underbrace{I,\dots, I}_{\textnormal{$i$ times}},\underbrace{J,\dots, J}_{\textnormal{$n-i$ times}})$,
for $i=1,\dots, n$.
\end{cor}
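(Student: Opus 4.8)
The plan is to obtain this corollary directly as the diagonal specialization of Theorem \ref{gralHickel}, taking all the numerator ideals equal to $I$ and all the denominator ideals equal to $J$. Concretely, I would apply Theorem \ref{gralHickel} with $I_1=\cdots=I_n=I$ and $J_1=\cdots=J_n=J$. All hypotheses of that theorem are met: the quasi-unmixedness of $R$ is assumed in the corollary, and the finite colength of $I$ and $J$ guarantees that every ideal fed into Theorem \ref{gralHickel} has finite colength, so the inequality applies verbatim.

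The first step is to simplify the left-hand ratio. When the $n$ entries of a mixed multiplicity all coincide with a single ideal of finite colength, the mixed multiplicity collapses to the ordinary Samuel multiplicity, so $e(I_1,\dots,I_n)=e(I)$ and $e(J_1,\dots,J_n)=e(J)$. Hence the left-hand side of Theorem \ref{gralHickel} becomes exactly $e(I)/e(J)$, as desired.

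The second step is the bookkeeping on the right-hand product. The $k$-th factor in Theorem \ref{gralHickel} is the mixed \L ojasiewicz exponent $\LL_{J_k}(\underbrace{I_k,\dots,I_k}_{k},J_{k+1},\dots,J_n)$, carrying $k$ copies of $I_k$ followed by the $n-k$ ideals $J_{k+1},\dots,J_n$. After the substitution its subscript becomes $J$ and its argument list becomes $k$ copies of $I$ followed by $n-k$ copies of $J$; that is, it becomes $\LL_J(\underbrace{I,\dots,I}_{k},\underbrace{J,\dots,J}_{n-k})=\LL_J^{(k)}(I)$ in the notation of the corollary. Letting $k$ run from $1$ to $n$, the product on the right collapses to $\LL_J^{(1)}(I)\cdots\LL_J^{(n)}(I)$, and the asserted inequality follows.

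Since the whole argument is a pure specialization, I do not expect any genuine obstacle. The only point requiring care is verifying that the index $k$ in the general product matches the number of copies of $I$ appearing in the $k$-th relative \L ojasiewicz exponent, so that the product telescopes precisely into the sequence $\LL_J^{(1)}(I),\dots,\LL_J^{(n)}(I)$ rather than a reindexed or truncated version of it; this is immediate from the pattern of repetitions displayed in the statement of Theorem \ref{gralHickel}.
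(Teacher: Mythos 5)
Your proposal is correct and is exactly the paper's argument: the corollary is obtained by specializing Theorem \ref{gralHickel} to $I_1=\cdots=I_n=I$ and $J_1=\cdots=J_n=J$, after which the mixed multiplicities collapse to $e(I)$ and $e(J)$ and the $k$-th factor becomes $\LL_J^{(k)}(I)$. The paper states this in one line; your version just spells out the bookkeeping.
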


\begin{proof}
It follows by considering $I_1=\cdots=I_n=I$ and $J_1=\cdots=J_n=J$ in the previous theorem.
\end{proof}

From the above result we conclude that if $f\in\O_n$ has an isolated singularity at the origin, then
$$
\mu(f)\leq \LL_0^{(1)}(\nabla f)\cdots \LL_0^{(n)}(\nabla f).
$$

We remark that Theorem \ref{gralHickel} and Corollary \ref{motivacio} are suggested by \cite[Remarque 4.3]{Hickel}.
Moreover, let us observe that the numbers $\nu_I^{(i)}$ defined by Hickel in \cite[p.\ 635]{Hickel} in a regular local ring
coincide with the numbers $\LL_0^{(i)}(I)$ introduced in Definition \ref{relativeLE}, as is shown in the following lemma.

\begin{lem}\label{genericvalue}
Let $(R,m)$ be a regular local ring with infinite residue field $k$.
Let $I$ be an ideal of $R$ of finite colength and let $i\in\{1,\dots, n-1\}$.
Then $\LL_0^{(i)}(I)$ is equal to the \L ojasiewicz exponent of the image of $I$ in the quotient ring
$R/\langle h_1,\dots, h_{n-i}\rangle$, where $h_1,\dots, h_{n-i}$ are linear forms chosen generically in $k[x_1,\dots, x_n]$ and
$x_1,\dots, x_n$ denote a regular parameter system of $R$.
\end{lem}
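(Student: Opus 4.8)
The plan is to reduce both sides of the asserted equality to threshold functions and then to identify those thresholds through a generic linear section, computed inside the mixed-multiplicity formalism of \S\ref{MLoj}. Write $R'=R/\langle h_1,\dots,h_{n-i}\rangle$, a regular (hence quasi-unmixed) local ring of dimension $i$ with maximal ideal $\m'=p(\m)$, and set $I'=p(I)$, which again has finite colength. By Definition \ref{DefLJI} we have $\LL_0^{(i)}(I)=\inf_{s\geq 1}\rho(s)/s$, where
\[
\rho(s)=r_\m\big(\underbrace{I^s,\dots,I^s}_{i},\underbrace{\m^s,\dots,\m^s}_{n-i}\big),
\]
while Remark \ref{LJIhabitual} together with the Rees' multiplicity theorem in the quasi-unmixed ring $R'$ gives $\LL_0(I')=\inf_{s\geq 1}\beta(s)/s$ with $\beta(s)=\min\{r\geq 1:(\m')^r\subseteq\overline{(I')^s}\}$. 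Thus it suffices to prove that $\rho(s)=\beta(s)$ for every $s\geq 1$.

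The key step is a computation of the Rees' mixed multiplicity $\sigma(K,\dots,K,\m^t,\dots,\m^t)$ (with $i$ copies of $K$ and $n-i$ copies of $\m^t$) for $K\in\{I^s,\,I^s+\m^r\}$. Using multilinearity to extract the factor $t^{\,n-i}$ from the $\m^t$-slots and then applying equation (\ref{effectsuperf}) of Proposition \ref{femquocient} exactly $n-i$ times — at each step passing to the quotient by the general element $h_j\in\m$ — one obtains
\[
\sigma\big(\underbrace{K,\dots,K}_{i},\underbrace{\m^t,\dots,\m^t}_{n-i}\big)=t^{\,n-i}\,e\big(p(K)\big),
\]
where $e(\cdot)$ denotes Samuel multiplicity in $R'$. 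In particular $\sigma(I^s,\dots,I^s,\m^s,\dots,\m^s)=s^{n-i}e((I')^s)$ and, for $r\geq s$ (so that $\m^s+\m^r=\m^s$), $\sigma(I^s+\m^r,\dots,I^s+\m^r,\m^s,\dots,\m^s)=s^{n-i}e((I')^s+(\m')^r)$.

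Granting this, the conclusion is immediate. For $r\geq s$ the defining equality $\sigma(\cdots)=\sigma(\cdots+\m^r)$ in the definition of $r_\m$ becomes $e((I')^s)=e((I')^s+(\m')^r)$, which by the Rees' multiplicity theorem in $R'$ is equivalent to $(\m')^r\subseteq\overline{(I')^s}$; for $r<s$ the same computation yields $\sigma(\cdots+\m^r)=r^{\,n-i}e((I')^s+(\m')^r)<s^{n-i}e((I')^s)=\sigma(\cdots)$, so the threshold $\rho(s)$ is never attained below $s$. Since moreover $(I')^s\subseteq(\m')^s$ forces $\overline{(I')^s}\subseteq(\m')^s$, we also have $\beta(s)\geq s$, so both thresholds live in the range $r\geq s$ where they are governed by the same inclusion $(\m')^r\subseteq\overline{(I')^s}$; hence $\rho(s)=\beta(s)$, and the two infima coincide. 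As an independent check, iterating inequality (\ref{quocients}) of Proposition \ref{femquocient} already gives one inequality, $\LL_0(I')\leq\LL_0^{(i)}(I)$.

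The hard part is the genericity bookkeeping in the displayed multiplicity formula: equation (\ref{effectsuperf}) requires $h_1,\dots,h_{n-i}$ to be superficial for each tuple, whereas $K=I^s+\m^r$ ranges over infinitely many ideals as $(r,s)$ varies, and over a merely infinite residue field one cannot intersect infinitely many Zariski-open conditions. I would resolve this by choosing $h_1,\dots,h_{n-i}$ to be a superficial sequence with respect to the multi-Rees algebra of the pair $(I,\m)$: this is a single Zariski-open — hence, as $k$ is infinite, nonempty — condition on the linear forms, and such a sequence is automatically superficial for every bigraded piece, and in particular for all the ideals $I^s+\m^r$ at once (cf. the treatment of superficial sequences in \cite[\S17]{HS}). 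This uniform choice is exactly what is meant by ``chosen generically'' in the statement, and it legitimizes the above computation for one fixed choice of the $h_j$.
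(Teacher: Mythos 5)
Your reduction of both sides to the inclusion $(\m')^r\subseteq\overline{(I')^s}$ in the range $r\geq s$ is sound, and the easy inequality $\LL_0(I')\leq\LL_0^{(i)}(I)$ is correctly obtained (this is also how the paper gets $\nu^i_I\leq\LL_0^{(i)}(I)$: the chain $e_i(I^s)=e(I^sR_H)\geq e(I^sR_H+\m_H^r)\geq e_i(I^s+\m^r)$ only uses Lemma \ref{passquot}, which holds for \emph{any} admissible $h_j$). The gap is exactly where you suspect it, and your proposed repair does not close it. For the reverse inequality you need the equality $e_i(I^s+\m^r)=e\bigl((I^s+\m^r)R_H\bigr)$ to hold for \emph{one fixed} $H$ and \emph{all} pairs $(r,s)$, i.e.\ you must intersect infinitely many Zariski-open conditions. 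Your fix --- take $h_1,\dots,h_{n-i}$ superficial for the multi-Rees algebra of $(I,\m)$ --- is an unproved assertion, and it does not even target the right family: the bigraded pieces of that algebra are the products $I^s\m^r$, whereas the ideals you must control are the sums $I^s+\m^r$, and superficiality for products does not transfer to sums. As stated, the uniform genericity claim is not a consequence of anything in \cite[\S 17]{HS}, so the inequality $\LL_0^{(i)}(I)\leq\LL_0(I')$ remains unproved.

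The paper sidesteps the uniformity problem with an external input you do not use: Hickel's Th\'eor\`eme 1.1, which says that $\LL_0(IR_H)$ is \emph{independent} of the generic choice of $H$. Granting that, one argues by contradiction: if $\nu^i_I<\frac rs<\LL_0^{(i)}(I)$ for some single pair $(r,s)$, one is free to \emph{re-choose} $H$ adapted to that one pair so that both $e_i(I^s)=e(I^sR_H)$ and $e_i(I^s+\m^r)=e((I^s+\m^r)R_H)$ hold (a finite, hence satisfiable, set of open conditions); since $\LL_0(IR_H)$ is the same number $\nu^i_I$ for this new $H$, the inequality $\nu^i_I<\frac rs$ forces $e(I^sR_H)=e((I^sR_H+\m_H^r))$ and hence $e_i(I^s)=e_i(I^s+\m^r)$, a contradiction. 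To rescue your argument you should either import Hickel's independence theorem in the same way, or give an actual proof that a single generic $L$ computes $e_i(K)$ for every $K$ in the infinite family $\{I^s+\m^r\}_{r\geq s\geq 1}$; the latter is precisely the nontrivial content you are currently assuming.
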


\begin{proof}
By \cite[Proposition 17.2.2]{HS} and \cite[Theorem 17.4.6]{HS}, we can take
generic lineal forms $h_1,\dots, h_{n-i}\in k[x_1,\dots, x_n]$ in order to have
$e(IR_H)=e_i(I)$, where $R_H$ denotes the quotient ring $R/\langle h_1,\dots, h_{n-i}\rangle$.
Let us denote by $\m_H$ the maximal ideal of $R_H$.
By \cite[Th\'eor\`eme 1.1]{Hickel}, the number $\LL_0(IR_H)$ does not depend on $h_1,\dots, h_{n-i}$.
Let us denote the resulting number by $\nu^{i}_I$, as in \cite{Hickel}.
We observe that
\begin{align*}
\LL_0(IR_H)&=\inf\left\{\frac rs:  \m_H^r\subseteq \overline{I^sR_H},\,\,\, r,s\geq 1  \right\}\\
&=\inf\left\{\frac rs:  e(I^sR_H)=e(I^sR_H+\m_H^r),\,\,\, r,s\geq 1\right\}.
\end{align*}
Moreover
$$
\LL_0^{(i)}(I)=\inf\left\{\frac rs:  e_i(I^s)=e_i(I^s+\m^r),\,\,\, r,s\geq 1\right\}.
$$
Let $r,s\geq 1$, then we have the following:
$$
e_i(I^s)=s^ie_i(I)=s^ie(IR_H)=e(I^sR_H)\geq e(I^sR_H+\m_H^r)\geq e_i(I^s+\m^r),
$$
where the last inequality follows from Lemma \ref{passquot}. In particular, if
$e_i(I^s)=e_i(I^s+\m^r)$, then $e(I^sR_H)=e(I^sR_H+\m_H^r)$. This means that $\LL_0(IR_H)\leq \LL_0^{(i)}(I)$ and consequently
$\nu^{i}_I\leq \LL_0^{(i)}(I)$.

Let us suppose that $\nu^{i}_I<\LL_0^{(i)}(I)$. Let $r,s\geq 1$ such that $\nu^{i}_I<\frac rs<\LL_0^{(i)}(I)$.
Therefore $e_i(I^s)>e(I^s+\m^r)$. Let us consider
generic linear forms $h_1,\dots, h_{n-i}\in k[x_1,\dots, x_n]$ such that $e_i(I^s)=e(I^sR_H)$ and $e_i(I^s+\m^r)=e((I^s+\m^r)R_H)$,
where $R_H=R/\langle h_1,\dots, h_{n-i}\rangle$. Since $\nu^{(i)}_I=\LL_0(IR_H)<\frac rs$, then $e(I^sR_H)=e((I^s+\m^r)R_H)$ and hence
$e_i(I^s)=e_i(I^s+\m^r)$, which is a contradiction. Therefore $\LL_0^{(i)}(I)=\nu^{i}_I$.
\end{proof}

\begin{lem}\label{chain}
Let $(R,\m)$ be a quasi-unmixed Noetherian local ring and let $I, J$ be ideals of $R$ of finite colength such that $I\subseteq J$. Let us suppose that
the residue field $k=R/\m$ is infinite. Let $i\in\{1,\dots, n-1\}$. If $e_{i+1}(I)=e_{i+1}(J)$, then $e_i(I)=e_i(J)$.
\end{lem}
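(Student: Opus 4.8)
The plan is to reduce the dimension of $R$ by generic hyperplane sections until the mixed multiplicity $e_{i+1}$ becomes an ordinary Samuel multiplicity, and then to invoke Rees' multiplicity theorem together with the invariance of mixed multiplicities under integral closure. The point is that in $R$ itself $e_{i+1}$ is only a mixed multiplicity, to which Rees' theorem does not apply directly; but after cutting by $n-i-1$ generic elements of $\m$ the ideals live in a ring of dimension $i+1$, where $e_{i+1}$ becomes a genuine multiplicity and Rees' theorem is available. As an orientation, I note first that Lemma \ref{reverseincl} applied to $I\subseteq J$ already yields $e_{i+1}(I)\geq e_{i+1}(J)$ and $e_i(I)\geq e_i(J)$, so it suffices to promote the remaining inequality to an equality; in the end the argument will produce equality directly.

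For the reduction step I would use that the residue field $k$ is infinite to choose a superficial sequence $g_1,\dots, g_{n-i-1}\in\m$ that is simultaneously superficial for each of the finitely many collections of ideals occurring in $e_i(I)$, $e_{i+1}(I)$, $e_i(J)$, $e_{i+1}(J)$; this is possible because the superficiality conditions cut out a dense Zariski-open set (\cite[Proposition 17.2.2]{HS}). Writing $\bar R=R/\langle g_1,\dots, g_{n-i-1}\rangle$ and $\bar I,\bar J$ for the images of $I,J$, iterating equation (\ref{effectsuperf}) of Proposition \ref{femquocient} along this sequence (equivalently, applying \cite[Theorem 17.4.6]{HS} to the whole superficial sequence) and noting that each cut consumes exactly one copy of $\m$, I obtain, in $\bar R$ of dimension $i+1$,
$$
e_{i+1}(I)=e(\bar I),\qquad e_i(I)=e(\underbrace{\bar I,\dots,\bar I}_{i},\bar\m),
$$
together with the analogous identities for $J$, where $\bar\m$ denotes the maximal ideal of $\bar R$. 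Here $\bar I\subseteq\bar J$ again have finite colength, since $\m^r\subseteq I$ forces $\bar\m^r\subseteq\bar I$, and $k$ is still the residue field of $\bar R$. When $n-i-1=0$ (the case $i=n-1$) no cutting is performed and one simply takes $\bar R=R$.

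In $\bar R$ the hypothesis $e_{i+1}(I)=e_{i+1}(J)$ becomes $e(\bar I)=e(\bar J)$, an equality of ordinary Samuel multiplicities. Assuming $\bar R$ is quasi-unmixed, Rees' multiplicity theorem (\cite[p.\,222]{HS}) applies to $\bar I\subseteq\bar J$ and gives $\bar J\subseteq\overline{\bar I}$, hence $\overline{\bar I}=\overline{\bar J}$. Finally, since mixed multiplicities are unchanged when each argument is replaced by its integral closure (see \cite[\S17.4]{HS}),
\begin{align*}
e_i(I)&=e(\underbrace{\bar I,\dots,\bar I}_{i},\bar\m)
=e(\underbrace{\overline{\bar I},\dots,\overline{\bar I}}_{i},\bar\m)\\
&=e(\underbrace{\overline{\bar J},\dots,\overline{\bar J}}_{i},\bar\m)
=e(\underbrace{\bar J,\dots,\bar J}_{i},\bar\m)=e_i(J),
\end{align*}
which is the desired conclusion.

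The hard part will be the single place where the quasi-unmixed hypothesis is genuinely used, namely guaranteeing that the reduced ring $\bar R$ is again quasi-unmixed, so that Rees' theorem is legitimate in $\bar R$. This should follow from the fact that formal equidimensionality is preserved upon quotienting by a parameter element; here each $g_j$ is part of a system of parameters (indeed $\dim\bar R=\dim R-(n-i-1)=i+1$), so the reduction is by a sequence of parameters. I would verify this carefully, e.g. via the height and dimension formulas for quasi-unmixed rings in \cite[Appendix]{HS}. Everything else is routine bookkeeping with superficial sequences and the elementary monotonicity recorded in Lemma \ref{reverseincl}.
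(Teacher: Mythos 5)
Your proposal is correct and follows essentially the same route as the paper: cut by a generic (superficial) sequence to a quasi-unmixed quotient of dimension $i+1$ where $e_{i+1}$ becomes a Samuel multiplicity, apply Rees' multiplicity theorem to get $\overline{\bar I}=\overline{\bar J}$, and conclude that the $i$-th mixed multiplicities agree. The only (immaterial) difference is that the paper passes to one further quotient of dimension $i$ and uses invariance of the ordinary multiplicity under integral closure, whereas you stay in dimension $i+1$ and invoke invariance of mixed multiplicities under integral closure; your concern about quasi-unmixedness of the quotient is settled exactly as you suggest, via \cite[Proposition B.4.4]{HS}.
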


\begin{proof}
Let $h_1,\dots, h_{n-i}\in \m$ sufficiently general elements of $\m$. Let us define
$R_1=R/\langle h_1,\dots, h_{n-i}\rangle$ and $R_2=\langle h_1,\dots, h_{n-i-1}\rangle$.
If $p:R\to R_1$ and $q:R\to R_2$ denote the natural projections, then
$e_{i}(I)=e(p(I)R_1)$, $e_i(J)=e(p(J)R_1)$, $e_{i+1}(I)=e(q(I)R_2)$ and $e_{i+1}(J)=e(q(J)R_2)$.
Since the ring $R_2$ is also quasi-unmixed (see for instance \cite[Proposition B.44]{HS}), the condition
$e_{i+1}(I)=e_{i+1}(J)$ implies that $\overline{q(I)}=\overline{q(J)}$,
where the bar denotes integral closure in $R_2$, by the Rees' multiplicity theorem.
In particular we have $\overline{p(I)}=\overline{p(J)}$, as an equality of integral closures in $R_1$. Thus
$e(p(I)R_1)=e(p(J)R_1)$ and the result follows.
\end{proof}

\begin{cor}
Let $(R,\m)$ be a quasi-unmixed Noetherian local ring and let $I, J$ be ideals of $R$ of finite colength. Let us suppose that
the residue field $k=R/\m$ is infinite.
Then $\LL_J^{(1)}(I)\leq\cdots \leq \LL_J^{(n)}(I)$.
\end{cor}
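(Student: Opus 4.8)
The plan is to prove the chain of inequalities one link at a time: for each $i\in\{1,\dots,n-1\}$ I will show that $\LL_J^{(i)}(I)\le\LL_J^{(i+1)}(I)$. The engine of the argument is Lemma \ref{chain}, which should be read as the algebraic incarnation of the geometric principle that the \L ojasiewicz exponent of $I$ can only grow when $I$ is restricted to a \emph{larger} generic section; the inductive step then just transports one admissibility condition down one level.

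First I would recast each relative exponent as an infimum of ratios subject to a multiplicity equality, in the spirit of Remark \ref{LJIhabitual}. For a finite colength ideal $A$ write $e(\underbrace{A,\dots,A}_{i},\underbrace{J,\dots,J}_{n-i})$ for the associated mixed multiplicity. The claim is that
$$
\LL_J^{(i)}(I)=\inf\left\{\tfrac rs:r,s\in\Z_{\geq 1},\ e(\underbrace{I^s,\dots,I^s}_{i},\underbrace{J,\dots,J}_{n-i})=e(\underbrace{I^s+J^r,\dots,I^s+J^r}_{i},\underbrace{J,\dots,J}_{n-i})\right\}.
$$
For $i=n$ this is exactly the content of Remark \ref{LJIhabitual}. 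For $i<n$ it is obtained by unfolding Definition \ref{DefLJI}: using the homogeneity of mixed multiplicities one has $\sigma(\underbrace{I^s,\dots,I^s}_i,\underbrace{J^s,\dots,J^s}_{n-i})=s^{n}\,e(\underbrace{I,\dots,I}_i,\underbrace{J,\dots,J}_{n-i})$ and, for the perturbed tuple with $r\geq s$, $\sigma(\underbrace{I^s+J^r,\dots}_i,\underbrace{J^s,\dots}_{n-i})=s^{n-i}\,e(\underbrace{I^s+J^r,\dots}_i,\underbrace{J,\dots}_{n-i})$, so the equality defining $r_J$ in (\ref{laerresubJ}) collapses to the displayed multiplicity equality.

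Second I would run the inductive step. Fix a pair $(r,s)$ admissible at level $i+1$. Since $I^s\subseteq I^s+J^r$ are both of finite colength and $R$ is quasi-unmixed with infinite residue field, Lemma \ref{chain} — applied with its maximal ideal replaced by $J$, which is legitimate because its proof only cuts by sufficiently general elements and then invokes the Rees multiplicity theorem together with the quasi-unmixedness of the successive quotients (as in Lemma \ref{genericvalue} and Proposition \ref{femquocient}) — yields that $(r,s)$ is admissible at level $i$ as well. Hence the level-$i$ admissible set contains the level-$(i+1)$ one, and passing to infima gives $\LL_J^{(i)}(I)\le\LL_J^{(i+1)}(I)$; concatenating these inequalities proves the corollary.

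The delicate point, and the step I expect to absorb the real work, is the characterization in the second paragraph. In Definition \ref{DefLJI} the perturbation adds $J^r$ to \emph{every} slot, so the honest comparison is between $\sigma(\underbrace{I^s,\dots}_i,\underbrace{J^s,\dots}_{n-i})$ and $\sigma(\underbrace{I^s+J^r,\dots}_i,\underbrace{J^s+J^r,\dots}_{n-i})$, and $J^s+J^r=J^{\min(r,s)}$ need not equal $J^s$. Thus one must verify that the perturbation of the $J$-slots does not affect the computation, which I would do by reducing the infimum to the range $r\geq s$ (where $J^s+J^r=J^s$ and the clean formula above is exact), controlling the complementary range $r<s$ via the monotonicity of $\sigma$ under inclusions supplied by Lemma \ref{reverseincl}. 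Once this reduction is secured, the relative \L ojasiewicz exponents are expressed entirely through the mixed multiplicities $e(\underbrace{\,\cdot\,,\dots}_i,\underbrace{J,\dots}_{n-i})$ and the monotonicity in $i$ falls out of Lemma \ref{chain} exactly as above.
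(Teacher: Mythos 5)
Your overall strategy is exactly the paper's: the published proof is three lines long --- it fixes $(r,s)$ with $e_{i+1}(I^s)=e_{i+1}(I^s+J^r)$, invokes Lemma \ref{chain} to deduce $e_{i}(I^s)=e_{i}(I^s+J^r)$, and declares that ``the result follows from the definition of $\LL_J^{(i)}(I)$''. You have correctly isolated and made explicit the two things that proof leaves unsaid: first, that Lemma \ref{chain} must be run with $J$ in the spare slots rather than $\m$ (the paper literally quotes $e_i$, $e_{i+1}$, which are defined with $\m$, even though by Corollary \ref{motivacio} the relevant multiplicities carry $J$); your observation that the proof of Lemma \ref{chain} transports to this setting --- cutting by sufficiently general elements of $J$ and using the superficial-element theorem \cite[Theorem 17.4.6]{HS} together with Rees' theorem in the quasi-unmixed quotients --- is correct and necessary. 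Second, that one needs the characterization of $\LL_J^{(i)}(I)$ as $a_i:=\inf\{r/s:\ e_{i,J}(I^s)=e_{i,J}(I^s+J^r)\}$, where $e_{i,J}(A)$ denotes $e(A,\dots,A,J,\dots,J)$ with $A$ repeated $i$ times. Granting that characterization, your linkwise argument is complete and coincides with the paper's.

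The characterization is, however, precisely where the difficulty you flagged cannot be dissolved by ``reducing to $r\ge s$ and controlling $r<s$ by monotonicity''. For $i<n$ the condition defining $r_J$ in \eqref{laerresubJ} reads $s^{n-i}e_{i,J}(I^s)=\min(r,s)^{n-i}e_{i,J}(I^s+J^r)$, and since $e_{i,J}(I^s)\ge e_{i,J}(I^s+J^r)>0$ by Lemma \ref{reverseincl}, this equality \emph{forces} $r\ge s$; hence the definition computes $\max\{1,a_i\}$, not $a_i$. The discrepancy is invisible when $J=\m$ (then $I\subseteq\m$ gives $a_i\ge 1$), and more generally whenever $I\subseteq\overline{J}$, but for an arbitrary ideal $J$ of finite colength it is fatal to the last link of the chain, because $\LL_J^{(n)}(I)=a_n$ carries no such clipping: taking $I=\m$ and $J=\m^2$ in a regular local ring of dimension $n\ge 2$ gives $\LL_J^{(n-1)}(I)=1>\tfrac12=\LL_J^{(n)}(I)$. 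What your argument actually establishes is the (correct, and surely intended) chain $a_1\le\cdots\le a_n$; to recover the corollary as literally stated you would need an additional hypothesis such as $I\subseteq\overline{J}$, or to carry $\max\{1,\cdot\}$ through all levels, which fails at $i=n$. This defect is inherited from the statement and its definitions rather than introduced by you --- the paper's own proof is silent on the point --- but your claim that the range $r<s$ can be controlled away should be withdrawn or replaced by one of these fixes.
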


\begin{proof}
Let us fix an index $i\in\{1,\dots, n-1\}$. Let us fix two integers $r,s\geq 1$ such that $e_{i+1}(I^s)=e_{i+1}(I^s+J^r)$.
Then $e_{i}(I^s)=e_{i}(I^s+J^r)$, by Lemma \ref{chain}. Hence the result follows from the definition of $\LL_J^{(i)}(I)$.
\end{proof}

\lsection{Mixed \L ojasiewicz exponents of monomial ideals}\label{MLojmonomial}

Let $v\in\R^n_{\geqslant 0}$, $v=(v_1,\dots, v_n)$. We define
$v_{\min}=\min\{v_1,\dots, v_n\}$ and
$A(v)=\{j: v_j=v_{\min}\}$.
Given an index $i\in\{1,\dots, n\}$, we define
$S^{(i)}=\{v\in\R_{>0}^n:\# A(v)\geq n+1-i\}$ and
$S^{(i)}_0=\{v\in\R_{> 0}^n:\# A(v)= n+1-i\}$. We observe that
$S^{(1)}=S^{(1)}_0=\{(\lambda, \dots, \lambda): \lambda>0\}$, $S^{(n)}=\R^n_{>0}$
and $S^{(i)}_0=S^{(i)}\smallsetminus S^{(i-1)}$, for all
$i=1,\dots, n$, where we set $S^{(0)}=\emptyset$.

If $h\in\O_n$ and $h=\sum_ka_kx^k$ denotes the Taylor expansion of $h$ around the origin, then
{\it support} of $h$ is defined as the set $\supp(h)=\{k\in\Z^n_{\geq 0}: a_k\neq 0\}$. If $h\neq 0$, the {\it Newton polyhedron}
of $h$, denoted by $\Gamma_+(h)$, is the convex hull in $\R^n$ of the set
$\{k+v: k\in\supp(h),\,v\in\R^n_{\geq 0}\}$. If $h=0$, then we set $\Gamma_+(h)=\emptyset$.
If $I$ denotes an ideal of $\O_n$ and $g_1,\dots, g_r$ is a generating system of $I$, then the
{\it Newton polyhedron of $I$}, denoted by $\Gamma_+(I)$, is defined as the convex hull of
$\Gamma_+(g_1)\cup\cdots \cup \Gamma_+(g_r)$. It is easy to check that the definition of $\Gamma_+(I)$
does not depend on the chosen generating system $g_1,\dots, g_r$ of $I$.

If $v\in\R_{\geq 0}^n$ and $I$ denotes an ideal of $\O_n$, then we define
$$
\ell(v, I)=\min\left\{\langle v,k\rangle: k\in\Gamma_+(I)\right\},
$$
where $\langle\, ,\rangle$ stands for the standard scalar product in $\R^n$.
Therefore, if $v=(1,\dots, 1)\in\R^n_{\geq 0}$, then
$\ell(v,I)=\ord(I)$, where $\ord(I)$ is the {\it order of $I$}, that is, the minimum of those $r\geq 1$ such that $I\subseteq \m^r$.
If $h\in \O_n$ and $v\in\R^n_{>0}$, then the number $\ell(v, h)$ is also denoted by $d_v(h)$ and we refer to $d_v(h)$ as the
{\it degree of $h$ with respect to $v$}.

\begin{thm}\label{Lojmonomial}
If $I$ is a monomial ideal of $\O_n$ of finite colength, then
$$
\LL^{(i)}_0(I)=\max\left\{\frac{\ell(v, I)}{v_{\min}}:v\in
S^{(i)}\right\},
$$
for all $i=1,\dots, n$.
\end{thm}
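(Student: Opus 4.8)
The plan is to handle the top case $i=n$ by a direct computation with Newton polyhedra, and to reduce each case $1\leq i\leq n-1$ to the \L ojasiewicz exponent of a generic linear restriction by means of Lemma \ref{genericvalue}, after which the combinatorics of $S^{(i)}$ emerges from one general-position fact about coordinate subspaces. For $i=n$ we have $S^{(n)}=\R^n_{>0}$ and, by Remark \ref{LJIhabitual}, $\LL_0^{(n)}(I)=\LL_0(I)=\inf\{r/s:\m^r\subseteq\overline{I^s}\}$. Since $I$ is monomial, $\overline{I^s}$ is the monomial ideal with Newton polyhedron $s\Gamma_+(I)$, while $\Gamma_+(\m^r)$ is the polyhedron with vertices $re_1,\dots,re_n$ (where $e_1,\dots,e_n$ is the standard basis of $\R^n$). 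As both polyhedra are upward closed, $\m^r\subseteq\overline{I^s}$ holds if and only if each vertex $re_j$ lies in $s\Gamma_+(I)$, i.e. $rv_j\geq s\,\ell(v,I)$ for every $v\in\R^n_{>0}$ and every $j$; taking the smallest $v_j$ this reads $r/s\geq \ell(v,I)/v_{\min}$ for all $v$. Hence $\LL_0(I)=\max_{v\in S^{(n)}}\ell(v,I)/v_{\min}$, the maximum being attained along a facet normal of $\Gamma_+(I)$.

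For $1\leq i\leq n-1$ I would apply Lemma \ref{genericvalue}: $\LL_0^{(i)}(I)=\LL_0(IR_H)$, where $R_H=\O_n/\langle h_1,\dots,h_{n-i}\rangle$ and $H=V(h_1,\dots,h_{n-i})\subseteq\C^n$ is a generic linear subspace of dimension $i$. By the curve characterization of \L ojasiewicz exponents (via the curve selection lemma) this number equals $\sup_\gamma\ord_t f(\gamma(t))/\ord_t\Vert\gamma(t)\Vert$, the supremum running over analytic arcs $\gamma\colon(\C,0)\to(H,0)$, with $f$ a monomial generating system of $I$. Setting $v_\gamma=(\ord_t\gamma_1,\dots,\ord_t\gamma_n)$ one computes $\ord_t\Vert\gamma\Vert=(v_\gamma)_{\min}$ and $\ord_t f(\gamma)=\min_{a\in\Gamma_+(I)}\langle v_\gamma,a\rangle=\ell(v_\gamma,I)$, so each arc contributes exactly $\ell(v_\gamma,I)/(v_\gamma)_{\min}$. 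Thus everything reduces to identifying the weight vectors $v_\gamma$ produced by arcs in a generic $H$.

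The decisive point is the following general-position fact: for a coordinate subspace $\C^A=\{x:x_k=0\text{ for }k\notin A\}$ one has $H\cap\C^A\neq\{0\}$ if and only if $\#A\geq n+1-i$. The implication $\Leftarrow$ holds for every $H$ by the estimate $\dim(H\cap\C^A)\geq i+\#A-n\geq1$, while $\Rightarrow$ uses that a generic $i$-plane meets any coordinate subspace of dimension $\leq n-i$ only at the origin. Now the leading coefficient of an arc $\gamma$ in $H$ is a nonzero vector of $H$ supported on $A(v_\gamma)$, whence $H\cap\C^{A(v_\gamma)}\neq\{0\}$ and therefore $v_\gamma\in S^{(i)}$; this gives $\LL_0^{(i)}(I)\leq\max_{v\in S^{(i)}}\ell(v,I)/v_{\min}$. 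Conversely, given $v\in S^{(i)}$, the inequality $\#A(v)\geq n+1-i$ lets me build an arc realizing the prescribed orders: ordering the distinct values $\lambda_1<\cdots<\lambda_m$ of $v$ and using that $\dim(H\cap\C^{B_j})$ grows strictly along the flag $B_j=\{k:v_k\leq\lambda_j\}$, I pick successive leading terms $w_j\in H\cap\C^{B_j}$ nonzero on $B_j\smallsetminus B_{j-1}$ and set $\gamma=\sum_j t^{\lambda_j}w_j$, so that $v_\gamma=v$ and $\LL_0^{(i)}(I)\geq\ell(v,I)/v_{\min}$. The two inequalities give the formula.

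I expect the real obstacle to be the honest arc construction realizing a prescribed $v\in S^{(i)}$ inside one fixed generic $H$ --- matching the order in every coordinate, not only the minimal ones --- together with checking that the genericity needed for Lemma \ref{genericvalue} and the transversality of the coordinate subspaces $\C^{B_j}$ can be secured by a single choice of $H$; the remaining steps are routine bookkeeping with Newton polyhedra and orders of arcs.
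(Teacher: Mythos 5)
Your argument is correct, but it takes a genuinely different route from the paper's for the core computation. Both proofs reduce $\LL_0^{(i)}(I)$ to the \L ojasiewicz exponent of $I$ along a generic $i$-dimensional linear subspace $L$ (you do this explicitly via Lemma \ref{genericvalue}; the paper does it tacitly), and both ultimately rest on the same general-position fact, namely that $L$ meets a coordinate subspace $\C^A$ nontrivially exactly when $\# A\geq n+1-i$. Where you diverge is in how the exponent along $L$ is computed: the paper takes a toric modification subordinate simultaneously to the Newton fan of $\Gamma_+(I)$ and to the fan $\{S_0^{(i)}\}$ of the blow-up at the origin, and then invokes the resolution formula \eqref{L_X}, reading off $m_a=\ell(a,I)$ and $s_a=a_{\min}$ along each exceptional divisor $E_a$ and observing that $E_a$ meets the strict transform of $L$ precisely when $a\in S^{(i)}$. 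You instead use the Lejeune--Teissier arc (curve selection) characterization: every arc in $L$ contributes $\ell(v_\gamma,I)/(v_\gamma)_{\min}$ with $v_\gamma\in S^{(i)}$, because its leading coefficient is a nonzero vector of $L\cap\C^{A(v_\gamma)}$, and conversely every integral $v\in S^{(i)}$ is realized by an arc built from the flag $B_j=\{k:v_k\leq\lambda_j\}$. Your route avoids resolution of singularities and \eqref{L_X} altogether, at the price of the explicit arc construction; the points left to tidy are (a) arcs with a coordinate identically zero, where $v_\gamma$ has an infinite entry --- handled by a limiting argument using that $I$ has finite colength; (b) the existence of $w_j\in L\cap\C^{B_j}$ nonzero on all of $B_j\smallsetminus B_{j-1}$, which needs $L\cap\C^{B_j}\not\subseteq\{x_k=0\}$ for each such $k$ and is again generic in $L$; and (c) securing the finitely many genericity conditions (for Lemma \ref{genericvalue} and for all coordinate subspaces) by a single choice of $L$, which is immediate since each condition is Zariski-open and dense. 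These are exactly the routine points you flag, and none of them is an obstacle.
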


\begin{proof}
Let us fix an index $i\in\{1,\dots, n\}$.
The closures of connected components $S_0^{(i)}$ form a regular subdivision
corresponding to the blow up at the origin.
Let us consider a regular subdivision $\Sigma$ of the dual Newton
 polyhedron of $\Gamma_+(I)$, which is
 also a subdivision of $\{S_0^{(i)}\}$.
Then we have a natural map from $\Sigma$ to $\{S_0^{(i)}\}$.
Take a vector $a$ which is a
generator of 1-cone of $\Sigma$ and denote by $E_a$ the corresponding
exceptional divisor. Then $E_a$ meets $L'$ if and only if
the cone generated by $a$ is in the closure of some connected component of
$S_0^{(i)}$, $i\geq n+1-k$,
where $L'$ denotes the strict transform of $L$.
So \eqref{L_X} implies the result.
\end{proof}


Let us fix a subset $L\subseteq\{1,\dots,n\}$, $L\neq\emptyset$.
Then we define $\R^n_L=\{x\in\R^n: x_i=0,\,\textnormal{for all $i\notin L$}\}$.
If $h\in\O_n$ and $h=\sum_ka_kx^k$ is the Taylor expansion of $h$ around the origin,
then we denote by $h_L$ the sum of all terms $a_kx^k$ such that $k\in\R^n_L$; if no such terms exist
then we set $h_L=0$. Let $\O_{n,L}$ denote the subring of $\O_n$ formed by all function germs of $\O_n$ that only depend on
the variables $x_i$ such that $i\in L$. If $I$ is an ideal of $\O_n$, then $I^L$ denotes the ideal of $\O_{n,L}$
generated by all $h_L$ such that $h\in I$. In particular, if $I$ is an ideal of $\O_n$ of finite colength
then $I^{\{i\}}\neq 0$, for all $i=1,\dots, n$.

\begin{cor}\label{Lojorder} Let $I$ be a monomial ideal of $\O_n$ of finite colength. Then, for all $i\in\{1,\dots, n\}$, we have
\begin{equation}\label{eqLojorder}
\LL_0^{(i)}(I)=\max\big\{\ord(I^{\{j_1,\dots, j_{n+1-i}\}}): 1\leqslant j_1<\cdots<j_{n+1-i}\leqslant n\big\}.
\end{equation}
\end{cor}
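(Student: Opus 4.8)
The plan is to start from the formula of Theorem~\ref{Lojmonomial}, namely $\LL_0^{(i)}(I)=\max\{\ell(v,I)/v_{\min}:v\in S^{(i)}\}$, and to show that this continuous maximum over the cone $S^{(i)}$ is realized by the ``coordinate'' vectors that are minimal on a subset $L$ of size $n+1-i$ and large elsewhere, each such vector contributing exactly $\ord(I^L)$. The combinatorial bridge is the identity
$$
\ord(I^L)=\min\{\langle \mathbf 1,k\rangle:k\in\Gamma_+(I),\ \supp(k)\subseteq L\},
$$
valid for every nonempty $L\subseteq\{1,\dots,n\}$, where $\mathbf 1=(1,\dots,1)$. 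This rests on the identification $\Gamma_+(I^L)=\Gamma_+(I)\cap\R^n_L$ (under $\R^L\cong\R^n_L$): since $I$ is monomial, $I^L$ is generated by those monomial generators $x^{a}$ of $I$ with $\supp(a)\subseteq L$, and a short convexity argument shows that any point of $\Gamma_+(I)$ with support in $L$ is a convex combination of such generators plus a vector of $\R^n_L$ with nonnegative entries. Because $I$ has finite colength we have $\m^N\subseteq I$ for some $N$, hence $x_j^N\in I$ for each $j$, so $I^L\neq 0$ and $\ord(I^L)<\infty$ for every $L$.

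Then I would prove $\LL_0^{(i)}(I)\leq$ (right-hand side of \eqref{eqLojorder}). Fix $v\in S^{(i)}$; since scaling does not change $\ell(v,I)/v_{\min}$, I normalize $v_{\min}=1$ and set $A=A(v)=\{j:v_j=1\}$, with $\# A\geq n+1-i$. Choose any subset $L\subseteq A$ with $\# L=n+1-i$ and let $k^*\in\Gamma_+(I)$ be a lattice point with $\supp(k^*)\subseteq L$ realizing $\ord(I^L)=\langle\mathbf 1,k^*\rangle$. Since $v_j=1$ for every $j\in L$, we get $\langle v,k^*\rangle=\langle\mathbf 1,k^*\rangle=\ord(I^L)$, whence $\ell(v,I)\leq\langle v,k^*\rangle=\ord(I^L)$. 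This bounds $\ell(v,I)/v_{\min}$ by the maximum of $\ord(I^{\{j_1,\dots,j_{n+1-i}\}})$; taking the maximum over $v$ gives the desired inequality.

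For the reverse inequality I would exhibit, for each $L=\{j_1<\cdots<j_{n+1-i}\}$, a vector in $S^{(i)}$ attaining $\ord(I^L)$. Put $v_j=1$ for $j\in L$ and $v_j=T$ for $j\notin L$ with $T$ a large integer; then $A(v)=L$, so $v\in S^{(i)}$ and $v_{\min}=1$. The minimum defining $\ell(v,I)$ is attained at one of the finitely many vertices of $\Gamma_+(I)$, all lattice points; any vertex $k$ with $\supp(k)\not\subseteq L$ satisfies $\langle v,k\rangle\geq T$, while the vertices with support in $L$ (i.e.\ the vertices of $\Gamma_+(I^L)$) give $\langle v,k\rangle=\langle\mathbf 1,k\rangle\geq\ord(I^L)$, with equality achieved. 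Hence for $T>\ord(I^L)$ one has $\ell(v,I)=\ord(I^L)$, so $\ord(I^L)\leq\LL_0^{(i)}(I)$. Maximizing over $L$ yields the opposite inequality, and the two together prove \eqref{eqLojorder}.

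The main obstacle, and the only place requiring real care, is the reduction of the continuous optimization over the cone $S^{(i)}$ to the finite family of subsets of size $n+1-i$: one must check that letting the coordinates outside a minimal set $L$ tend to infinity does not push the relevant minimum off the coordinate subspace $\R^n_L$, which is exactly where the finiteness of the vertex set of $\Gamma_+(I)$ and the finite-colength hypothesis (guaranteeing $I^L\neq0$) are used. The identification $\Gamma_+(I^L)=\Gamma_+(I)\cap\R^n_L$ is routine but indispensable for matching $\ord(I^L)$ with the weighted minimum appearing in Theorem~\ref{Lojmonomial}.
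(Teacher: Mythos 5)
Your proposal is correct and follows essentially the same route as the paper: both start from Theorem \ref{Lojmonomial}, prove the upper bound by observing that a vector $v\in S^{(i)}$ with $v_{\min}=1$ evaluates to $\langle\mathbf 1,k\rangle$ on points supported in a subset $L\subseteq A(v)$ of size $n+1-i$ (so $\ell(v,I)\leq\ord(I^L)$), and prove the lower bound by testing the vector that is $1$ on $L$ and larger than $\ord(I^L)$ off $L$. The only differences are cosmetic (you argue via vertices of $\Gamma_+(I)$ and the identification $\Gamma_+(I^L)=\Gamma_+(I)\cap\R^n_L$, while the paper splits the monomials of $I$ according to membership in $I^L$), and your minor conflation of ``vertices of $\Gamma_+(I)$ with support in $L$'' with ``vertices of $\Gamma_+(I^L)$'' is harmless since the argument only uses the inequality valid for all points of $\Gamma_+(I)$ supported in $L$.
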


\begin{proof}
Let us fix an index $i\in\{1,\dots, n\}$ and let us denote
the number on the right
hand side of (\ref{eqLojorder}) by $m_i(I)$.
If $v\in\R^n_{>0}$, then we denote the vector $\frac{1}{v_{\min}}v$ by $w_v$.
If $w_v=(w_1,\dots, w_n)$, then we observe that $w_j=1$ whenever $j\in A(v)$ and $w_j>1$, otherwise.

By Theorem \ref{Lojmonomial} we have
$$
\LL^{(i)}_0(I)=\max\left\{\ell(w_v, I): v\in S^{(i)}\right\}.
$$

We remark that, since $I$ is an ideal of finite colength, then $I^L\neq 0$, for all
$L\subseteq \{1,\dots, n\}$, $L\neq\emptyset$.
Let us fix a vector $v\in S^{(i)}$.
Then from the inclusion $I^{A(v)}\subseteq I$ we deduce $\ell(w_v, I)\leq \ell(w_v, I^{A(v)})=\ord(I^{A(v)})$.
In particular, we have
\begin{align*}
\hspace{1cm}\max\left\{\ell(w_v, I): v\in S^{(i)}\right\}&\leq \max\left\{\ell(w_v, I^{A(v)}): v\in S^{(i)}\right\}\nonumber\\
&=\max\left\{\ord(I^{A(v)}): v\in S^{(i)}\right\}\nonumber\\
&\leq\max\left\{\ord(I^{A(v)}): v\in S_0^{(i)}\right\}\label{ords}\\
&=\max\left\{\ord(I^{\{j_1,\dots, j_{n+1-i}\}}): 1\leq j_1<\cdots j_{n+1-i}\leq n\right\}\nonumber.
\end{align*}
Hence $\LL_0^{(i)}(I)\leq m_i(I)$. Let us see the converse inequality by proving that for any subset
$L\subseteq\{1,\dots, n\}$ such that $\vert L\vert=n+1-i$, there exist some vector $v\in \R^n_{>0}$
such that $A(v)=L$ and $\ell(w_v, I)=\ord(I^L)$.

Let us fix a subset $L\subseteq\{1,\dots, n\}$ such that $\vert L\vert=n+1-i$ and let $v=(v_1,\dots, v_n)\in \R^n$ such that
$v_i=1$ for all $i\in L$ and $v_j>\ord(I^L)$, for all $j\notin L$. Let us observe that, if $x^k\notin I^L$, then there exists some
$j_0\notin L$ such that $k_{j_0}\geq 1$; in particular $\langle v,k\rangle\geq \ord(I^L)$.
Therefore we have
$$
\ell(w_v, I)=\ell(v, I)=\min\left\{ \min_{x^k\in I^L} \langle k,v\rangle,\,\min_{x^k\notin I^L} \langle k,v\rangle \right\}=
\min\left\{ \ord(I^L),\,\min_{x^k\notin I^L} \langle k,v\rangle \right\}=\ord(I^L).
$$
Thus the result follows.
\end{proof}

\begin{rem}
If $I$ denotes an ideal of finite colength of $\O_n$ then we observe that $\LL^*_0(I)=\LL_0^*(\overline I)$.
Therefore in Theorem \ref{Lojmonomial} and Corollary \ref{Lojorder} we can replace the ideal $I$ by any ideal
of $\O_n$ whose integral closure is a monomial ideal.
\end{rem}

\begin{ex}
Let us consider the monomial ideal of $\O_3$ given by
$I=\langle x^a,y^b,z^c,xyz\rangle$, where $a,b,c\in\Z_{\geq 0}$ and $3<a<b<c$.
Using the formula $e(I)=3!\V_n(\R_{\geq 0}^3\smallsetminus \Gamma_+(I))$ we obtain $e(I)=ab+ac+bc$.
Moreover $\LL^*_0(I)=(c,b,3)$, by Corollary \ref{Lojorder}. We remark that $\LL^*_0(I)$ does not depend on
$a$.
\end{ex}

\begin{ex}\label{BSexample}
Let us consider the family $f_t:(\C^3,0)\to (\C,0)$ given by:
$$
f_t(x,y,z)=x^{15}+z^5+xy^7+ty^6z.
$$
This is known as the Brian\c{c}on-Speder's example (see \cite{BS}).
We have that $f_t$ has an isolated singularity at the origin, $f_t$
is weighted homogeneous with respect to $w=(1,2,3)$ and $d_w(f_t)=15$, for all $t$. Therefore
$\LL_0(\nabla f_t)=14$, for all $t$, by \cite{KOP}. It is known that $\mu^{(2)}(f_0)=28$
and $\mu^{(2)}(f_t)=26$, for all $t\neq 0$ (see \cite{BS}). Hence
$$
\mu^*(f)=
\begin{cases}
(364, 28, 5)  &\textrm{if $t=0$}\\
(364, 26, 5)  &\textrm{if $t\neq 0$}.
\end{cases}
$$

It is straightforward to check that the ideal $J(f_0)$ is Newton non-degenerate, in the sense of \cite[p.\,57]{BFS}.
Thus the integral closure of $J(f_0)$ is a monomial ideal. That is
$$
\overline{J(f_0)}=\overline{\langle x^{14}, y^7, xy^6, z^4\rangle}.
$$
In particular, we can apply Corollary \ref{Lojorder} to deduce
$$
\LL_0^*(\nabla f_0)=(14,7,5).
$$

If $t\neq 0$, then $\Gamma_+(J(f_t))=\Gamma_+(J)$, where $J$ is the monomial ideal given by
$J=\langle x^{14}, y^6, z^4, y^5z, xy^6\rangle$. Obviously $J\subseteq J(f_t)$. We observe that $e(J)=336$, whereas $e(J(f_t))=364$. Since
$e(J)\neq e(J(f_t))$ we conclude that the ideal $J(f_t)$ is not Newton non-degenerate.
In particular, we can not apply Corollary \ref{Lojorder} to obtain the sequence $\LL_0^*(\nabla f_t)$.

Let us compute the number $\LL_0^{(2)}(J(f_t))$, for $t\neq 0$. Let us fix a parameter $t\neq 0$.
We remark that $\LL_0^{(2)}(J(f_t))$ is equal to the \L ojasiewicz exponent
of the function $g(x,y)=f_t(x,y,ax+by)$, for generic values $a,b\in \C$, by Lemma \ref{genericvalue} and
\cite[Proposition 2.7]{Cargese}.

We recall that if $I$ denotes an ideal of $\O_n$ of finite colength, then we denote by $r_0(I)$
the minimum of those $r\geq 1$ such that $\m^r\subseteq \overline I$.
Using Singular \cite{Singular} we observe that $r_0(J(g))=7$.

By a result of P\l oski \cite[Proposition 3.1]{Ploski}, it is enough to compute the quotients $\frac{r_0(J(g)^s)}{s}$
only for those integers $s$ such that $1\leq s\leq r_0(J(g)^s)\leq e(J(g))=26$. Moreover, since $r_0(J(g))-1<\LL_0(J(g))=
\inf_{s\geq 1}\frac{r_0(J(g)^s)}{s}$, we can consider only the integers $s$ such that
$1\leq s\leq \frac{e(J(g))}{r_0(J(g))-1}=\frac{26}{6}\simeq 4.3$, that is, such that $1\leq s\leq 4$.
Again, by applying Singular \cite{Singular} we obtain
$$
r_0(J(g))=7  \hspace{1cm}    r_0(J(g)^2)=13  \hspace{1cm}    r_0(J(g)^3)=20   \hspace{1cm}    r_0(J(g)^4)=26.
$$
Then $$\LL_0(J(g))=\min\left\{\frac{r_0(J(g))}{1},\frac{r_0(J(g)^2)}{2},\frac{r_0(J(g)^3)}{3},\frac{r_0(J(g)^4)}{4}\right\}=6.5.$$
Summing up the above information we conclude
$$
\LL^*_0(\nabla f_t)=
\begin{cases}
(14,7,5)  &\textrm{if $t=0$}\\
(14,6.5,5)  &\textrm{if $t\neq 0$}.
\end{cases}
$$

It is known that the deformation $f_t:(\C^3,0)\to (\C,0)$ is topologically trivial (see \cite{BS}).
However, this deformation is not bi-Lipschitz $\mathcal R$-trivial, as is observed by Koike \cite{Koike}.
Therefore, the fact that $\LL^*_0(\nabla f_0)\neq \LL^*_0(\nabla f_t)$, for $t\neq 0$, in this example constitutes a clue
pointing that, if $f\in\O_n$ is a function germ having an isolated singularity at the origin, then
the sequence $\LL^*_0(\nabla f)$ might be invariant in the bi-Lipschitz $\mathcal R$-orbit of $f$.
\end{ex}

\lsection{The bi-Lipschitz invariance of the \L ojasiewicz exponent}
\label{biLipLoj}
In this section we show three theorems.
The first one shows that $\LL_0(\nabla f)$ is
bi-Lipschitz $\mathcal A$-invariant and bi-Lipschitz $\mathcal K^*$-invariant, for any $f\in\O_n$ with an isolated singularity at the origin.
The second shows the bi-Lipschitz invariance of $\LL_0(I)$ and $\ord(I)$, for
any ideal $I$ of $\O_n$ of finite colength. The third one concerns the invariance of $\LL_0(\nabla f)$ in
$\mu$-constant deformations of $f$.

\begin{thm}\label{BLInvFunc}
Let $f,g\in\O_n$ with an isolated singularity at the origin.
Let us suppose that $f$ and $g$ are
bi-Lipschitz $\mathcal A$-equivalent or
bi-Lipschitz $\mathcal K^*$-equivalent. Then $\LL_0(\nabla f)=\LL_0(\nabla g)$.
\end{thm}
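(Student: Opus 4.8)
The plan is to reduce both cases to the single metric comparison
$$
\|\nabla f(x)\|\sim\|\nabla g(\varphi(x))\|\quad\text{and}\quad\|x\|\sim\|\varphi(x)\|\ \text{ near }0,
$$
after which the equality of {\L}ojasiewicz exponents is formal. Indeed, since $f$ has an isolated singularity, $\nabla f$ is nonzero on a punctured neighbourhood of $0$ and $\LL_0(\nabla f)$ admits the asymptotic description $\LL_0(\nabla f)=\limsup_{x\to 0}\frac{\log\|\nabla f(x)\|}{\log\|x\|}$ (this is just the definition $\inf\{\alpha:\|x\|^\alpha\lesssim\|\nabla f(x)\|\}$ rewritten, using $\log\|x\|\to-\infty$). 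Granting the two displayed equivalences, the bounded additive discrepancies introduced by $\sim$ are negligible against $\log\|x\|$ and $\log\|\nabla g(\varphi(x))\|$, and substituting $y=\varphi(x)$ (a homeomorphism germ, so $x\to0$ iff $y\to0$) turns the $\limsup$ over $x$ into the one over $y$; hence $\LL_0(\nabla f)=\LL_0(\nabla g)$. The relation $\|x\|\sim\|\varphi(x)\|$ is immediate from $\varphi(0)=0$ and the bi-Lipschitz property of $\varphi,\varphi^{-1}$.

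First I would reduce the gradient comparison to operator norms of real Jacobians. Regarding $f$ as a map $\R^{2n}\to\R^2$ with real differential $Df(x)$, holomorphy gives $\|Df(x)\|_{\mathrm{op}}=\|\nabla f(x)\|$, so it suffices to compare $\|Df\|_{\mathrm{op}}$ with $\|Dg\comp\varphi\|_{\mathrm{op}}$. In the $\mathcal A$-equivalence case the identity $\phi\comp f=g\comp\varphi$, with $\phi:(\C,0)\to(\C,0)$ bi-Lipschitz, differentiates by the chain rule, at every point where $\varphi$ and $\phi$ are differentiable, to $D\phi(f(x))\,Df(x)=Dg(\varphi(x))\,D\varphi(x)$. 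By Rademacher's theorem $D\phi$ and $D\varphi$ exist almost everywhere and are bounded with bounded inverses (by the Lipschitz constants of $\phi,\phi^{-1},\varphi,\varphi^{-1}$); solving for $Df$ and for $Dg\comp\varphi$ in turn and taking operator norms yields $\|\nabla f(x)\|\sim\|\nabla g(\varphi(x))\|$ for almost every $x$. Since both sides are continuous on the punctured neighbourhood, the equivalence extends to every $x$.

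The $\mathcal K^*$-equivalence case is the same with one extra term to dispose of. Here $A(x)f(x)=g(\varphi(x))$ with $A(x)\in\C^*$ and $A,A^{-1}$ Lipschitz, so $|A(x)|\sim1$; differentiating gives $A(x)\,Df(x)+f(x)\,DA(x)=Dg(\varphi(x))\,D\varphi(x)$ almost everywhere. Passing to operator norms and using boundedness of $DA$, of $D\varphi$ (and its inverse) and $|A|\sim1$, I obtain $\|\nabla f(x)\|\lesssim\|\nabla g(\varphi(x))\|+|f(x)|$ and, symmetrically via $f=A^{-1}(g\comp\varphi)$, $\|\nabla g(\varphi(x))\|\lesssim\|\nabla f(x)\|+|f(x)|$. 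To remove the term $|f(x)|$ I invoke the {\L}ojasiewicz gradient inequality for the isolated singularity $g$, namely $|g|^{\theta}\lesssim\|\nabla g\|$ with $\theta\in(0,1)$, whence $|f(x)|\sim|g(\varphi(x))|\lesssim\|\nabla g(\varphi(x))\|$ near $0$ (and symmetrically for $f$). This absorbs the extra term and gives $\|\nabla f(x)\|\sim\|\nabla g(\varphi(x))\|$, first almost everywhere and then everywhere by continuity, completing the reduction.

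The main obstacle is the legitimacy of differentiating an identity whose coordinate and target changes are only Lipschitz: I must verify that the chain rule holds off a set of measure zero. The point where care is needed is the composition $\phi\comp f$ (resp.\ $g\comp\varphi$): $\phi$ is differentiable off a null set $N\subseteq\C$, and since $f$ is a holomorphic submersion away from the origin, the preimage $f^{-1}(N)$ is again null, so $\phi\comp f$ is differentiable almost everywhere and the chain rule applies there; the same reasoning covers the points where $\varphi$ or $A$ fail to be differentiable. The subsequent passage from \emph{almost everywhere} to \emph{everywhere} rests solely on the continuity of the holomorphic gradients, and the only other delicate step, the disposal of $|f(x)|$ in the $\mathcal K^*$-case, is exactly what the gradient inequality supplies.
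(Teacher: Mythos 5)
Your proposal is correct and follows essentially the same route as the paper: Rademacher's theorem plus the chain rule and the boundedness (with bounded inverses) of the almost-everywhere derivatives of the bi-Lipschitz changes give $\|\nabla g(\varphi(x))\|\lesssim\|\nabla f(x)\|$ a.e., hence everywhere by continuity, and the exponent comparison is then formal via $\|x\|\sim\|\varphi(x)\|$. The only (harmless) divergence is in the $\mathcal K^*$ case: you absorb the extra term $|f(x)|$ via the classical gradient inequality $|g|^{\theta}\lesssim\|\nabla g\|$ with $\theta<1$, whereas the paper uses the equally standard bound $|f(x)|\lesssim\|x\|\,\|\nabla f(x)\|$.
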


\begin{proof}
By symmetry, it is enough to show $\LL_0(\nabla f)\leq \LL_0(\nabla g)$.
Let us consider a bi-Lipschitz homeomorphism
$\varphi:(\C^n,0)\to(\C^n,0)$ and a
bi-Lipschitz homeomorphism $\phi:(\C,0)\to(\C,0)$
so that $g(\varphi(x))=\phi(f(x))$,
for all $x$ belonging to some open neighbourhood of $0\in\C^n$.
By Rademacher's theorem
(see for instance \cite[Theorem 5.1.11]{GIT}),
the partial derivatives of $\varphi$ and $\varphi^{-1}$
exist in some open neighbourhood of $0\in\C^n$ except in a thin set. The bi-Lipschitz property
implies that $\varphi$ and $\varphi^{-1}$ are bounded. Then we conclude that
\begin{align}
\|\nabla g(\varphi(x))\|\lesssim
\|\nabla g(\varphi(x))D\varphi(x)\|
=\|D\phi(f(x))\nabla f(x)\|\lesssim\|\nabla f(x)\|
\end{align}
almost everywhere.
By continuity, we have $\|\nabla g(\varphi(x))\|\lesssim\|\nabla f(x)\|$ near $0$.
If $\|x\|^\theta\lesssim\|\nabla g(x)\|$, then
$$
\|x\|^\theta
\sim\|\varphi(x)\|^\theta
\lesssim\|\nabla g(\varphi(x))\|
\lesssim\|\nabla f(x)\|
$$
and we obtain $\LL_0(\nabla f)\leq\LL_0(\nabla g)$.

The proof for $\mathcal K^*$-equivalence is similar.
Let $A:(\C^n,0)\to \C^*$ be a Lipschitz map such that the map $A^{-1}:(\C^n,0)\to \C^*$ defined by
$A^{-1}(x)=A(x)^{-1}$ is Lipschitz and $g(\varphi(x))=A(x)f(x)$, for all $x$ belonging
to some open neighbourhood of the origin. Then we obtain that
\begin{align*}
\|\nabla g(\varphi(x))\|
\lesssim&\|\nabla g(\varphi(x))D\varphi(x)\|
&&(\textrm{since $\varphi^{-1}$ is Lipschitz})\\
=&\|\nabla A(x)f(x)+A(x)\nabla f(x)\|
&&(\textrm{since }g(\varphi(x))=A(x)f(x))\\
\le&\|\nabla A(x)\||f(x)|+|A(x)|\|\nabla f(x)\|\\
\lesssim&|f(x)|+\|\nabla f(x)\|
&&(\textrm{since $A(x)$ is Lipschitz})\\
\lesssim&\|x\|\|\nabla f(x)\|+\|\nabla f(x)\|
&&(\textrm{since }|f(x)|\lesssim\|x\|\|\nabla f(x)\|)\\
\lesssim&\|\nabla f(x)\|,
\end{align*}
almost everywhere and we conclude that $\LL_0(\nabla f)\leq\LL_0(\nabla g)$.
\end{proof}

\begin{thm}\label{BLinv}
Let $I$ and $J$ be ideals of $\O_n$ such that $I$ and $J$ are bi-Lipschitz equivalent.
Then $\ord(I)=\ord(J)$, and $\LL_0(I)=\LL_0(J)$ if $I$ and $J$ have finite colength.
\end{thm}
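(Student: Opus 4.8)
The plan is to reduce everything to a comparison of the ``size functions'' of reductions of $I$ and $J$, and to show that both $\ord$ and $\LL_0$ are read off intrinsically from such a size function. Let $f_1,\dots,f_p$ and $g_1,\dots,g_q$ together with the bi-Lipschitz homeomorphism $\varphi$ be the data furnished by Definition \ref{BLideals}, and set $F(x)=\|(f_1(x),\dots,f_p(x))\|$ and $G(y)=\|(g_1(y),\dots,g_q(y))\|$, so that $F(x)\sim G(\varphi(x))$ near $0$ by condition (c). First I would record that, since $\varphi(0)=0$ and both $\varphi$ and $\varphi^{-1}$ are Lipschitz, one has $\|\varphi(x)\|\sim\|x\|$ near $0$; consequently $\|\varphi(x)\|^{\alpha}\sim\|x\|^{\alpha}$ for every $\alpha\geq 0$, and $\varphi$ carries each neighbourhood of $0$ onto a neighbourhood of $0$.

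The key lemma I would establish is that $F$ determines $\ord(I)$ and $\LL_0(I)$. For the order, I claim that for every integer $r\geq 1$ one has $I\subseteq\m^r$ if and only if $F(x)\lesssim\|x\|^r$ near $0$. Indeed $F(x)\lesssim\|x\|^r$ is equivalent to $|f_i(x)|\lesssim\|x\|^r$ for all $i$, and since $\|x\|^r\sim\|m(x)\|$ for a generating system $m$ of $\m^r$, the analytic characterization of integral closure of Lejeune--Teissier \cite{LT} turns this into $f_i\in\overline{\m^r}$ for all $i$; because $\m^r$ is integrally closed in $\O_n$ and $\overline{\langle f_1,\dots,f_p\rangle}=\overline I$ by (a), the latter is in turn equivalent to $I\subseteq\m^r$. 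Hence $\ord(I)=\max\{r\in\Z_{\geq 1}:F(x)\lesssim\|x\|^r\ \text{near }0\}$, and $\ord(J)$ is the same expression with $G$ in place of $F$. For the Łojasiewicz exponent, the same equality $\overline{\langle f_1,\dots,f_p\rangle}=\overline I$ forces $F$ to be equivalent to the size function of any generating system of $I$, so by definition $\LL_0(I)=\inf\{\alpha\geq 0:\|x\|^{\alpha}\lesssim F(x)\ \text{near }0\}$, and likewise $\LL_0(J)$ is given by the corresponding set attached to $G$; here the finite-colength hypothesis is precisely what makes these infima finite.

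It then remains to transport these descriptions through $\varphi$. For the order, using $F\sim G\comp\varphi$ and $\|x\|^{r}\sim\|\varphi(x)\|^{r}$ and the substitution $y=\varphi(x)$, the condition $F(x)\lesssim\|x\|^r$ near $0$ is equivalent to $G(y)\lesssim\|y\|^r$ near $0$; taking the largest such integer $r$ yields $\ord(I)=\ord(J)$, with no colength assumption used. For the Łojasiewicz exponent I would argue identically with the inequality reversed: via $F\sim G\comp\varphi$, $\|x\|^{\alpha}\sim\|\varphi(x)\|^{\alpha}$ and $y=\varphi(x)$, the inequality $\|x\|^{\alpha}\lesssim F(x)$ near $0$ is equivalent to $\|y\|^{\alpha}\lesssim G(y)$ near $0$, so the two defining sets of admissible exponents coincide and their infima agree, giving $\LL_0(I)=\LL_0(J)$.

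The step I expect to be the main obstacle is the intrinsic identification of $\ord(I)$ from the size function $F$, namely the equivalence $I\subseteq\m^r\Leftrightarrow F(x)\lesssim\|x\|^r$: this is where one must simultaneously invoke the Lejeune--Teissier theorem, the integral closedness of $\m^r$ in $\O_n$, and the reduction hypothesis (a) in order to pass freely between $I$, its reduction, and their common integral closure. Everything else is a formal manipulation of the relations $\lesssim$ and $\sim$ under the change of coordinates $\varphi$, relying only on $\|\varphi(x)\|\sim\|x\|$ and on the fact that $\varphi$ preserves neighbourhoods of $0$.
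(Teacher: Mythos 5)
Your proposal is correct and follows essentially the same route as the paper: both arguments reduce $\ord$ to the characterization $\ord(I)=\max\{r:\|f(x)\|\lesssim\|x\|^r\}$ via the Lejeune--Teissier description of integral closure and the integral-closedness of $\m_n^r$, and both transport the defining inequalities for $\ord$ and $\LL_0$ through $\varphi$ using $\|\varphi(x)\|\sim\|x\|$. The only cosmetic difference is that you make the reduction step (replacing $I$ by $\langle f_1,\dots,f_p\rangle$ with the same integral closure) and the two-sided equivalences explicit, where the paper argues one inequality at a time by symmetry.
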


\begin{proof}
Since $I$ and $J$ are bi-Lipschitz equivalent, there exist analytic map germs
$f=(f_1,\dots,f_p):(\C^n,0)\to (\C^p,0)$ and $g=(g_1,\dots,g_q): (\C^n,0)\to (\C^q,0)$ such that
$\overline{I}=\overline{\langle f_1,\dots,f_p\rangle}$ ,
$\overline{J}=\overline{\langle g_1,\dots,g_q\rangle}$
and there exists a bi-Lipschitz homeomorphism
 $\varphi:(\C^n,0)\to(\C^n,0)$ so that
$\|g(\varphi(x))\|\sim\|f(x)\|$ near $0$.
By symmetry, it is enough to show that $\LL_0(I)\leq\LL_0(J)$ and
$\ord(I)\leq\ord(J)$.

Let $\theta\in\R_{\geq 0}$ such that $\|x\|^\theta\lesssim\|g(x)\|$ near $0$. Then
$$
\|x\|^\theta\sim\|\varphi(x)\|^\theta\lesssim\|g(\varphi(x))\|\sim\|f(x)\|
$$
near $0$ and we obtain that $\LL_0(I)\leq\LL_0(J)$.

We remark that
$$
\ord(J)=\max\{s:J\subseteq\m_n^s\}
=\max\{s:J\subseteq\overline{\m_n^s}\}
=\max\{s:\|g(x)\|\lesssim\|x\|^s\,\,\textnormal{near $0$}\}.
$$
If $\|f(x)\|\lesssim\|x\|^s$ near $0$, then we have
$$
\|g(x)\|\sim\|f(\varphi(x))\|\lesssim
\|\varphi(x)\|^s\sim\|x\|^s
$$
near $0$ and we obtain $\ord(I)\leq \ord(J)$.
\end{proof}

To end this section we show a result about the constancy of $\LL_0(\nabla f_t)$ in deformations
of weighted homogeneous functions.

\begin{thm}\label{Lojgradconst}
Let $f:(\C^n,0)\to (\C,0)$ be a weighted homogeneous function of degree $d$ with respect
to $w=(w_1,\dots, w_n)$ with an isolated singularity at the origin. Let $w_0=\min\{w_1,\dots, w_n\}$.
Let us suppose that
\begin{equation}\label{igualtatperaL}
\LL_0(\nabla f)=\min\left\{\mu(f),\frac{d-w_0}{w_0}\right\}.
\end{equation}
Let $f_t:(\C^n,0)\to (\C,0)$ be an analytic deformation of $f$ such that $f_t$ has an isolated singularity at the origin, for
all $t$. If $\mu(f_t)$ is constant, then $\LL_0(\nabla f_t)$ is also constant.
\end{thm}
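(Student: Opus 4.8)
The plan is to trap $\LL_0(\nabla f_t)$ between two quantities that, once $\mu$ is constant and (\ref{igualtatperaL}) is invoked, both equal $\LL_0(\nabla f_0)$. Precisely, I would establish for every $t$ near $0$ the chain
\begin{equation*}
\LL_0(\nabla f_0)\leq\LL_0(\nabla f_t)\leq\min\Big\{\mu(f_t),\tfrac{d-w_0}{w_0}\Big\}=\min\Big\{\mu(f_0),\tfrac{d-w_0}{w_0}\Big\}=\LL_0(\nabla f_0),
\end{equation*}
where the middle equality uses $\mu(f_t)=\mu(f_0)$ and the last is exactly (\ref{igualtatperaL}). Every term is then squeezed to $\LL_0(\nabla f_0)$, which is the asserted constancy.

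For the upper bound I would first check that $\mu$-constancy keeps each $f_t$ semi-weighted homogeneous of degree $d$ with respect to $w$: a monomial of $w$-degree $<d$ is a negative-weight deformation direction and strictly lowers the Milnor number of a weighted homogeneous isolated singularity, so it is excluded by $\mu(f_t)=\mu(f_0)$, while the surviving directions leave $f_t$ with a $w$-degree-$d$ initial form whose isolatedness is an open condition along the $\mu$-constant stratum. Granting this, both numbers in the minimum become genuine upper bounds for $\LL_0(\nabla f_t)$: the inequality $\LL_0(\nabla f_t)\leq\frac{d-w_0}{w_0}$ is \cite[Corollary 4.7]{BE2012} applied fibrewise, and $\LL_0(\nabla f_t)\leq\mu(f_t)$ is the general bound for isolated singularities (the very quantity appearing in (\ref{igualtatperaL})). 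Combined with $\mu(f_t)=\mu(f_0)$ this yields the second inequality of the chain.

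The lower bound $\LL_0(\nabla f_0)\leq\LL_0(\nabla f_t)$ is the heart of the matter and the step I expect to be the main obstacle. Writing $\LL_0(\nabla f)=\inf\{r/s:\m_n^r\subseteq\overline{J(f)^s}\}$ after \cite{LT}, what is needed is the lower semicontinuity of the gradient \L ojasiewicz exponent along the family, which I would derive from the principle of specialization of integral dependence (in the spirit of \cite{Teissier77}): in the family of Jacobian ideals $J(f_t)$, which is flat exactly because $\ell(\O_n/J(f_t))=\mu(f_t)$ is constant, a relation $\m_n^r\subseteq\overline{J(f_t)^s}$ holding for generic $t$ persists at $t=0$, so the set of admissible pairs $(r,s)$ is largest on the central fibre and hence its infimum $\LL_0(\nabla f_0)$ is smallest. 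The delicate issue is that $\mu$-constancy is weaker than the $\mu^*$-constancy under which such specialization statements are usually formulated---indeed the Brian\c con--Speder family (Example \ref{BSexample}) shows that the numbers $\mu^{(i)}$ may jump---so I would have to confirm that flatness of $J(f_t)$ already suffices to specialize the single integral-dependence relation $\m_n^r\subseteq\overline{J(f_t)^s}$, or else replace this step by a direct arc argument showing that an analytic arc realizing the exponent of $f_0$ deforms to arcs realizing at least the same exponent for nearby $f_t$. Once this semicontinuity is secured the chain closes and $\LL_0(\nabla f_t)$ is constant.
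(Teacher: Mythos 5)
Your squeeze is exactly the paper's: its proof establishes the same chain
\[
\min\Bigl\{\mu(f),\tfrac{d-w_0}{w_0}\Bigr\}=\LL_0(\nabla f)\leq\LL_0(\nabla f_t)\leq\min\Bigl\{\mu(f_t),\tfrac{d-w_0}{w_0}\Bigr\}=\min\Bigl\{\mu(f),\tfrac{d-w_0}{w_0}\Bigr\},
\]
with the upper bound coming from semi-weighted homogeneity of $f_t$ together with \cite[Corollary 4.7]{BE2012} and the general bound $\LL_0(\nabla f_t)\leq\mu(f_t)$. Two of your steps, however, are left as sketches where the paper supplies the actual arguments. First, the exclusion of monomials of $w$-degree $<d$ under $\mu$-constancy is not an elementary ``negative directions lower $\mu$'' observation: it is Varchenko's theorem \cite{Var} (proved via mixed Hodge structures), and the paper invokes it precisely there. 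It then pins down $d_w(f_t)=d$ by comparing the constant value $\mu(f_t)$ with the product formula $\prod_i(d_t-w_i)/\prod_i w_i$, and deduces that $f_t$ is semi-weighted homogeneous (i.e.\ that its degree-$d$ initial part has an isolated singularity) from \cite[Theorem 3.3]{BFS}, rather than from an openness-of-the-stratum argument, which as stated is circular (you would be assuming what you want to prove about the initial forms along the family).

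Second, the lower bound $\LL_0(\nabla f)\leq\LL_0(\nabla f_t)$, which you rightly identify as the crux and explicitly do not close, is exactly P\l oski's semicontinuity theorem \cite{Ploski2}, which holds for $\mu$-constant deformations --- $\mu^*$-constancy is not required --- and the paper simply cites it. So the obstacle you anticipate is resolved by a known result, and no appeal to specialization of integral dependence or to flatness of the family of Jacobian ideals is needed. (If you did want to run a specialization-of-integral-dependence argument, the relevant hypothesis is constancy of the Samuel multiplicity $e(J(f_t))$, which here follows from $\mu$-constancy because $J(f_t)$ is a parameter ideal in the Cohen--Macaulay ring $\O_n$, so $e(J(f_t))=\mu(f_t)$; but this is not the route the paper takes.) With \cite{Ploski2} in hand your argument becomes complete and coincides with the paper's.
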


\begin{proof}
By a result of Varchenko \cite{Var} (see also \cite[Proposition 2]{OShea}), the deformation $f_t$ verifies $d_w(f_t)\geq d$, for all $t$, where
$d_w(f_t)$ denotes the degree of $f_t$ with respect to $w$.
Then we have the following:
$$
\frac{(d-w_1)\cdots (d-w_n)}{w_1\cdots w_n}=\mu(f)=\mu(f_t)\geq \frac{(d_t-w_1)\cdots (d_t-w_n)}{w_1\cdots w_n}\geq
\frac{(d-w_1)\cdots (d-w_n)}{w_1\cdots w_n}.
$$
Therefore $d_w(f_t)=d$ and
$$
\mu(f_t)=\frac{(d-w_1)\cdots (d-w_n)}{w_1\cdots w_n}
$$
for all $t$. Consequently $f_t$ is a semi-weighted homogeneous function, for all $t$, by \cite[Theorem 3.3]{BFS} (see also
\cite{FT}). Then, by \cite[Corollary 4.7]{BE2012}, we obtain
$$
\LL_0(\nabla f_t)\leq \frac{d-w_0}{w_0}.
$$
By the lower semi-continuity of \L ojasiewicz exponents in $\mu$-constant deformations
(see \cite{Ploski2}) we have
$$
\min\left\{\mu(f),\frac{d-w_0}{w_0}\right\}=\LL_0(\nabla f)\leq \LL_0(\nabla f_t)\leq \min\left\{\mu(f_t),\frac{d-w_0}{w_0}\right\}=
\min\left\{\mu(f),\frac{d-w_0}{w_0}\right\}.
$$
Then the result follows.
\end{proof}

Since the order of a function can be seen as a \L ojasiewicz exponent, that is $\ord(f)=\LL_{\langle f\rangle}(\m_n)$,
for all $f\in\m_n$, we can consider the previous result as a counterpart of the known results
of O'Shea \cite[p.\,260]{OShea} and Greuel \cite[p.\,164]{Greuel} in the context of \L ojasiewicz exponents of gradient maps.
We remark that in general we always have the inequality $(\leq)$ in (\ref{igualtatperaL}).

\lsection{Log canonical thresholds}\label{LogCanTh}

The purpose of this section is to show in Theorem \ref{lctBL}
that the log canonical threshold $\lct(I)$ is bi-Lipschitz invariant.
We also show Theorem \ref{LCTLojExp}, which enables us to compute $\lct(I)$
in terms of \L ojasiewicz exponents when $\overline I$ is monomial.
We start with a quick survey on log canonical thresholds.
We refer to the survey \cite{MustataIMPANGA} for more information about
the notion of log canonical threshold.

The {\it log canonical threshold} of a function $f:(\C^n,0)\to\C$,
denoted by $\lct(f)$, is the
supremum of those $s$ so that $|f(x)|^{-2s}$ is locally integrable at $0$,
that is, integrable on some compact neighbourhood of $0$.
This definiton is generalized for ideals as follows.
\begin{defn}
Let $I$ be an ideal of $\O_n$. Let us consider a generating system $\{g_1,\dots, g_r\}$
of $I$. The {\it log canonical threshold} of $I$,
denoted by $\lct(I)$, is defined as follows:
$$
\lct(I)=\sup\{s\in\R_{\geq 0}:
\bigl(|g_1(x)|^{2}+\cdots+|g_r(x)|^2\bigr)^{-s}\ \textrm{ is locally
 integrable at $0$}\}.
$$
It is straightforward to see that this definition does not depend on the
choice of generating systems of $I$.
The {\it Arnold index} of $I$,
denoted by $\mu(I)$, is defined as $\mu(I)=\frac{1}{\lct(I)}$
(see for instance \cite{dFEM1}).
\end{defn}

One origin of the notion of log canonical threshold
comes back to analysis on complex powers as
generalized functions. M.\,Atiyah (\cite{Atiyah})
showed a way to compute (candidate) poles of complex powers
using resolution of singularities.
This leads to the following well-known result.

\begin{thm}
Let $\pi:M\to\C^n$ be a proper modification so that
$(\pi^*I)_0=\sum_im_iD_i$ where $D_i$ form a family of normal crossing
divisors.
Then
$$
\lct(I)=\min_i\Bigl\{\frac{k_i+1}{m_i}\Bigr\}\qquad\textrm{
where $K_M=\sum_ik_iD_i$ is the canonical divisor of $M$.}
$$
\end{thm}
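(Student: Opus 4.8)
The plan is to compute $\lct(I)$ by pulling the defining integral back through the log resolution $\pi$ and reducing the question to a one-variable integrability criterion. Fix a generating system $g_1,\dots,g_r$ of $I$ and set $F=|g_1|^2+\cdots+|g_r|^2$, so that $\lct(I)=\sup\{s\geq 0: F^{-s}\text{ is locally integrable at }0\}$. Since $\pi$ is a proper modification, it restricts to a biholomorphism away from a nowhere-dense analytic subset, hence away from a set of measure zero; therefore, for a small compact neighbourhood $U$ of $0$, the integral $\int_U F^{-s}\,dV$ and its pullback $\int_{\pi^{-1}(U)}(F\circ\pi)^{-s}\,\pi^*dV$ converge or diverge together. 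The first step is thus to transport the integrability question upstairs to $M$, where $\pi^*I$ has normal crossing support.

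Next I would carry out the local computation on $M$. Fix a point $p\in\pi^{-1}(0)$ and choose coordinates $t=(t_1,\dots,t_n)$ centred at $p$ so that the components $D_i$ through $p$ are coordinate hyperplanes; after relabelling, say these are $D_{i_1},\dots,D_{i_k}$ given by $\{t_1=0\},\dots,\{t_k=0\}$. Because $\pi^*I$ is locally principal with $(\pi^*I)_0=\sum_i m_iD_i$, near $p$ the ideal $\pi^*I$ is generated by the monomial $t_1^{m_{i_1}}\cdots t_k^{m_{i_k}}$, whence $F\circ\pi\sim|t_1|^{2m_{i_1}}\cdots|t_k|^{2m_{i_k}}$ near $p$. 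Likewise, writing the complex Jacobian determinant of $\pi$ as $J_\pi$, the condition $K_M=\sum_i k_iD_i$ gives $|J_\pi|\sim|t_1|^{k_{i_1}}\cdots|t_k|^{k_{i_k}}$ near $p$ (the pullback of the standard volume form on $\C^n$ being trivial in this chart). Since $\pi$ is holomorphic, the real volume form transforms by $\pi^*dV=|J_\pi|^2\,dV_M$, so the pulled-back integrand satisfies
$$
(F\circ\pi)^{-s}\,|J_\pi|^2\sim\prod_{j=1}^k|t_j|^{2(k_{i_j}-s\,m_{i_j})}\quad\text{near }p.
$$

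Finally I would invoke the elementary fact that $\int_{|t|<\epsilon}|t|^{2a}\,dA(t)$ converges if and only if $a>-1$. By Fubini, the displayed integrand is locally integrable at $p$ precisely when $k_{i_j}-s\,m_{i_j}>-1$ for every $j$, that is, when $s<(k_{i_j}+1)/m_{i_j}$ for all $D_{i_j}$ through $p$. Covering the compact set $\pi^{-1}(0)$ by finitely many such charts and taking the most restrictive bound, local integrability at $0$ holds exactly for $s<\min_i\{(k_i+1)/m_i\}$, the minimum running over the components with $m_i>0$ (components with $m_i=0$ impose no constraint), which yields $\lct(I)=\min_i\{(k_i+1)/m_i\}$. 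The main obstacle is the comparability $F\circ\pi\sim\prod_j|t_j|^{2m_{i_j}}$: it rests on $\pi$ being a genuine log resolution, so that $\pi^*I$ is not merely contained in but equal (up to a nonvanishing factor) to the principal monomial ideal $(t_1^{m_{i_1}}\cdots t_k^{m_{i_k}})$; concretely, each $g_\ell\circ\pi=t_1^{m_{i_1}}\cdots t_k^{m_{i_k}}u_\ell$ with the $u_\ell$ having no common zero at $p$, so $\sum_\ell|u_\ell|^2$ is bounded away from $0$ and $\infty$ near $p$. Once this is established, the normal-crossing bookkeeping across charts is routine.
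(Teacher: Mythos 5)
Your argument is correct and follows exactly the route the paper indicates: it pulls the integral defining $\lct(I)$ back through the modification (legitimate since $\pi$ is a biholomorphism off a measure-zero set), reduces to monomial data in normal crossing charts with the Jacobian contributing $|t_j|^{2k_{i_j}}$, and applies the one-variable criterion $\int_{|t|<\epsilon}|t|^{2a}\,dA<\infty\iff a>-1$, which is precisely the ``observation'' the paper cites as the basis of the proof. You have simply written out in full the standard argument the paper leaves as a sketch, including the correct handling of the comparability $F\circ\pi\sim\prod_j|t_j|^{2m_{i_j}}$ and of components with $m_i=0$.
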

The proof is based on the following observation:
$$
\int_{\|x\|\le\varepsilon}
{|x_1^{m_1}\cdots x_n^{m_n}|^{-2s}}|x_1^{k_1}\cdots x_n^{k_n}|^2
\frac{dx\wedge d\bar{x}}{\sqrt{-1}^n}<\infty
\quad \Longleftrightarrow\quad
m_is<k_i+1, \textrm{\,for all $i$}.
$$

If $I\subseteq\m_n^r$, then
$$
\lct(I)\leq\lct(\m_n^r)\leq\frac{\lct(\m_n)}r=\frac{n}r
$$
by \cite[Property 1.14]{MustataIMPANGA}.
As a consequence, we conclude that $\lct(I)\ord(I)\leq n$.
Combining this with
{\cite[Property 1.18]{MustataIMPANGA}}, we have
$$
\frac1{\ord(I)}\leq\lct(I)\leq\frac{n}{\ord(I)}.
$$

\begin{thm}\label{lctBL}\
\begin{enumerate}
\setlength{\parskip}{0cm} %
\setlength{\itemsep}{0cm} %
\item[(i)]
If two functions $f$ and $g$ of $\O_n$ are bi-Lipschitz $\mathcal K$-equivalent,
then $\lct(f)=\lct(g)$.
\item[(ii)]
If two ideals $I$ and $J$ of $\O_n$
are bi-Lipschitz equivalent, then $\lct(I)=\lct(J)$.
\end{enumerate}
\end{thm}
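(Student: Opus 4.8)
My plan is to reduce both parts to a single change-of-variables lemma for bi-Lipschitz maps, after observing that the log canonical threshold is controlled only by the germ of the norm of a generating map, up to the relation $\sim$. For a germ $\rho:(\C^n,0)\to\R_{\geq 0}$ write
$$
\Lambda(\rho)=\sup\bigl\{s\in\R_{\geq 0}: \rho^{-2s}\ \textrm{is locally integrable at }0\bigr\}.
$$
If $a=(a_1,\dots,a_r)$ generates an ideal $I$, then $|a_1|^2+\cdots+|a_r|^2=\|a\|^2$, so $\lct(I)=\Lambda(\|a\|)$. Two facts about $\Lambda$ follow at once: first, $\Lambda(\rho)=\Lambda(\rho')$ whenever $\rho\sim\rho'$ near $0$, since then $\rho^{-2s}$ and $(\rho')^{-2s}$ are simultaneously locally integrable; second, $\lct(I)=\lct(\overline I)$, because if $\overline{\langle f_1,\dots,f_p\rangle}=\overline I$ then the analytic description of integral closure of Lejeune--Teissier \cite{LT} gives $\|(f_1,\dots,f_p)(x)\|\sim\|a(x)\|$ near $0$, whence $\Lambda(\|(f_1,\dots,f_p)\|)=\Lambda(\|a\|)$.

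Next I would prove the change-of-variables lemma: for a bi-Lipschitz homeomorphism $\varphi:(\C^n,0)\to(\C^n,0)$ and any germ $\rho\geq 0$ one has $\Lambda(\rho\comp\varphi)=\Lambda(\rho)$. The point is that a bi-Lipschitz map distorts Lebesgue measure by at most a bounded factor: if $L$ is a common Lipschitz constant for $\varphi$ and $\varphi^{-1}$, then $L^{-2n}|E|\leq|\varphi(E)|\leq L^{2n}|E|$ for every measurable $E\subseteq\C^n\cong\R^{2n}$, since an $L$-Lipschitz self-map of $\R^{2n}$ multiplies $2n$-dimensional measure by at most $L^{2n}$. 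Combining this with Cavalieri's principle, for every measurable $h\geq 0$ and every compact neighbourhood $K$ of $0$,
\begin{align*}
\int_K h(\varphi(x))\,dx&=\int_0^\infty\bigl|\{x\in K: h(\varphi(x))>\lambda\}\bigr|\,d\lambda\\
&\sim\int_0^\infty\bigl|\{y\in\varphi(K): h(y)>\lambda\}\bigr|\,d\lambda=\int_{\varphi(K)}h(y)\,dy,
\end{align*}
using $\{x\in K: h(\varphi(x))>\lambda\}=\varphi^{-1}(\{y\in\varphi(K): h(y)>\lambda\})$ together with the measure comparison. Thus the two integrals are simultaneously finite; since $\varphi$ is a homeomorphism fixing $0$, so that $K$ and $\varphi(K)$ are both compact neighbourhoods of $0$, applying this to $h=\rho^{-2s}$ gives $\Lambda(\rho\comp\varphi)=\Lambda(\rho)$.

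With these in hand the two statements follow quickly. For part (ii), Definition \ref{BLideals} provides tuples $f=(f_1,\dots,f_p)$ and $g=(g_1,\dots,g_q)$ with $\overline{\langle f_1,\dots,f_p\rangle}=\overline I$, $\overline{\langle g_1,\dots,g_q\rangle}=\overline J$, and a bi-Lipschitz $\varphi$ with $\|f(x)\|\sim\|g(\varphi(x))\|$ near $0$; hence
$$
\lct(I)=\Lambda(\|f\|)=\Lambda(\|g\comp\varphi\|)=\Lambda(\|g\|)=\lct(J).
$$
For part (i) I use the weaker notion of bi-Lipschitz $\mathcal K$-equivalence, namely a bi-Lipschitz $\varphi$ together with a family of homeomorphisms $\phi_x:(\C,0)\to(\C,0)$ with $\phi_x(0)=0$ that is \emph{uniformly} bi-Lipschitz in $x$ and satisfies $g(\varphi(x))=\phi_x(f(x))$ near $0$. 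Uniform bi-Lipschitz bounds yield constants $0<c\leq C$ with $c\,|y|\leq|\phi_x(y)|\leq C\,|y|$ for all $x,y$ near $0$; putting $y=f(x)$ gives $|g(\varphi(x))|=|\phi_x(f(x))|\sim|f(x)|$, so that
$$
\lct(f)=\Lambda(|f|)=\Lambda(|g\comp\varphi|)=\Lambda(|g|)=\lct(g).
$$

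The main obstacle is the change-of-variables lemma: because $\varphi$ is merely bi-Lipschitz, there is no classical Jacobian substitution available, and one must argue through the measure-distortion estimate together with Cavalieri's principle (equivalently, through Rademacher's theorem and the area formula, as in the proof of Theorem \ref{BLInvFunc}). In part (i) the parallel subtlety is that the fibre maps $\phi_x$ must be \emph{uniformly} bi-Lipschitz in $x$ in order to compare $|\phi_x(f(x))|$ with $|f(x)|$ by constants independent of $x$; this is precisely why the weaker version of $\mathcal K$-equivalence suffices, while no joint Lipschitz control of $(x,y)\mapsto(\varphi(x),\phi_x(y))$ is required.
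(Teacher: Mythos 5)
Your proposal is correct and follows the same overall strategy as the paper: reduce everything to the observation that $\lct$ depends only on the local integrability of $\|f\|^{-2s}$, note that this is unchanged when $\|f\|$ is replaced by anything comparable to it in the sense of $\sim$, and then show that a bi-Lipschitz substitution preserves local integrability. The one place you genuinely diverge is in how that last step is justified: the paper invokes Rademacher's theorem to get an almost-everywhere Jacobian with $|J(\varphi)|\lesssim 1$ and then applies the change-of-variables formula, whereas you avoid differentiability altogether by bounding the distortion of Lebesgue measure ($L^{-2n}|E|\leq|\varphi(E)|\leq L^{2n}|E|$) and passing through Cavalieri's principle; your route is more elementary and sidesteps the (real but routine) issue of justifying the area formula for merely Lipschitz changes of variable, while the paper's route is shorter on the page. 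Two further points in your favour: you make explicit the reduction $\lct(I)=\lct(\overline I)$ via the Lejeune--Teissier characterization of integral closure, which the paper uses silently when it computes with a reduction of $I$ rather than with $I$ itself; and in part (i) you correctly isolate the uniformity in $x$ of the bi-Lipschitz constants of $\phi_x$ as the property actually needed --- this uniformity is automatic when the full map $(x,y)\mapsto(\varphi(x),\phi_x(y))$ is bi-Lipschitz as in the theorem's hypothesis, but is genuinely an extra assumption in the ``weaker'' version of $\mathcal K$-equivalence that the paper's remark alludes to.
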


\begin{proof}
(i): Assume that we have $g(\varphi(x))=\phi_x(f(x))$, for all $x$ belonging to some open neighbourhood of $0\in\C^n$, for
a bi-Lipschitz homeomorphism
$\varphi:(\C^n,0)\to(\C^n,0)$, $x\mapsto x'=\varphi(x)$,
and bi-Lipschitz homeomorphisms
$\phi_x:(\C,0)\to(\C,0)$, $y\mapsto y'=\phi_x(y)$.
By Rademacher's theorem (see \cite[Theorem 5.1.11]{GIT}),
$\varphi$ is differentiable almost everywhere in
the sense of Lebesgue measure, and its jacobian $J(\varphi)$ is measurable.
By Lipschitz property, we have $|J(\varphi)|\lesssim1$ and
$|\phi_x(y)|\sim|y|$. So we have
\begin{align*}
\int_{\varphi(K)}|g(x')|^{-2s}\frac{dx'\wedge d\bar{x}'}{\sqrt{-1}^n}
=&\int_{K}|g(\varphi(x))|^{-2s}|J(\varphi)|
\frac{dx\wedge d\bar{x}}{\sqrt{-1}^n}\\
\lesssim&\int_K|\phi_x(f(x))|^{-2s}\frac{dx\wedge d\bar{x}}{\sqrt{-1}^n}\\
\lesssim&\int_K|f(x)|^{-2s}\frac{dx\wedge d\bar{x}}{\sqrt{-1}^n}
\end{align*}
where $K$ is a compact neighbourhod of $0$.
This implies $\lct(f)\leq\lct(g)$ and vice versa. \\
(ii):
Choose $f=(f_1,\dots,f_p)$ and $g=(g_1,\dots,g_q)$
so that
$\overline{I}=\overline{\langle f_1,\dots,f_p\rangle}$,
$\overline{J}=\overline{\langle g_1,\dots,g_q\rangle}$ and
$\|f(x)\|\sim\|g(\varphi(x))\|$ where
$\varphi:(\C^n,0)\to(\C^n,0)$
is a bi-Lipschitz homeomorphism.
We have
$$
\int_{\varphi(K)}\|g(x')\|^{-2s}\frac{dx'\wedge d\bar{x}'}{\sqrt{-1}^n}
=
\int_K
\|g(\varphi(x))\|^{-2s}|J(\varphi)|\frac{dx\wedge d\bar{x}}{\sqrt{-1}^n}
\lesssim\int_K\|f(x)\|^{-2s}\frac{dx\wedge d\bar{x}}{\sqrt{-1}^n}
$$
where $K$ is a compact neighbourhod of $0$.
This implies $\lct(I)\leq\lct(J)$ and vice versa.
\end{proof}

\begin{thm}\label{LCTLojExp}
Let $I$ be an ideal of $\O_n$ such that $V(I)\subseteq V(x_1\cdots x_n)$.
We have
\begin{equation}\label{lctandL}
1\leq{\lct(I)}\mathcal L_{x_1\cdots x_n}(I)
\end{equation}
and equality holds when $\overline{I}$ is a monomial ideal.
\end{thm}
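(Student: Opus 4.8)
The plan is to deduce the inequality from the Lejeune--Teissier algebraic description of the \L ojasiewicz exponent together with three standard properties of the log canonical threshold, and then to obtain the equality in the monomial case by reading both sides off the Newton polyhedron of $\overline I$. Throughout I write $J=\langle x_1\cdots x_n\rangle$, so that $\LL_{x_1\cdots x_n}(I)=\LL_J(I)$, and recall that the hypothesis $V(I)\subseteq V(J)$ guarantees this exponent is finite. To prove $1\leq\lct(I)\LL_J(I)$, I would start from the identity
$$
\LL_J(I)=\inf\Bigl\{\tfrac rs:r,s\in\Z_{\geq1},\ J^r\subseteq\overline{I^s}\Bigr\},
$$
valid by \cite[Th\'eor\`eme 7.2]{LT}. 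Fix an admissible pair $(r,s)$, so $J^r\subseteq\overline{I^s}$. Since the log canonical threshold is monotone under inclusion of ideals and invariant under integral closure, one gets $\lct(J^r)\leq\lct(\overline{I^s})=\lct(I^s)$. Using the homogeneity $\lct(\mathfrak a^k)=\frac1k\lct(\mathfrak a)$ from \cite[Property 1.14]{MustataIMPANGA}, together with the fact that $x_1\cdots x_n$ defines a normal crossing divisor, so that $\lct(J)=1$, this becomes $\frac1r\leq\frac{\lct(I)}{s}$, that is $\frac rs\geq\frac1{\lct(I)}$. Taking the infimum over all admissible $(r,s)$ yields $\LL_J(I)\geq 1/\lct(I)$, which is exactly (\ref{lctandL}).

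For the equality when $\overline I$ is monomial, I first note that both $\lct$ and $\LL_J$ depend only on $\overline I$, so I may assume $I$ itself monomial. Put $\mathbf 1=(1,\dots,1)$ and $t_0=\min\{t>0:t\,\mathbf 1\in\Gamma_+(I)\}$. Since $\Gamma_+(J^r)=r\mathbf 1+\R^n_{\geq0}$, $\Gamma_+(\overline{I^s})=s\,\Gamma_+(I)$, and $\Gamma_+(I)$ is upward closed, the inclusion $J^r\subseteq\overline{I^s}$ holds if and only if $\frac rs\mathbf 1\in\Gamma_+(I)$, i.e. $\frac rs\geq t_0$. Hence the infimum above is realised exactly along the diagonal ray and $\LL_J(I)=t_0$. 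On the other hand, Howald's formula for the log canonical threshold of a monomial ideal (see \cite{MustataIMPANGA}) gives $\lct(I)=1/t_0$. Multiplying the two expressions yields $\lct(I)\LL_J(I)=1$.

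The genuinely delicate points are twofold. First, one must verify the three invariance properties of $\lct$ used above in the present analytic-germ setting: independence of the generating system, the homogeneity $\lct(\mathfrak a^k)=\frac1k\lct(\mathfrak a)$, and invariance under integral closure, all of which are available from \cite[\S1]{MustataIMPANGA}. Second, one must identify $t_0$ as the exact reciprocal of $\lct(I)$, which is where the correct normalisation of Howald's formula is essential. The conceptual reason equality holds precisely for monomial $\overline I$ is transparent from a log resolution $\pi:M\to\C^n$: if $(\pi^*I)_0=\sum_im_iD_i$, $(\pi^*J)_0=\sum_ib_iD_i$ and $K_{M/\C^n}=\sum_ik_iD_i$, then $\lct(I)=\min_i\frac{k_i+1}{m_i}$ while $\LL_J(I)=\max_i\frac{m_i}{b_i}$, and $\lct(J)=1$ is equivalent to $k_i+1\geq b_i$ for all $i$. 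The product is then always $\geq1$, and it equals $1$ exactly when a single divisor realises both extrema with $k_i+1=b_i$ --- which is what the toric resolution of a monomial ideal provides. Making this resolution viewpoint fully rigorous is the main obstacle, but it is bypassed entirely for the inequality itself by the Lejeune--Teissier argument of the first paragraph.
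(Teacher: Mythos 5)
Your proof is correct, but it reaches the conclusion by a genuinely different route on both halves. For the inequality, the paper works directly with the analytic definitions: if $|x_1\cdots x_n|^\theta\lesssim\|f(x)\|$ near $0$, then $\int_K\|f\|^{-2s}\lesssim\int_K|x_1\cdots x_n|^{-2s\theta}$, which is finite whenever $s\theta<1$; this gives $1/\LL_{x_1\cdots x_n}(I)\leq\lct(I)$ in three lines using nothing but the definition of $\lct$. You instead route the argument through the Lejeune--Teissier identity $\LL_J(I)=\inf\{r/s:J^r\subseteq\overline{I^s}\}$ combined with monotonicity, integral-closure invariance and the homogeneity $\lct(\mathfrak a^k)=\tfrac1k\lct(\mathfrak a)$ of the log canonical threshold. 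This is valid and purely ideal-theoretic, but it imports more machinery, and one small point deserves a word: the paper quotes the Lejeune--Teissier identity only for $I$ of finite colength, whereas here $I$ is merely assumed to satisfy $V(I)\subseteq V(x_1\cdots x_n)$; the identity does hold in this generality (it follows from the metric characterization of integral closure in \cite{LT}), but you should say so rather than use it silently. For the equality, the paper computes both invariants on a common toric log resolution subordinate to $\Gamma_+(I)$, obtaining $\LL_{x_1\cdots x_n}(I)=\max_a\{\ell(a,I)/\sum_ia_i\}=1/\lct(I)$, while you read both off the Newton polyhedron along the diagonal ray, quoting Howald's formula $\lct(I)=1/t_0$ with $t_0=\min\{t>0:t\,\mathbf 1\in\Gamma_+(I)\}$ and checking $J^r\subseteq\overline{I^s}\Leftrightarrow\tfrac rs\mathbf 1\in\Gamma_+(I)$. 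These are the same computation in different clothing --- Howald's formula is itself proved by the toric resolution the paper writes out --- so your version is shorter but less self-contained; your normalizations are correct.
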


\begin{proof}
Let us consider an analytic map germ $f=(f_1,\dots, f_p):(\C^n,0)\to (\C^p,0)$ such that
$I=\langle f_1,\dots, f_p\rangle$.
Let $\theta\in\R_{\geq 0}$ such that $|x_1\dots x_n|^\theta\lesssim\|f(x)\|$. If $s\geq 0$ then
$$
\int_K\|f(x)\|^{-2s}\frac{dx\wedge d\bar{x}}{\sqrt{-1}^n}
\lesssim
\int_K|x_1\cdots x_n|^{-2s\theta}\frac{dx\wedge d\bar{x}}{\sqrt{-1}^n}.
$$
Thus $s<\lct(I)$ whenever $s\theta<1$.
This implies that $1/\mathcal L_{x_1\cdots x_n}(I)\leq \lct(I)$.

If $\overline{I}$ is monomial, then we consider the toric modification
$\pi:M\to\C^n$ corresponding to a regular subdivision of $\Gamma_+(I)$.
Let $a$ denote a primitive vector which generate a 1-cone of this
 regular fan.
Then the order of $|x_1\cdots x_n|^\theta\comp\pi$ is 
$\sum_{i=1}^n a_i\theta=(k_a+1)\theta$ along the
exceptional divisor corresponding to $a$, where
$k_a$ denotes the multiplicity of the canonical divisor along the component corresponding to $a$.
The order of $|f\comp\pi|$ is $\ell(a,I)$ along the
 exceptional divisor corresponding to $a$.
So we have
$$
\mathcal L_{x_1\cdots x_n}(I)
=\max\Bigl\{\frac{\ell(a,I)}{\sum_i{a_i}}
\Bigr\}
=\frac1{\lct(I)},
$$
where the maximum is taken over those $a$ which correspond to the
components of the exceptional divisor of $\pi$.
\end{proof}

The previous result is motivated by \cite[Example 5]{Howald}.

\begin{ex}
Let us consider the ideal $I=\langle x+y,xy\rangle$ of $\C[[x,y]]$.
Then $\mathcal L_{xy}(I)=1$ and $\lct(I)=3/2$.
We remark that $\overline{I}=\langle x+y\rangle+\langle x,y\rangle^2$. Hence this example shows that,
in general, equality does not hold in (\ref{lctandL}).
\end{ex}

\begin{prop}
Let $I$ and $J$ be ideals of $\O_n$ such that $V(J)\subseteq V(I)$. Then
\begin{equation}\label{lctIJ}
\lct(I)\leq\mathcal L_I(J)\lct(J).
\end{equation}
\end{prop}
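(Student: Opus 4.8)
The plan is to mimic the integrability comparison used in the first half of the proof of Theorem \ref{LCTLojExp}, now letting a general ideal $J$ play the role that $\langle x_1\cdots x_n\rangle$ played there. First I would fix generating systems $f=(f_1,\dots,f_p)$ of $I$ and $g=(g_1,\dots,g_q)$ of $J$, and note that the hypothesis $V(J)\subseteq V(I)$ is precisely what guarantees $\mathcal L_I(J)<\infty$. By the very definition of the \L ojasiewicz exponent $\mathcal L_I(J)$ (with subscript $I$ and main ideal $J$), for every $\alpha>\mathcal L_I(J)$ we have $\|f(x)\|^\alpha\lesssim\|g(x)\|$ near $0$.

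Next I would convert this into a pointwise comparison of the integrands defining the two log canonical thresholds. Since all quantities are small near $0$, the inequality $\|f(x)\|^\alpha\leq C\|g(x)\|$ gives $\|g(x)\|^{-1}\leq C\|f(x)\|^{-\alpha}$, hence for every $s\geq 0$
$$
\|g(x)\|^{-2s}\lesssim\|f(x)\|^{-2\alpha s}\qquad\textrm{near }0.
$$
Consequently, whenever $\|f(x)\|^{-2\alpha s}$ is locally integrable at $0$, so is $\|g(x)\|^{-2s}$, by domination. Recall that $\lct(I)$ (resp. $\lct(J)$) is the supremum of those exponents $t$ for which $\|f\|^{-2t}$ (resp. $\|g\|^{-2t}$) is locally integrable, and that this set of exponents is downward closed: after shrinking so that $\|f\|<1$, for $t'<t$ one has $\|f\|^{-2t'}\leq\|f\|^{-2t}$, so integrability for $t$ forces integrability for $t'$. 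Thus any $s$ with $\alpha s<\lct(I)$ makes $\|f\|^{-2\alpha s}$ locally integrable, whence $\|g\|^{-2s}$ is locally integrable and therefore $s\leq\lct(J)$.

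Finally I would let $s$ range over all nonnegative reals with $\alpha s<\lct(I)$ and pass to the supremum, obtaining $\lct(I)/\alpha\leq\lct(J)$, that is $\lct(I)\leq\alpha\,\lct(J)$, for every $\alpha>\mathcal L_I(J)$. Letting $\alpha\to\mathcal L_I(J)^+$ yields the desired inequality (\ref{lctIJ}).

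I expect the only delicate points to be bookkeeping rather than conceptual. The genuinely needed facts are the elementary estimate $\|g\|^{-2s}\lesssim\|f\|^{-2\alpha s}$ (valid because near $0$ both $\|f\|$ and $\|g\|$ are small and bounded by $1$) and the downward closedness of the set of admissible integrability exponents, so that strict inequality $t<\lct$ already guarantees local integrability of $\|\cdot\|^{-2t}$. The degenerate cases, such as $\lct(J)=\infty$ or $\lct(I)=0$, require no separate treatment once the inequalities are read in $[0,\infty]$.
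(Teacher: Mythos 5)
Your proposal is correct and follows essentially the same route as the paper's own proof: compare the integrands via $\|g\|^{-2s}\lesssim\|f\|^{-2\alpha s}$ for $\alpha>\mathcal L_I(J)$, deduce $\lct(I)/\alpha\leq\lct(J)$, and let $\alpha\to\mathcal L_I(J)^+$. The only difference is that you spell out the downward closedness of the integrability exponents and the final limiting step, which the paper leaves implicit.
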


\begin{proof}
Set $I=\langle f_1,\dots,f_r\rangle$ and
$J=\langle g_1,\dots,g_s\rangle$.
If $\|f(x)\|^\theta\lesssim\|g(x)\|$, for some $\theta\in\R_{\geq 0}$ and we fix any $s\geq 0$ then
$$
\int_K\|g(x)\|^{-2s}\frac{dx\wedge d\bar{x}}{\sqrt{-1}^n}
\lesssim
\int_K\|f(x)\|^{-2s\theta}\frac{dx\wedge d\bar{x}}{\sqrt{-1}^n}.
$$
This means that $s\theta<\lct(I)$ implies $s<\lct(J)$, i.e.,
$\lct(I)/\theta\leq\lct(J)$.
We thus obtain that $\lct(I)\le\theta\lct(J)$.
\end{proof}

\begin{rem}
It is natural to ask when the equality holds in \eqref{lctIJ}.
If $I$ and $J$ are monomial ideal of $\O_n$, then we have
$$
\lct(I)=\min_{a\in\R^n_{>0}}\Bigl\{\frac{\sum_ia_i}{\ell(a,I)}\Bigr\}, \quad
\lct(J)=\min_{a\in\R^n_{>0}}\Bigl\{\frac{\sum_ia_i}{\ell(a,J)}\Bigr\}, \quad
\mathcal L_I(J)=\max_{a\in\R^n_{>0}}\Bigl\{\frac{\ell(a,J)}{\ell(a,I)}\Bigr\}.
$$
When the same $a$ attains these minimums and maximum,
we have $\lct(I)=\mathcal L_I(J)\lct(J)$.
\end{rem}

\lsection{Log canonical thresholds of generic sections}\label{LogCanTh2}

\begin{defn}
Let $I$ be an ideal of $\O_n$. For any integer $k\in\{0,1,\dots, n-1\}$ we set
$$
\lct^{(n-k)}(I)=\lct(I|_L),
$$
where $L$ denote a generic $(n-k)$-dimensional linear subspace of
 $\C^n$,
and $I|_L$ denote the restriction of the ideal $I$ to the space $L$.
\end{defn}
By the semicontinuity of the log canonical threshold
(\cite[Corollary 9.5.39]{Laz}),
for every family $\{L_t\}_{t\in U}$ of linear subspaces of dimension $n-k$
with $L_0=L$
there is an open neighborhood $W$ of $0$
such that $\lct(I|_{L_{t}})\geq\lct(I|_{L_0})$ for every $t\in W$.
So $\lct^{(n-k)}(I|_L)$ is well-defined and characterized as maximal
possible one,
despite of the fact that the isomorphism classes of $I|_{L_t}$ may vary
along $t$.

When $L$ is the zero set of $h_1,\dots,h_{k}$, then
$\lct^{(n-k)}(I)$ is the log canonical threshold of the ideal generated by
the image of $I$ in $\mathcal O_n/\langle h_1,\dots,h_k\rangle$.
By Proposition 4.5 of \cite{Mustata2002}
(or Property 1.17 of \cite{MustataIMPANGA}), we have
\begin{equation}\label{chainlct}
\lct^{(1)}(I)\leq \lct^{(2)}(I) \leq \cdots \leq \lct^{(n)}(I)
\end{equation}
We know that $\lct^{(n)}(I)=\lct(I)$ and $\lct^{(1)}(I)=1/\ord(I)$ are
bi-Lipschitz invariant.
So it is natural to ask the following
\begin{ques}
Is $\lct^*(I)=(\lct^{(n)}(I),\,\lct^{(n-1)}(I),\,\dots,\,\lct^{(1)}(I))$
a bi-Lipschitz invariant?
\end{ques}
Theorem \ref{LCTLojExp} has the following analogy for $\lct^{(k)}(I)$.
\begin{thm}\label{lct^kLojExp}
Let $I$ be an ideal of $\O_n$ such that $V(I)\subseteq V(x_1\cdots x_n)$. Then
$$
1-\frac{k}n \leq {\lct^{(n-k)}(I)}\LL_{x_1\cdots x_n}^{(n-k)}(I)
$$
for all $k=0,1,\dots, n-1$.
\end{thm}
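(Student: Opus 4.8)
The plan is to reduce the statement to the case $k=0$, which is exactly Theorem~\ref{LCTLojExp}, but carried out on a generic linear section. Write $L=V(h_1,\dots,h_k)$ for $k$ generically chosen linear forms $h_1,\dots,h_k$, so that $L$ is a generic $(n-k)$-dimensional subspace and, as recalled just before the statement, $\lct^{(n-k)}(I)=\lct(I|_L)$, where $I|_L$ denotes the image of $I$ in $\O_n/\langle h_1,\dots,h_k\rangle\cong\O_{n-k}$. In the same spirit, the generic-section characterization of relative \L ojasiewicz exponents (the analogue for $J=\langle x_1\cdots x_n\rangle$ of Lemma~\ref{genericvalue}) identifies $\LL^{(n-k)}_{x_1\cdots x_n}(I)$ with $\LL_{\langle D\rangle}(I|_L)$, the \L ojasiewicz exponent of $I|_L$ relative to the divisor $D:=(x_1\cdots x_n)|_L$ on $L$. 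Restricting the hypothesis $V(I)\subseteq V(x_1\cdots x_n)$ to $L$ gives $V(I|_L)\subseteq V(D)$, so every object involved is well defined and has finite \L ojasiewicz exponent on $L$.

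The heart of the matter is the computation of $\lct(D)$. On $L\cong\C^{n-k}$ the divisor $D$ equals the product $\prod_{i=1}^n (x_i|_L)$ of $n$ linear forms which, for generic $L$, are in general position, so $\{D=0\}$ is a generic central arrangement of $n$ hyperplanes in $\C^{n-k}$. For such an arrangement one has
$$
\lct(\langle D\rangle)=\lct(D)=\min\Bigl\{1,\tfrac{n-k}{n}\Bigr\}=1-\tfrac{k}{n},
$$
the binding value $1-\frac kn$ arising from blowing up the origin, where $D$ has multiplicity $n$, in a log resolution of the arrangement. I expect this to be the main obstacle: the elementary estimates $\frac{1}{\ord(D)}\le \lct(D)\le\frac{n-k}{\ord(D)}$ recalled before Theorem~\ref{lctBL} only confine $\lct(D)$ to the interval $[\frac1n,1-\frac kn]$, and the lower bound $\lct(D)\ge 1-\frac kn$ genuinely requires the genericity of the arrangement (it fails for special configurations), so this step cannot be extracted from the monotonicity properties alone.

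With $\lct(D)$ computed, I would apply the Proposition preceding the theorem to the ring $\O_{n-k}$ of germs on $L$, in the roles of its two ideals taking $\langle D\rangle$ and $I|_L$ respectively; the inclusion $V(I|_L)\subseteq V(\langle D\rangle)$ noted above is precisely its hypothesis. This gives
$$
1-\tfrac{k}{n}=\lct(\langle D\rangle)\le \LL_{\langle D\rangle}(I|_L)\,\lct(I|_L),
$$
and substituting the identifications $\lct(I|_L)=\lct^{(n-k)}(I)$ and $\LL_{\langle D\rangle}(I|_L)=\LL^{(n-k)}_{x_1\cdots x_n}(I)$ yields exactly $1-\frac kn\le \lct^{(n-k)}(I)\,\LL^{(n-k)}_{x_1\cdots x_n}(I)$. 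Equivalently one may bypass the Proposition and rerun the integral estimate from the proof of Theorem~\ref{LCTLojExp} directly on $L$: if $|D|^\theta\lesssim\|f|_L\|$ near $0$, then $\int_K\|f|_L\|^{-2s}\lesssim\int_K|D|^{-2s\theta}$, which is finite whenever $s\theta<\lct(D)=1-\frac kn$, and letting $\theta\downarrow\LL_{\langle D\rangle}(I|_L)$ gives the same inequality. Taking $k=0$ recovers Theorem~\ref{LCTLojExp}, confirming the statement is the natural generic-section generalization of it.
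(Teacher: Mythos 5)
Your proposal is correct, and at bottom it rests on the same computation as the paper's proof: everything reduces to showing that $|x_1\cdots x_n|^{-2s\theta}$ is integrable near $0$ on a generic $(n-k)$-plane $L$ exactly when $s\theta<1-\frac kn$, i.e.\ that the restricted coordinate arrangement $D=(x_1\cdots x_n)|_L$ has $\lct(\langle D\rangle)=1-\frac kn$, with the binding divisor being the exceptional divisor of the blow-up of the origin. The paper does this directly: it writes the volume form of $L$ as $\omega$ with $dx_1\wedge\cdots\wedge dx_n=dh_1\wedge\cdots\wedge dh_k\wedge\omega$, pulls back under the blow-up of $0\in\C^n$, finds $\omega=u_1^{n-k-1}\omega'$ on the strict transform $L'$, and reads off integrability from the exponent of $u_1$ --- this is precisely your ``bypass'' variant. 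Your primary route instead quotes the value of the log canonical threshold of a generic central arrangement of $n$ hyperplanes in $\C^{n-k}$ and feeds it into the inequality $\lct(I)\le\LL_I(J)\lct(J)$ applied on $L$; this is cleaner and more modular, and it correctly isolates where genericity of $L$ enters, but it shifts the real content into the arrangement computation, which you should either cite or prove (a single blow-up of the origin already gives a log resolution of a generic arrangement, since at most $n-k-1$ of the hyperplanes pass through any point of the exceptional divisor, whence $\lct=\min\{1,\frac{n-k}{n}\}$). Finally, note that both you and the paper use without full justification the identification of $\LL^{(n-k)}_{x_1\cdots x_n}(I)$ with the \L ojasiewicz exponent of $I|_L$ relative to $(x_1\cdots x_n)|_L$ on a generic $L$; you at least flag that this requires an analogue of Lemma \ref{genericvalue} for the ideal $\langle x_1\cdots x_n\rangle$ in place of $\m$, so no gap is introduced beyond what the paper itself assumes.
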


\begin{proof}
Let $L$ be a linear $(n-k)$-dimensional subspace of $\C^n$.
Assume that $I$ is generated by $f_1$, \dots, $f_m$ and
set $f=(f_1,\dots,f_m)$.
Let $H_i=\{h_i=0\}$ denote a generic hyperplane of $\C^n$ through $0$
so that $L=H_1\cap\cdots\cap H_k$.
Let $\omega$ denote an $(n-k)$-form with
$dx_1\wedge\cdots\wedge dx_n=dh_1\wedge\cdots\wedge dh_k\wedge\omega$.
Let $\pi:M\to\C^n$ denote the blow up at the origin and let
$h_i'$ denote the strict transform of $h_i$.
Set $x_1=u_1$ and $x_i=u_1u_i$ ($i=2,\dots,n$).
Since $h_i=u_1h_i'$, then
$$
dh_i=d(u_1h_i')=u_1dh_i'+h_i'du_1=u_1dh_i'
$$
on the set defined by $h_i'=0$.
Let $\omega'$ denote an $(n-k)$-form with
$du_1\wedge\cdots\wedge du_n=dh'\wedge\omega'$.
Since $L$ is generic,
the strict transform $L'$ of $L$ and the zeros of $u_i$ ($i=2,\dots,n$)
form a normal crossing variety.
Since
\begin{align*}
(u_1dh'_1)\wedge\cdots\wedge(u_1dh'_k)\wedge \omega
=&dh_1\wedge\cdots dh_k\wedge \omega\\
=&dx_1\wedge\dots\wedge dx_n\\
=&u_1^{n-1}du_1\wedge\cdots\wedge du_{n} \qquad
\textrm{ on $L'$,}
\end{align*}
we may assume that $\omega=u_1^{n-k-1}\omega'$ on $L'$.
If $|x_1\cdots x_n|^\theta\lesssim\|f\|$ on $L$,
we have
\begin{align*}
\int_{K\cap L}\|f\|^{-2s}\frac{\omega\wedge\bar{\omega}}{\sqrt{-1}^{n-k}}
\lesssim&\int_{K\cap L}|x_1\cdots x_n|^{-2\theta s}
\frac{\omega\wedge\bar{\omega}}
{\sqrt{-1}^{n-k}}\\
=&\int_{\pi^{-1}(K)\cap L'}
|u_1^{n}u_2\cdots u_n|^{-2\theta s}|u_1|^{2(n-k-1)}
\frac{\omega'\wedge\bar{\omega}'}{\sqrt{-1}^{n-k}}
\\
=&\int_{\pi^{-1}(K)\cap L'}
|u_1|^{-2(n\theta s-n+k+1)}|u_2\cdots u_n|^{-2\theta s}
\frac{\omega'\wedge\bar{\omega}'}{\sqrt{-1}^{n-k}}
\end{align*}
which is integrable whenever $n\theta s<n-k$. So we have that
$s<(1-\frac{k}n)/\mathcal L_{x_1\cdots x_n}^{(n-k)}(I)$ implies
$s<\lct^{(n-k)}(I)$,
and we have
$$
1-\frac{k}n\leq{\lct^{(n-k)}(I)}\mathcal L_{x_1\cdots x_n}^{(n-k)}(I).
$$
\end{proof}
We close the paper to show a closed formula for $\lct^{(k)}(I)$ when
$\overline I$ is monomial.

\begin{thm}\label{mixlct}
Let $I$ be an ideal of $\O_n$ such that $\overline{I}$ is a monomial
 ideal.
Then
\begin{align*}
\lct^{(k)}(I)
=&\min\Bigl\{\frac{\sum_ia_i-(n-k)a_{\min}}{\ell(a,I)}:a\in S^{(k)}\Bigr\}
\\
=&\inf\Bigl\{\frac{\sum_ia_i-(n-k)}{\ell(a, I)}
: a\in S^{(k)}\cap A\Bigr\}
\end{align*}
where $A=\{a=(a_1,\dots,a_n):\min\{a_1,\dots,a_n\}=1\}$, for all $k\in\{1,\dots, n\}$.
\end{thm}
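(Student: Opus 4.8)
The plan is to reduce to the case where $I$ itself is a monomial ideal, and then to compute the log canonical threshold of a generic $k$-dimensional section by means of a toric log resolution, reading off orders of vanishing and discrepancies exactly as in the proofs of Theorems \ref{LCTLojExp} and \ref{lct^kLojExp}. First I would pass to $\overline I$: since $\lct$ and the numbers $\ell(a,I)$ depend only on $\overline I$, and since for monomial generators $x^{m_1},\dots,x^{m_r}$ of $\overline I$ one has $\|(f_1,\dots,f_p)\|\sim\|(x^{m_1},\dots,x^{m_r})\|$ near $0$ for any generating tuple $f_1,\dots,f_p$ of $I$ (two ideals with the same integral closure have equivalent norms), this equivalence persists after restricting to any $L$, whence $\lct(I|_L)=\lct(\overline I|_L)$ and we may assume $I=\overline I$ is monomial. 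Fixing generic linear forms $h_1,\dots,h_{n-k}$ and $L=\{h_1=\cdots=h_{n-k}=0\}$, the semicontinuity used to define $\lct^{(k)}$ (and the chain \eqref{chainlct}) shows it suffices to evaluate $\lct(I|_L)$ for one sufficiently general such $L$.

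Next I would choose a regular subdivision $\Sigma$ of the dual Newton polyhedron of $\Gamma_+(I)$ which simultaneously refines the fan $\{S^{(i)}_0\}$, exactly as in the proof of Theorem \ref{Lojmonomial}, and let $\pi\colon M\to\C^n$ be the associated toric modification, with exceptional divisors $E_a$ indexed by the primitive ray generators $a$. Along $E_a$ one has $\ord_{E_a}(\pi^*I)=\ell(a,I)$ and $\ord_{E_a}(K_{M/\C^n})=\sum_i a_i-1$. Two combinatorial facts are then needed: (i) the strict transform $L'$ of a generic $L$ meets $E_a$ in a divisor of $L'$ if and only if $a\in S^{(k)}$, which follows because the center of $\ord_a$ on the blow-up of $0$ is the coordinate subspace $\mathbb P^{\#A(a)-1}$, and a generic $\mathbb P(L)=\mathbb P^{k-1}$ meets it precisely when $(k-1)+(\#A(a)-1)\ge n-1$, i.e. $\#A(a)\ge n+1-k$; and (ii) by genericity $L'$ is smooth and meets the $E_a$ as a normal crossing configuration, so $\pi|_{L'}\colon L'\to L$ is a log resolution of $I|_L$ with $\ord_{E_a\cap L'}(\pi^*(I|_L))=\ell(a,I)$.

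The technical heart, and the step I expect to be the main obstacle, is the discrepancy computation under generic restriction: for $a\in S^{(k)}$ I claim
\[
\ord_{E_a\cap L'}\bigl(K_{L'/L}\bigr)+1=\sum_i a_i-(n-k)\,a_{\min}.
\]
I would prove this by the form-transformation argument of Theorem \ref{lct^kLojExp}, now carried out in a smooth toric chart for $\pi$ in which $E_a=\{u_1=0\}$ and $\pi^*x_j=u_1^{a_j}\prod_{l\ge 2}u_l^{b_{jl}}$: the top form $dx_1\wedge\cdots\wedge dx_n$ pulls back with $\ord_{u_1}=\sum_i a_i-1$, while each generic $h_i$ satisfies $\pi^*h_i=u_1^{a_{\min}}h_i'$, so on $L'=\{h_1'=\cdots=h_{n-k}'=0\}$ the forms $dh_i$ restrict to $u_1^{a_{\min}}dh_i'$ and $dh_1\wedge\cdots\wedge dh_{n-k}$ contributes $u_1^{(n-k)a_{\min}}$. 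Writing $dx_1\wedge\cdots\wedge dx_n=dh_1\wedge\cdots\wedge dh_{n-k}\wedge\omega$ with $\omega|_L$ the volume form of $L$, this yields $\ord_{u_1}(\pi^*\omega|_{L'})=(\sum_i a_i-1)-(n-k)a_{\min}$, the asserted discrepancy; one checks that it specializes to the exponent $u_1^{\,n-k-1}$ of the single blow-up in Theorem \ref{lct^kLojExp} when $a=(1,\dots,1)$. The subtle points are that the terms $h_i'\,du_1$ drop out precisely because $h_i'$ vanishes on $L'$, and that $h_i'$ fails to vanish identically along $E_a\cap L'$ exactly when $a\in S^{(k)}$, which is what ties this estimate to the index set of fact (i).

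Finally I would assemble the integral. Near $E_a\cap L'$ one has $\|f\comp\pi\|\sim|u_1|^{\ell(a,I)}$ and $\omega\wedge\bar\omega\sim|u_1|^{2(\sum_i a_i-1-(n-k)a_{\min})}\,\omega'\wedge\bar\omega'$, so the section integral converges near that divisor iff $s\,\ell(a,I)<\sum_i a_i-(n-k)a_{\min}$. Applying the resolution formula for $\lct$ to the log resolution $\pi|_{L'}$ and taking the minimum over the contributing rays $a\in S^{(k)}$ gives
\[
\lct^{(k)}(I)=\min\Bigl\{\frac{\sum_i a_i-(n-k)a_{\min}}{\ell(a,I)}:a\in S^{(k)}\Bigr\}.
\]
Since numerator and denominator are positively homogeneous of degree one in $a$, and $\Sigma$ refines both the Newton fan and $\{S^{(i)}_0\}$ (so that $a_{\min}$ and $\ell(\cdot,I)$ are linear on each cone), the minimum over the cone $S^{(k)}$ equals the minimum over the finitely many rays and is attained; normalizing $a_{\min}=1$, i.e. restricting to $A$, rewrites it as the stated infimum $\inf\{(\sum_i a_i-(n-k))/\ell(a,I):a\in S^{(k)}\cap A\}$, completing the proof.
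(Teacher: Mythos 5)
Your proposal is correct and follows essentially the same route as the paper: reduce to $I$ monomial, take a toric modification refining both the Newton fan and $\{S^{(i)}_0\}$, pull back the generic linear forms $h_i$ (each acquiring order $a_{\min}$ along $E_a$) and the volume form, and read off the discrepancy $\sum_i a_i-(n-k)a_{\min}-1$ of $K_{L'/L}$ along $E_a\cap L'$ for exactly those rays $a\in S^{(k)}$ whose divisor meets the strict transform of $L$. Your treatment is somewhat more explicit than the paper's on the reduction to $\overline I$ and on which exceptional divisors actually contribute, but the key computation is identical.
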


\begin{proof}
We may assume that $I$ is a monomial ideal.
We consider a toric modification $\sigma:X\to\C^n$
which dominate the blowing up at the origin.
There is a coordinate system $(y_1,\dots,y_n)$ so that $\sigma$ is
 expressed by
$$
x_i=y_1^{a^1_i}\cdots y_n^{a^n_i} \qquad(a^j_i\in\Z, \ i=1,\dots,n).
$$
Then we have $h_i=y_1^{a^1_{\min}}\cdots y_n^{a^n_{\min}}\tilde{h}_i$
where $\tilde{h}_i$ denotes the strict transform of $h_i$ by $\sigma$.
So we have
$$
dh_i=y_1^{a^1_{\min}}\cdots y_n^{a^n_{\min}} d\tilde{h}_i
$$
on the set defined by $\tilde{h}_i=0$.
Since
\begin{align*}
\Bigl(
\wedge_{i=1}^k(y_1^{a^1_{\min}}\cdots
 y_n^{a^n_{\min}}d\tilde{h}_i)\Bigr)\wedge
\omega
=&dh_1\wedge\cdots\wedge dh_k\wedge\omega\\
=&dx_1\wedge\cdots\wedge dx_n\\
=&y_1^{\sum_ia^1_i-1}\cdots y_n^{\sum_ia^n_i-1}dy_1\wedge\cdots\wedge dy_n
\end{align*}
we obtain that
$$
\omega=
y_1^{\sum_ia^1_i-ka^1_{\min}-1}
\cdots y_n^{\sum_ia^n_i-ka^n_{\min}-1}\tilde{\omega}
$$
where $\tilde{\omega}$ is a holomorphic $(n-k)$-form
which does not vanish on the strict transform
$\tilde{L}$ of $L$ by $\sigma$ with
$$
dy_1\wedge\cdots\wedge dy_n=
d\tilde{h}_1\wedge\cdots\wedge d\tilde{h}_k\wedge\tilde{\omega}.
$$
Since $L$ is generic, $\tilde{L}$ and the zeros of $y_j$ form a
 normal crossing variety and we conclude that
$$
\lct^{(n-k)}(I)
=
\min\Bigl\{
\frac{\sum_i a_i-ka_{\min}}{\ell(a, I)}:a\in S^{(n-k)}
\Bigr\}.
$$
We complete the proof by replacing $k$ by $n-k$.
\end{proof}



\begin{thebibliography}{AA}

\bibitem{Atiyah}
M.\,F.\,Atiyah,
Resolution of singularities and division of distributions,
Comm. Pure Appl. Math. {\bf 23} (1970), 145--150.

\bibitem{BFN}
L.\,Birbrair, A.\, Fernandes and W.\,D.\,Neumann,
Bi-Lipschitz geometry of weighted homogeneous surface
singularities. Math. Ann. 342 (2008), no. 1, 139--144.

\bibitem{BCFR}
L.\,Birbrair, J.\,C.\,F.\,Costa, A.\,Fernandes and M.\,A.\,S.\,Ruas,
$\mathcal K$-bi-Lipschitz equivalence of real function-germs.
Proc.\,Amer.\,Math.\,Soc.\, 135 (2007), no. 4, 1089--1095.

\bibitem{BiviaMZ2}
C.\,Bivi\`a-Ausina,
Local \L ojasiewicz exponents, Milnor numbers and mixed multiplicities of ideal,
Math.\, Z.\, {\bf 262} (2009), no. 2, 389-409.

\bibitem{BiviaMRL}
C.\,Bivi\`a-Ausina,
Joint reductions of monomial ideals and multiplicity of complex analytic maps,
Math. Res. Lett. {\bf 15} (2008), no. 2, 389--407.

\bibitem{BE2012}
C.\,Bivi\`a-Ausina and S.\,Encinas,
{\L}ojasiewicz exponent of families of ideals, Rees mixed multiplicities and Newton filtrations,
Rev. Math. Complut. {\bf 26} (2013), no. 2, 773--798.

\bibitem{BE}
C.\,Bivi\`a-Ausina and S.\,Encinas,
\L ojasiewicz exponents and resolution of singularities,
Arch.\, Math.\, {\bf 93} (2009), no. 3, 225-234.

\bibitem{BFS}
C.\,Bivi\`a-Ausina, T.\,Fukui \and M.\,J.\,Saia,
Newton graded algebras and the codimension of non-degenerate ideals,
Math. Proc. Cambridge Philos. Soc. {\bf 133} (2002), 55--75.

\bibitem{BS}
J.\,Brian\c con \and J.\,P.\,Speder,
La trivialit\'e topologique n'implique pas les conditions de Whitney,
C. R. Acad. Sc. Paris {\bf 280} (1975), 365--367.

\bibitem{BS2}
J.\,Brian\c con \and J.\,P.\,Speder,
Les conditions de Whitney impliquent $\mu^*$ constant,
Ann. Inst. Fourier {\bf 26} (1976), no. 2, 153--163.



\bibitem{Singular}
W.\,Decker, G.-M.\,Greuel, G.\,Pfister and H.\,Sch\"onemann,
{\sc Singular} {3-1-3} --- {A} computer algebra system for polynomial computations,
http://www.singular.uni-kl.de (2011).

\bibitem{dFEM1}
T.\,de Fernex, L.\,Ein and M.\,Musta\c t\u a,
Multiplicities and log canonical threshold,
J.\,Algebraic Geom.\, {\bf 13} (2004), no. 3, 603--615.


\bibitem{Eyral}
C.\,Eyral,
Zariski's multiplicity question -- a survey,
New Zealand J. Math. 36 (2007), 253--276.


\bibitem{FR}
A.\,C.\,G.\,Fernandes and M.\,A.\,S.\,Ruas,
Bi-Lipschitz determinacy of quasihomogeneous germs,
Glasg.\,Math.\,J.\,{\bf 46} (2004), no. 1, 77–82.

\bibitem{FT}
M.\,Furuya and M.\,Tomari,
A characterization of semi-quasi-homogeneous functions in terms of the Milnor number,
Proc. Amer. Math. Soc. {\bf 132} (2004), no. 7, 1885--1890.

\bibitem{Gaffney}
T.\,Gaffney,
Bi-Lipschitz equivalence, integral closure and invariants,
Real and complex singularities, 125--137,
London Math.\,Soc.\,Lecture Note Ser., 380, Cambridge Univ.\, Press,
Cambridge, 2010.

\bibitem{Greuel}
G.\,M.\,Greuel,
Constant Milnor number implies constant multiplicity for quasihomogeneous singularities,
Manuscripta Math. {\bf 56} (1986), 159--166.

\bibitem{HP1}
J.-P.\,Henry and A.\,Parusi\'nski,
Existence of moduli for bi-Lipschitz equivalence of analytic functions,
Compositio Math.\, 136 (2003), no. 2, 217--235.

\bibitem{HP2}
J.-P.\,Henry and A.\,Parusi\'nski,
Invariants of bi-Lipschitz equivalence of real analytic functions,
Geometric singularity theory, 67--75,
Banach Center Publ., 65, Polish Acad. Sci., Warsaw, 2004.

\bibitem{HIO} M.\,Herrmann, S.\,Ikeda and U.\,Orbanz,
Equimultiplicity anb Blowing Up. An algebraic study with an appendix by B. Moonen,
Springer-Verlag (1988).

\bibitem{Hickel}
M.\,Hickel,
Fonction asymptotique de Samuel des sections hyperplanes et
multiplicit\'e,
J.\, Pure Appl.\, Algebra {\bf 214} (2010), no. 5, 634-645.

\bibitem{Howald}
J.\,A.\,Howald,
Multiplier ideals of monomial ideals,
Trans. Amer. Math. Soc. {\bf 353} (2001), no. 7, 2665–2671.

\bibitem{HS}
C.\,Huneke and I.\,Swanson,
Integral Closure of Ideals, Rings, and Modules,
London Math. Soc. Lecture Note Series {\bf 336}, 2006, Cambridge
University Press.

\bibitem{JV}
D.\,Juniati and G.\,Valette,
Bi-Lipschitz trivial quasi-homogeneous stratifications,
Saitama Math. J. 26 (2010), 1--13

\bibitem{KP}
S.\,Koike and L.\,Paunescu,
The directional dimension of subanalytic sets is invariant under bi-Lipschitz
homeomorphisms,
Ann. Inst. Fourier (Grenoble) 59 (2009), no. 6, 2445--2467.


\bibitem{Koike}
S.\,Koike,
The Brian\c con-Speder and Oka families are not biLipschitz trivial,
Several Topics in Singularity Theory, RIMS Kokyuroku 1328 (2003), 165--173.


\bibitem{GIT}
S.\,Krantz and H.\,Parks,
Geometric Integration Theory, Birkhäuser, 2008.

\bibitem{KOP}
T.\,Krasi\'nski, G.\,Oleksik and A.\,P\l oski,
The \L ojasiewicz exponent of an isolated weighted homogeneous surface singularity,
Proc. Amer. Math. Soc. {\bf 137} (2009), no. 10, 3387--3397.

\bibitem{Laz}
R.\,Lazarsfeld,
Positivity in algebraic geometry II,
Ergebnisse der Mathematik und ihrer Grenzgebiete,
Vol. 49, Springer Verlag, Berlin, 2004.

\bibitem{LT}
M.\,Lejeune \and B.\,Teissier,
Cl\^oture int\'egrale des id\'eaux et equisingularit\'e,
with an appendix by J.\,J.\,Risler.
Centre de Math\'ematiques, Ecole Polytechnique (1974)
and Ann.\,Fac.\,Sci.\,Toulouse Math. (6) {\bf 17} (2008), no. 4, 781--859.

\bibitem{Mather}
J.\,Mather, Stability of $C^\infty$-map-germs III. Finitely determined map-germs,
Inst. Hautes Études Sci. Publ. Math.\,{\bf 35} (1968), 127--156.

\bibitem{Milnor}
J.\,Milnor,
Singular points of complex hypersurfaces,
Annals of Mathematics Studies, {\bf 61}
Princeton University Press,
Princeton, N.J.; University of Tokyo Press, Tokyo 1968

\bibitem{Mustata2002}
M.\,Musta\c{t}\u a,
Singularities of pairs via jet schemes,
J.\,Amer.\,Math.\,Soc. {\bf 15} (2002), 599--615.

\bibitem{MustataIMPANGA}
M.\,Musta\c{t}\u a,
IMPANGA lecture notes on log canonical threshold, ArXiv:1107.2676 (2011).

\bibitem{Mostowski}
T.\,Mostowski, Lipschitz equisingularity,
Dissertationes Math.\, 243 (1985),


\bibitem{OShea}
D.\,O'Shea,
Topologically trivial deformations of isolated quasihomogeneous hypersurface singularities are equimultiple,
Proc. Amer. Math. Soc. {\bf 101} (1987), no. 2, 260--262.

\bibitem{Parusinski}
A.\,Parusi\'nski,
Lipschitz stratification,
Global analysis in modern mathematics (Orono, ME, 1991; Waltham, MA,1992),
73--89, Publish or Perish, Houston, TX, 1993.

\bibitem{Ploski}
A.\,P\l oski,
Multiplicity and the \L ojasiewicz exponent,
Singularities (Warsaw, 1985), 353--364,
Banach Center Publ., 20, PWN, Warsaw, 1988.

\bibitem{Ploski2}
A.\,P\l oski,
Semicontinuity of the \L ojasiewicz exponent,
Univ. Iagel. Acta Math. {\bf 48} (2010), 103--110.


\bibitem{Reesllibre}
D. Rees,
Lectures on the asymptotic theory of ideals,
London Math. Soc. Lecture Note Series 113 (1988), Cambridge
University Press.

\bibitem{Rees2}
D.\,Rees,
Generalizations of reductions and mixed multiplicities,
J. London Math. Soc. (2) {\bf  29} (1984), 397--414.


\bibitem{RS}
D.\,Rees and J.\,Sally,
General elements and joint reductions,
Mich. Math. J. {\bf  35} (1988), no. 2, 241--254.

\bibitem{RT}
J.-J.\,Risler and D.\,Trotman,
Bi-Lipschitz invariance of the multiplicity,
Bull. London Math. Soc. {\bf 29} (1997), no. 2, 200--204.

\bibitem{RV}
M.\,A.\,S.\,Ruas and G.\,Valette,
$C^0$ and bi-Lipschitz $\mathcal K$-equivalence of mappings,
Math.\, Z.\, 269 (2011), no. 1-2, 293--308

\bibitem{Swanson}
I.\,Swanson,
Mixed multiplicities, joint reductions and quasi-unmixed local rings,
J.\,London Math.\,Soc.\, (2) {\bf  48} (1993), no. 1, 1--14.

\bibitem{Teissier77}
B.\,Teissier,
Vari\'et\'es polaires. I. Invariants polaires des singularit\'es d'hypersurfaces,
Invent. Math. 40 (1977), 267--292.

\bibitem{Cargese}
B.\,Teissier,
Cycles \'evanescents, sections planes et conditions of Whitney,
Singularit\'es \`a Carg\`ese, Ast\'erisque, no. 7--8 (1973), 285--362.

\bibitem{V}
G.\,Valette,
Bi-Lipschitz sufficiency of jets,
J.\,Geom.\,Anal.\, 19 (2009), no. 4, 963--993.

\bibitem{Var}
A.\,Varchenko,
A lower bound for the codimension of the stratum $\mu=$const by mixed Hodge structures,
Vestnik MGU, Ser. Math. 6 (1982), 28-31.

\bibitem{Wall}
C.\,T.\,C.\,Wall,
Finite determinacy of smooth map-germs,
Bull.\,London Math.\, Soc.\,13 (1981), 481--539.

\bibitem{Zariski}
O.\,Zariski,
Some open questions in the theory of singularities,
Bull.\,Amer.\,Math.\,Soc.\,{\bf 77} (1971), 481--491.

\end{thebibliography}
\end{document}